\documentclass[review,hidelinks,onefignum,onetabnum]{siamart250211}

\usepackage{bm}
\pretocmd{\maketitle}{\nolinenumbers}{}{}
\usepackage{makecell}
\usepackage{algorithmic}

\usepackage{lipsum}
\usepackage{amsfonts}
\usepackage{graphicx}
\usepackage{epstopdf}
\usepackage{algorithmic}
\ifpdf
  \DeclareGraphicsExtensions{.eps,.pdf,.png,.jpg}
\else
  \DeclareGraphicsExtensions{.eps}
\fi

\newsiamremark{remark}{Remark}
\newsiamremark{hypothesis}{Hypothesis}
\crefname{hypothesis}{Hypothesis}{Hypotheses}
\newsiamthm{claim}{Claim}
\newsiamremark{fact}{Fact}
\crefname{fact}{Fact}{Facts}

\headers{Adaptive first-order  gradient methods}{X. Hu, S. Pollock, Z. Xue, and Y. Zhu}

\title{An adaptive framework for \\ first-order gradient methods\thanks{Submitted to the editors DATE.
\funding{Author SP acknowledges partial support from NSF grant DMS 2045059 (CAREER). The work of XH is partially supported by the National Science Foundation under grant DMS-2513394.}}}

\author{Xiaozhe Hu \thanks{Department of Mathematics, Tufts University, Medford, MA 02155, USA
   (\email{Xiaozhe.Hu@tufts.edu},
   \email{Zhongqin.Xue@tufts.edu}).}
\and Sara Pollock\thanks{Department of Mathematics, University of Florida, Gainesville, FL 32611, USA 
    (\email{s.pollock@ufl.edu}).}
  \and Zhongqin Xue\footnotemark[2]
  \and Yunrong Zhu\thanks{Department of Mathematics and Statistics, Idaho State University, Pocatello, ID 83209, USA 
    (\email{zhuyunr@isu.edu}).}
}

\usepackage{amsopn}

\ifpdf
\hypersetup{
  pdftitle={Adaptive framework for gradient methods},
  pdfauthor={X. Hu, S. Pollock, Z. Xue, and Y. Zhu}
}
\fi

\begin{document}

\maketitle

\begin{abstract}
Gradient methods are widely used in optimization problems. In practice, while the smoothness parameter can be estimated utilizing techniques such as backtracking, estimating the strong convexity parameter remains a challenge; moreover, even with the optimal parameter choice, convergence can be slow. In this work, we propose a framework for dynamically adapting the step size and momentum parameters in first-order gradient methods for the optimization problem, without prior knowledge of the strong convexity parameter. The main idea is to use the geometric average of the ratios of successive residual norms as an empirical estimate of the upper bound on the convergence rate, which in turn allows us to adaptively update the algorithm parameters. The resulting algorithms are simple to implement, yet efficient in practice, requiring only a few additional computations on existing information. The proposed adaptive gradient methods are shown to converge at least as fast as gradient descent for quadratic optimization problems. Numerical experiments on both quadratic and nonlinear problems validate the effectiveness of the proposed adaptive algorithms. The results show that the adaptive algorithms are comparable to their counterparts using optimal parameters, and in some cases, they capture local information and exhibit improved performance.
\end{abstract}

\begin{keywords}
gradient methods, strongly convex, quadratic optimization, adaptive algorithm 
\end{keywords}

\begin{MSCcodes}
65K05, 65K10, 68Q25, 90C25
\end{MSCcodes}

\section{Introduction}
Many problems in machine learning \cite{chong2023introduction,sun2019survey,bottou2018optimization}, signal processing \cite{mattingley2010real,palomar2010convex}, and operations research \cite{meignan2015review,rardin1998optimization}  are formulated as optimization problems: 
\begin{align*}
    \min_{\bm{x} \in \mathbb{R}^n} f(\bm{x}),
\end{align*}
where we assume that $f:\mathbb{R}^n \to \mathbb{R}$ is a differentiable function. The function $f$ is said to be $L$-smooth if its gradient is Lipschitz continuous with constant $L>0$, that is, 
\begin{align*}
    \|\nabla f(\bm{x}) - \nabla f(\bm{y})\| \leq L \|\bm{x} - \bm{y}\|, \quad \forall \, \bm{x}, \bm{y} \in \mathbb{R}^n.
\end{align*}
The function $f$ is convex if it satisfies
\begin{align*}
  f(\bm{y}) \ge f(\bm{x}) + \langle \nabla f(\bm{x}), \bm{y} - \bm{x} \rangle, \quad \forall \, \bm{x}, \bm{y} \in \mathbb{R}^n.
\end{align*}
Furthermore, $f$ is said to be $\mu$-strongly convex with parameter $\mu > 0$ if
\begin{align*}
  f(\bm{y}) \geq f(\bm{x}) + \langle \nabla f(\bm{x}), \bm{y} - \bm{x} \rangle + \frac{\mu}{2} \|\bm{y} - \bm{x}\|^2, \quad \forall \, \bm{x}, \bm{y} \in \mathbb{R}^n.
\end{align*}
One of the most  commonly used methods for solving such problems is the gradient descent (GD) algorithm. When $f$ is convex and $L$-smooth, GD achieves a sublinear convergence, which varies with the smoothness of the gradient. For $\mu$-strongly convex problems, the convergence rate of GD improves to linear. However, GD often exhibits slow convergence for ill-conditioned problems. To accelerate convergence, first-order momentum methods were introduced. The Nesterov Accelerated Gradient (NAG)  utilizes a ``look-ahead" gradient by evaluating $\nabla f$ at an extrapolated point  achieving an accelerated linear convergence rate compared to GD. By incorporating momentum directly, the heavy-ball (HB) method achieves acceleration for iterations near the minimum of the strongly convex quadratic objective; however, for general strongly convex problems, it may fail to converge \cite{goujaud2025provable,lessard2016analysis}. 

However, in practice, smoothness and strong convexity constants are typically unavailable or costly to obtain in a timely manner during the optimization process. Consequently, much effort has been devoted to designing algorithms that are robust under estimated or unknown parameters. For algorithms with estimated parameters, however, poor estimations may result in slow convergence or even divergence.  Backtracking line search methods, originating from the early work of Goldstein \cite{goldstein1962cauchy} and Armijo \cite{armijo1966minimization}, 
find a step size along the search direction that ensures sufficient decrease of the objective at each iteration. 
Nesterov \cite{nesterov1983} adapts the backtracking idea to estimate the smoothness constant in constructing the accelerated gradient method. On the other hand, restart schemes provide a simple adaptive mechanism that does not require prior knowledge of the problem parameters. Their performance is primarily driven by the choice of restart frequency and the underlying regularity properties. Nesterov \cite{nesterov2013gradient} designed a restart strategy with linear convergence in the strongly convex setting. Based on the observation that accelerated methods display periodic oscillations when momentum is high, \cite{o2015adaptive} restarts the algorithm when periodic oscillations are detected.
There are also some works that investigate restart schemes under the generic H\"olderian error bound setting, see \cite{d2021acceleration,ito2021nearly,li2018calculus,roulet2017sharpness}  for details. Recently, various adaptive strategies are applied to select parameters. Malitsky and Mishchenko \cite{malitsky2019adaptive} develop an adaptive GD algorithm that estimates local smoothness from gradient information. Utilizing primal–dual relations to estimate a local strong convexity constant, they further propose an adaptive accelerated gradient descent algorithm. The authors later extend their approach to construct an adaptive proximal gradient method \cite{malitsky2024adaptive}. 
Recently, the authors in \cite{suh2025adaptive} develop an adaptive NAG that estimates the local smoothness constant via the quadratic upper bound for the $L$-smooth objective, achieving an $\mathcal{O}(1/k^2)$ convergence rate.

In this work, we illustrate our approach using an unconstrained quadratic optimization problem:
\begin{align}\label{quadra_problem}
    \min _{\bm{x} \in \mathbb{R}^n} f(\bm{x}) = \frac{1}{2} \bm{x}^TA\bm{x} - \bm{b}^T\bm{x},
\end{align}
where $A \in \mathbb{R}^{n \times n}$ is a symmetric positive definite (SPD) matrix and vector $\bm{b}\in \mathbb{R}^n$ is given. 
The function $f(\bm{x})$ is strongly convex, smooth and has a unique minimizer $\bm{x}^*$ characterized by $ \nabla f(\bm{x}^*) = 0 $, i.e., $A\bm{x}^* = \bm{b}$.  Suppose the eigenvalues of $A$ satisfy $0<\lambda_1 \leq \cdots \leq \lambda_n$. 
Then the strong convexity and smoothness parameters are given by $\mu = \lambda_1$ and $L = \lambda_n$, respectively, and the condition number is defined as $\kappa := \frac{L}{\mu}$. As discussed above, the smoothness parameter can be estimated via backtracking.  Consequently, the main interest of this work is providing a unified framework for developing adaptive methods that do not require prior knowledge of the strong convexity constant $\mu$. To this end, we introduce an adaptive update rule motivated by \cite{APZ24} that uses the geometric average of the ratios of successive residual norms as an empirical estimate of the upper bounds on convergence rates for the first-order gradient methods, which then guides the parameter updates. We prove that the adaptive gradient methods converge at least as fast as GD with a step size of $1/L$.
Numerical results on quadratic problems with diagonal Hessians validate the effectiveness of the proposed adaptive algorithms.  
We observed that the convergence rate estimate computed from the geometric average of the ratios of successive residual norms asymptotically approaches the optimal convergence rate.
Furthermore, the proposed methods, capturing local curvature
information, also show good performance on nonlinear  problems. 

The remainder of this article is organized as follows. In Section \ref{adaptive_ag}, we briefly summarize relevant gradient methods and present and analyze the proposed adaptive algorithms. Section \ref{numeri_test} reports our numerical results, evaluating the adaptive methods on both quadratic and nonlinear problems. Finally, Section \ref{conclusion} provides concluding remarks and outlines directions for future work.




\section{Adaptive Algorithms}\label{adaptive_ag}
In this section, we present  adaptive algorithms for solving the quadratic optimization problem \eqref{quadra_problem} and investigate their convergence behavior in detail.

To facilitate the discussion, we introduce some standard notation. We use $\rho^*$ as a generic notation for the upper bound on the convergence rate of the gradient methods. The parameters $\alpha$ and $\beta$ denote the step size and the momentum parameter, respectively. 
When referring to methods, the corresponding bounds will be denoted by appropriate subscripts. 
We start with a brief review of three classical first-order algorithms: GD, NAG and HB for quadratic optimization problems in Table \ref{tab:classic_methods}.  

\begin{table}[htbp]
\centering
\caption{Classical first-order gradient methods for quadratic optimization problems}
\begin{tabular}{c|c|c|c}
\hline
Method& Update Rule & Optimal Parameters&  Rate Bound \\
\hline
GD & 
$ \bm{x}_{k+1} = \bm{x}_k - \alpha \nabla f(\bm{x}_k)$ & 
$ \alpha_{\texttt{GD}}^* = \frac{2}{L+\mu} $ & 
$ \rho_{\texttt{GD}}^*= \frac{L - \mu}{L + \mu} $ \\
\hline
NAG & 
\makecell{
$\bm{y}_{k}=\bm{x}_{k}+\beta\left(\bm{x}_{k}-\bm{x}_{k-1}\right) $ \\ 
$ \bm{x}_{k+1}=\bm{y}_k-\alpha \nabla f\left(\bm{y}_k\right) $
} & 
\makecell{
$ \alpha_{\texttt{NAG}}^* = \frac{1}{L} $, \\ 
$ \beta_{\texttt{NAG}}^* = \frac{\sqrt{L} - \sqrt{\mu}}{\sqrt{L} + \sqrt{\mu}}$
} & 
$ \rho_{\texttt{NAG}}^*= 1 - \frac{\sqrt{\mu}}{\sqrt{L}} $ \\
\hline
HB & 
\makecell{
$ \bm{x}_{k+1}=\bm{x}_k-\alpha \nabla f\left(\bm{x}_k\right)$\\+$\beta\left(\bm{x}_k-\bm{x}_{k-1}\right)$
} & 
\makecell{
$\alpha_{\texttt{HB}}^* = \frac{4}{(\sqrt{L} + \sqrt{\mu})^2} $\\ $\beta_{\texttt{HB}}^* = \left(\frac{\sqrt{L} - \sqrt{\mu}}{\sqrt{L} + \sqrt{\mu}}\right)^2$
} & 
$ \rho_{\texttt{HB}}^*=\frac{\sqrt{L}-\sqrt{\mu}}{\sqrt{L}+\sqrt{\mu}} $ \\
\hline
\end{tabular}
\label{tab:classic_methods}
\end{table}

Hereafter, we assume, without loss of generality, that $L\leq1$. This can be achieved by applying a proper rescaling of $A$. Specifically, the case $L=1$ corresponds to knowing the smoothness constant of the objective in the rescaling step. 
The key idea of the algorithm is to adaptively update the step size $\alpha$ and momentum parameter $\beta$ based on the information extracted from the observed iterates. In particular, estimates of the convergence rate $\rho^*$ are used to update the parameters based on their relationships outlined in Table~\ref{tab:classic_methods}. We begin by applying this principle to GD and derive an adaptive variant in the next subsection.


\subsection{Adaptive Gradient Descent}

Define the residual at the $k$-th iteration as $\bm{r}_k := \bm{b} - A \bm{x}_k = A(\bm{x}^* - \bm{x}_k)$. For GD, the residual sequence satisfies the following recurrence
\begin{align*}
    \bm{r}_{k} = (I - \alpha A) \bm{r}_{k-1},\quad k\geq 1.
\end{align*}
Applying the $\ell
^2$ norm on both sides, noting that $I-\alpha A $ is SPD, one arrives at
\begin{align*}
    \|\bm{r}_{k}\| \leq \rho(I-\alpha A)\|\bm{r}_{k-1}\|,
\end{align*}
where $\rho(M)$ denotes the spectral radius of a matrix $M$. When $\alpha=\alpha_{\texttt{GD}}^*$, we obtain $\rho(I-\alpha A)=\rho_{\texttt{GD}}^* = \frac{L-\mu}{L+\mu}$. Therefore, we can also rewrite it as
\begin{align}\label{mu_rho_GD}
    \mu = L\frac{1-\rho_{\texttt{GD}}^*}{1+\rho_{\texttt{GD}}^*}.
\end{align}
In our adaptive scheme, we simply approximate $L$ as $1$. This is justified because estimating the largest eigenvalue is generally computationally inexpensive, allowing us to rescale the system such that $L\approx 1$.  The key idea of our proposed adaptive framework is to construct a sequence $\{\rho_k\}$ that serves as an empirical estimate of $\rho_{\texttt{GD}}^{*}$ at each iteration. 
Then, \eqref{mu_rho_GD} suggests that we could approximate $\mu$ at each iteration by
$\mu_{k} = \frac{1 - \rho_{k}}{1 + \rho_{k}}$, which yields an adaptive step size:
\begin{align*}
\alpha_{\texttt{GD}}^* = \frac{2}{L+\mu}  \Longrightarrow   \alpha_{k+1} = \frac{2}{1 + \mu_{k}}=1+\rho_k.
\end{align*}
The update rule for the adaptive GD is then given by $$\bm{x}_{k+1} = \bm{x}_{k} - \alpha_{k+1} \nabla f(\bm{x}_{k}).$$
It is summarized in Algorithm \ref{alg:AGD}.

\begin{algorithm}[h!]
\caption{Adaptive Gradient Descent }
\begin{algorithmic}[1]\label{alg:AGD}
\STATE \textbf{Input:} SPD matrix $A \in \mathbb{R}^{n \times n}$, vector $\bm{b} \in \mathbb{R}^n$, initial point $\bm{x}_0 \in \mathbb{R}^n$
\STATE \textbf{Initialize:} Compute $\bm{r}_0 = -\nabla f(\bm{x}_0)$, set $\alpha_1 = 1$
\FOR{$k = 1, 2, \dots$}
    \STATE $\bm{x}_{k} = \bm{x}_{k-1} + \alpha_k \bm{r}_{k-1}$
    \STATE $\bm{r}_{k} = - \nabla f(\bm{x}_{k}) $
    \STATE $\rho_k \approx \rho_{\texttt{GD}}^*,\text{ where } \rho_k \text{ is an estimate of } \rho_{\texttt{GD}}^*$
    \STATE $\alpha_{k+1} = 1+\rho_k$
\ENDFOR
\end{algorithmic}
\end{algorithm}

First, we consider the case where the estimate $\rho_k$ is obtained by computing the spectral radius of $I - \alpha_k A$. Note that this case is primarily of theoretical interest, as it is computationally expensive to implement in practice.
\begin{theorem} \label{thm:GD-rhok}
If $L < 1$, let $\rho_k = \rho(I - \alpha_k A)$ in \Cref{alg:AGD}. Then $\rho_k$ satisfies
\begin{align*}
    \lim\limits_{k \to \infty} \rho_k = \frac{1-\mu}{1+\mu}.
\end{align*}
\end{theorem}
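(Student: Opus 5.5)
This reduces to a one-dimensional fixed-point iteration. The eigenvalues of $I-\alpha A$ are $1-\alpha\lambda_i$, so for $\alpha>0$
\begin{align*}
\rho(I-\alpha A)=\max\{|1-\alpha\mu|,\,|1-\alpha L|\}=\max\{1-\alpha\mu,\ \alpha L-1\}.
\end{align*}
With $\alpha_{k+1}=1+\rho_k$ and $\rho_k=\rho(I-\alpha_kA)$, the sequence obeys $\alpha_{k+1}=g(\alpha_k)$, where
\begin{align*}
g(\alpha):=1+\max\{1-\alpha\mu,\ \alpha L-1\},
\end{align*}
and $\rho_k=g(\alpha_k)-1$. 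Everything hinges on the first branch being the active one, which will be forced by $L<1$.

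\textbf{Step 1 (invariant interval).} I will show by induction that $\alpha_k\in[1,2/(1+L)]$. Note that $1-\alpha\mu\ge \alpha L-1$ exactly when $\alpha\le 2/(\mu+L)$, and $2/(1+L)\le 2/(\mu+L)$ because $\mu\le 1$. So on this interval
\begin{align*}
g(\alpha)=2-\alpha\mu .
\end{align*}
For $\alpha\in[1,2/(1+L)]$ we get $g(\alpha)\le 2-\mu$ and $g(\alpha)\ge 2-2\mu/(1+L)$. This gives $g(\alpha)\ge 1$ since $2\mu\le 1+L$.

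For the upper bound I need $g(\alpha)\le 2/(1+L)$, i.e.\ $2-\alpha\mu\le 2/(1+L)$. This can fail when $\mu$ is small. Because the lower interval bound is not automatically preserved, I will instead use the interval $[1,\,2/(\mu+L)]$, on which $g(\alpha)=2-\alpha\mu$ still holds by the crossover computation above.

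For $\alpha\in[1,2/(\mu+L)]$ we have $g(\alpha)\le 2-\mu$. The required bound $2-\mu\le 2/(\mu+L)$ is equivalent to $(2-\mu)(\mu+L)\le 2$. Since $L<1$ and $\mu\le L$,
\begin{align*}
(2-\mu)(\mu+L)<(2-\mu)(1+\mu)=2+\mu-\mu^2\le 2+\mu(1-\mu).
\end{align*}
This does not quite give $\le 2$, so I will treat it more carefully. Using $L\le 1-\epsilon$, the expression is $2\mu+2L-\mu^2-\mu L$, and I must check that it is at most $2$. Write $h(\mu)=2\mu+2L-\mu^2-\mu L$. It is increasing in $\mu$ on $[0,L]$, since $h'(\mu)=2-2\mu-L>0$ whenever $\mu\le L<2/3$. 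It is maximal at $\mu=L$, where $h(L)=4L-2L^2\le 2$ exactly because $(1-L)^2\ge 0$.

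For $L$ closer to $1$ the case needs separate handling, and this invariance is the main obstacle. If invariance fails at some step, the fallback is to observe that one application of the other branch pushes $\alpha$ back into the region $\alpha\le 2/(\mu+L)$. Equivalently, I would show that $g$ maps $[1,\infty)$ into $[1,\,2-\mu]$ eventually: once $\alpha\le 2-\mu$, either branch stays bounded.

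\textbf{Step 2 (contraction).} On the region where $g(\alpha)=2-\alpha\mu$, the map $g$ is affine with slope $-\mu$, and $|-\mu|<1$. Its unique fixed point is
\begin{align*}
\alpha^*=\frac{2}{1+\mu}.
\end{align*}
This point lies in the region because $2/(1+\mu)\le 2/(\mu+L)$, using $L<1$ and $\mu\le L$. Hence $|\alpha_{k+1}-\alpha^*|=\mu|\alpha_k-\alpha^*|$, and so $\alpha_k\to 2/(1+\mu)$.

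\textbf{Step 3 (conclusion).} Taking limits in $\rho_k=g(\alpha_k)-1=1-\alpha_k\mu$ gives
\begin{align*}
\rho_k\to 1-\frac{2\mu}{1+\mu}=\frac{1-\mu}{1+\mu}.
\end{align*}
Alternatively, $g$ is continuous everywhere, so $\rho_k=g(\alpha_k)-1\to g(\alpha^*)-1$ and the result follows without relying on the branch formula at the limit.

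The strict inequality $L<1$ is what guarantees $2/(1+\mu)<2/(\mu+L)$. This keeps the fixed point strictly inside the branch-one region, so a neighbourhood of it is eventually invariant. Even if the early iterates leave that region, the global map $g$ is a contraction toward $\alpha^*$: its slope is $-\mu$ or $L$, both less than $1$ in absolute value. This Lipschitz bound alone yields convergence and sidesteps the invariance issue, so I would use it as the main argument.
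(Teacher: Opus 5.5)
Your closing argument is correct and takes a different route from the paper. Write $\alpha_{k+1}=g(\alpha_k)$ with $g(\alpha)=\max\{2-\mu\alpha,\,L\alpha\}$. The inequality $|\max\{a,b\}-\max\{c,d\}|\le\max\{|a-c|,|b-d|\}$ then gives $|g(\alpha)-g(\beta)|\le\max\{\mu,L\}\,|\alpha-\beta|=L|\alpha-\beta|$. So $g$ is a contraction on $\mathbb{R}$ exactly because $L<1$. Since $L\alpha^*<2-\mu\alpha^*$ at $\alpha^*=2/(1+\mu)$, this point is the unique fixed point of the full map. You also get an explicit rate: $|\rho_k-\frac{1-\mu}{1+\mu}|=|\alpha_{k+1}-\alpha^*|\le L^k\frac{1-\mu}{1+\mu}$.

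The paper instead tracks $\rho_k$ through a case split. Your Step~1 condition $(2-\mu)(\mu+L)\le 2$ is precisely its Case~1, $L\le\frac{2}{2-\mu}-\mu$, where the first branch is invariant and the error contracts by the factor $\mu$. In Case~2 the paper shows $\rho_k$ first decreases along the second branch, $\rho_{k+1}=L\rho_k+L-1$, until it drops below $\frac{2}{L+\mu}-1$. It then checks two subcases by hand to show the iterates never leave the first branch again.

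Your argument is shorter, needs no cases, and makes the role of $L<1$ transparent. At $L=1$ the map is merely nonexpansive and every $\alpha\ge 2/(1+\mu)$ is fixed, consistent with the paper's remark that $\rho_k$ stalls at $1-\mu$ in that case. The paper's argument gives a finer picture of the trajectory. It shows the factor $L$, which is slow in the realistic regime $L\approx 1$, governs only a transient, and that the asymptotic factor is $\mu$. You obtain this only qualitatively, from $\alpha^*$ lying strictly inside the first-branch region.

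In the write-up, lead with the contraction and drop Step~1 and the ``fallback''. The claim that one application of the branch $g(\alpha)=L\alpha$ returns $\alpha$ to $\{\alpha\le 2/(\mu+L)\}$ is false. For $L=0.99$ and $\mu=0.5$, one has $\alpha_2=1.5$ and $\alpha_3=1.485>2/1.49$, and about a dozen steps are needed. Nothing in the contraction argument depends on it.
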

\begin{proof}
We distinguish two cases depending on the value of $L$. 

\textbf{Case 1:} $L\leq\frac{2}{2-\mu}-\mu.$ In this case, it holds that $$\rho_k\leq1-\mu\leq\frac{2}{L+\mu}-1.$$ 
Hence,  $\alpha_{k+1}=1+\rho_k\in(1,\frac{2}{L+\mu}]$ and therefore
\begin{align*} 
    \rho_{k+1}=\rho(I - \alpha_{k+1} A)= 1-\mu\alpha_{k+1}=\frac{1-\mu}{1+\mu}+\mu\left(\frac{1-\mu}{1+\mu}-\rho_k\right).
\end{align*}
Consequently, 
\begin{align*} 
    \left|\frac{1-\mu}{1+\mu}-\rho_{k+1}\right|=\mu\left|\frac{1-\mu}{1+\mu}-\rho_{k}\right|.
\end{align*}
Thus, the sequence $\rho_k$ converges to $\frac{1-\mu}{1+\mu}$ with contraction factor $\mu$.

\textbf{Case 2:} $\frac{2}{2-\mu}-\mu< L<1.$ For $k=1$, we get $\rho_1=\rho(I - A)=1-\mu$.  Whenever $\rho_k > \frac{2}{L+\mu}-1$, the update rule yields 
\begin{align*}
\rho_{k+1}=\rho(I - \alpha_{k+1} A)=L\alpha_{k+1}-1=L\rho_k+L-1 < \rho_k,
\end{align*}
so the sequence is strictly decreasing. Hence there exists an index $k_0\geq1$ such that  $\rho_{k_0} \geq \frac{2}{L+\mu}-1$ and $\rho_{k_0+1} < \frac{2}{L+\mu}-1$.  We now consider two subcases:

\textbf{Case 2.1:}  $\rho_{k_0+1}\geq\frac{1-\mu}{1+\mu}.$ Using the same relation as in Case 1, 
\begin{align*}
    \rho_{k_0+2}-\frac{1-\mu}{1+\mu}&=\mu\left(\frac{1-\mu}{1+\mu}-\rho_{k_0+1}\right)\\
    \rho_{k_0+3}-\frac{1-\mu}{1+\mu}&=\mu^2\left(\rho_{k_0+1}-\frac{1-\mu}{1+\mu}\right).
\end{align*}
This implies $\rho_{k_0+2}\leq\frac{1-\mu}{1+\mu}\leq\rho_{k_0+3}\leq\rho_{k_0+1}<\frac{2}{L+\mu}-1$. Hence, $\rho_{k_0+i}<\frac{2}{L+\mu}-1$ and $\alpha_{k_0+i}\in(1,\frac{2}{L+\mu})$  for $i\geq1$. Following the same procedure as in Case 1, the sequence $\{\rho_k\}$ converges geometrically to $\frac{1-\mu}{1+\mu}$. 

\textbf{Case 2.2:} $\rho_{k_0+1}<\frac{1-\mu}{1+\mu}.$ Since
\begin{align*}
    \rho_{k_0+1} = L \rho_{k_0} + L-1\geq L\left(\frac{2}{L+\mu}-1\right)+L-1=\frac{2L}{L+\mu} - 1,
\end{align*}
we obtain
\begin{align*}
    \rho_{k_0+2} - \frac{1-\mu}{1+\mu} = \mu\left(\frac{1-\mu}{1+\mu}-\rho_{k_0+1}\right)\leq \mu\left(\frac{2}{1+\mu} - \frac{2L}{L+\mu}\right) < \frac{2}{L+\mu}-1-\frac{1-\mu}{1+\mu},
\end{align*}
where the last inequality holds for $L<1$.
Hence $\frac{1-\mu}{1+\mu}<\rho_{k_0+2}<\frac{2}{L+\mu}-1$, which reduces to \textbf{Case 2.1}.  
 
In all cases, the sequence $\{\rho_k\}$ converges to $\frac{1-\mu}{1+\mu}$.
\end{proof}
\begin{remark}
\label{rk:gdL}
    By \Cref{thm:GD-rhok}, when $L<1$, we have
    \begin{align*}
        \lim\limits_{k \to \infty} \rho_k = \rho_{\texttt{GD}}^*+\delta_L,
    \end{align*}
    where $\delta_L:=\frac{1-\mu}{1+\mu}-\frac{L-\mu}{L+\mu}$ represents the discrepancy introduced by estimating $L$ as 1. As $L$ approaches 1, $\delta_L$ approaches 0. 
\end{remark}

\begin{remark}
 In the special case when $L = 1$, if $L \leq \frac{2}{2-\mu} - \mu$, then we have $\mu = 1$, and the adaptive algorithm converges in one iteration.  On the other hand, if $L > \frac{2}{2-\mu} - \mu$, it holds that
    \begin{align*}
    \rho_{k+1}=\alpha_{k+1}-1=\rho_k=\cdots=\rho_1=1-\mu.
    \end{align*}
    Hence, in the implementation with $\rho_k = \rho(I - \alpha_k A)$, if we observe $\rho_2 = \rho_1$, then it implies that $L = 1$ and $\mu = 1 - \rho_1$. We therefore can directly set $\alpha_k = \frac{2}{L + \mu} = \frac{2}{2-\rho_1}$ in \Cref{alg:AGD}.
\end{remark}

Computing the spectral radius $\rho_k = \rho(I-\alpha_k A)$ can be computationally expensive, especially for large-scale problems.  Therefore, in our implementation, we approximate $\rho_k$ using the geometric average of the ratios of successive residual norms
\begin{align}\label{average_GD}
\rho_k^{l} =
\begin{cases}
\left( \frac{\|\bm{r}_{k}\|}{\|\bm{r}_{0}\|} \right)^{\frac{1}{k}}, & k < l \\
\left(\prod_{i=k-l+1}^{k}\frac{\|\bm{r}_i\|}{\|\bm{r}_{i-1}\|}\right)^{\frac{1}{l}}, & k \geq l.
\end{cases}
\end{align}
If $l=1$, the above equation  simplifies to 
\begin{align*}
    \rho_{k}^1 = \frac{\|\bm{r}_{k}\|}{\|\bm{r}_{k-1}\|}.
\end{align*}
If $l=k$, it becomes
\begin{align*}
    \rho_{k}^k = \left( \frac{\|\bm{r}_{k}\|}{\|\bm{r}_{0}\|} \right)^{\frac{1}{k}}.
\end{align*}
Based on \eqref{average_GD}, we show that \Cref{alg:AGD} achieves a faster  convergence rate than GD with fixed step size $\alpha = 1$.
\begin{lemma}
    Let $\rho_k^l$ denote the geometric mean of the ratios of successive residual norms, as defined in \eqref{average_GD}. Then, the convergence rate of  \Cref{alg:AGD} satisfies 
    $$ 0\leq \frac{\|\bm{r}_{k}\|}{\|\bm{r}_{k-1}\|}\leq 1-\mu.$$
\end{lemma}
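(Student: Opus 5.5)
Since $\bm{r}_k = (I-\alpha_k A)\bm{r}_{k-1}$ and $I-\alpha_k A$ is symmetric, we have $\|\bm{r}_k\| \le \rho(I-\alpha_k A)\,\|\bm{r}_{k-1}\|$. The task is therefore to show that $\rho(I-\alpha_k A)\le 1-\mu$ for every $k$, by proving that every step size satisfies $\alpha_k\in[1,\tfrac{2}{L+\mu}]$ or a slightly larger admissible interval. The eigenvalues of $I-\alpha_k A$ are $1-\alpha_k\lambda_i$, so
\begin{align*}
\rho(I-\alpha_k A)=\max\{\,|1-\alpha_k\mu|,\ |1-\alpha_k L|\,\}.
\end{align*}
For $\alpha_k\ge 1$ we get $1-\alpha_k\mu\le 1-\mu$. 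The other branch satisfies $\alpha_k L-1\le 1-\mu$ as long as $\alpha_k\le \frac{2-\mu}{L}$; since $L\le 1$, the condition $\alpha_k\le 2-\mu$ suffices. Also $1-\alpha_k\lambda_i\ge 1-\alpha_k\ge -(1-\mu)$ holds whenever $\alpha_k\le 2-\mu$. So it suffices to show $1\le\alpha_k\le 2-\mu$, i.e., $0\le \rho_{k-1}^l\le 1-\mu$ (with $\rho_0:=0$, matching $\alpha_1=1$).

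I would argue by strong induction on $k$ that $\frac{\|\bm{r}_j\|}{\|\bm{r}_{j-1}\|}\le 1-\mu$ for all $j\le k$. The base case $k=1$ uses $\alpha_1=1$: $\rho(I-A)=\max\{1-\mu,\,1-L\}= 1-\mu$. For the inductive step, the estimate $\rho_k^l$ in \eqref{average_GD} is either $(\|\bm{r}_k\|/\|\bm{r}_0\|)^{1/k}$ or a product of $l$ successive ratios raised to the power $1/l$. In both cases it is a geometric mean of ratios $\|\bm{r}_i\|/\|\bm{r}_{i-1}\|$ with $i\le k$. The case $k<l$ telescopes: $\|\bm{r}_k\|/\|\bm{r}_0\|=\prod_{i=1}^k \|\bm{r}_i\|/\|\bm{r}_{i-1}\|$. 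By the inductive hypothesis each ratio lies in $[0,1-\mu]$, so the geometric mean does as well, and hence $\alpha_{k+1}=1+\rho_k^l\in[1,2-\mu]$. The spectral bound above then gives $\|\bm{r}_{k+1}\|\le(1-\mu)\|\bm{r}_k\|$, which closes the induction. Nonnegativity is trivial because the ratio is a quotient of norms.

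One technicality must be handled: the ratios are undefined if some $\bm{r}_{i-1}=0$. In that case the iteration has already converged and all subsequent residuals vanish, so I would adopt the convention that the ratio is $0$ (or simply stop there). The main point requiring care is the upper branch $|1-\alpha_k L|$: it must not exceed $1-\mu$ even though $\alpha_k$ may exceed the optimal $\frac{2}{L+\mu}$. This is exactly where the normalization $L\le1$ enters, via $\alpha_k L-1\le \alpha_k-1=\rho_{k-1}^l\le 1-\mu$. I expect this observation to be the only non-routine step.
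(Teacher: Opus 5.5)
Your proof is correct and follows the paper's argument: an induction showing $\alpha_{k+1}=1+\rho_k^l\in[1,2-\mu]$, followed by the bound $\|I-\alpha_{k+1}A\|\le\max\{1-\mu,(2-\mu)L-1\}=1-\mu$, which uses $L\le 1$. You observe that $\rho_k^l$ is always a geometric mean of successive ratios, with the $k<l$ case telescoping; this treats all choices of $l$ at once and is a tidier version of the paper's case split by $l$, which also contains the slip $\alpha_l\le 1-\mu$ where $2-\mu$ is meant.
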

\begin{proof}
When $k=1$, with $\alpha_1=1$, we have $0\leq\rho_1^{l} = \frac{\|\bm{r}_{1}\|}{\|\bm{r}_{0}\|} \leq \|I-A\|=1-\mu$. 
    
\textbf{Case 1:} If $l=1$, assume $0\leq\rho_{k}^1 \leq 1-\mu$ for $k=2,\cdots$, then the step size at each iteration is given by 
$\alpha_{k + 1}=\frac{2}{1+\mu_{k}} = 1 + \rho_{k}.$ 
Thus, one arrives at $1\leq\alpha_{k+1}\leq2-\mu$ and  
\begin{align*}
     \rho_{k+1}^1 = \frac{\|\bm{r}_{k+1}\|}{\|\bm{r}_{k}\|}&\leq \|I-\alpha_{k+1}A\| = \max\{|1-\alpha_{k+1}\mu|, |1-\alpha_{k+1}L|\}\\
     &\leq \max\{1-\mu,(2-\mu)L-1\}=1-\mu.
\end{align*}

\textbf{Case 2:} For $1 < l < k$, suppose that $0 \le \rho_k^{l} \le 1-\mu$ holds for all $k = 2, \ldots, m$, where $m < l - 1$. Then, similarly, we obtain $1\leq\alpha_{k+1}\leq2-\mu$,  $\frac{\|\bm{r}_{k+1}\|}{\|\bm{r}_{k}\|}\leq 1-\mu$, and 
\begin{align*}
    \rho_{k+1}^{l} = \left(\frac{\|\bm{r}_{k+1}\|}{\|\bm{r}_{0}\|}\right)^{\frac{1}{k+1}}\leq 1-\mu. 
\end{align*}
This further gives $1\leq\alpha_{l}\leq 1-\mu$, $\frac{\|\bm{r}_{l}\|}{\|\bm{r}_{l-1}\|}\leq 1-\mu$, and $\rho_l^l\leq1-\mu$. By induction, one obtains that $\frac{\|\bm{r}_{k+1}\|}{\|\bm{r}_{k}\|}\leq 1-\mu$ and $\rho_{k+1}^{l}\leq 1-\mu$ for $k\geq l$. 

\textbf{Case 3:} If $l=k$, then by the same argument as in \textbf{Case 2}, we obtain the desired result. 
\end{proof}

The above lemma implies 
$$-\frac{L-\mu}{L+\mu}\leq\frac{\|\bm{r}_{k}\|}{\|\bm{r}_{k-1}\|}-\rho_{\texttt{GD}}^*\leq 1-\mu-\frac{L-\mu}{L+\mu},$$ which characterizes how the convergence rate of  \Cref{alg:AGD}  deviates from the optimal rate $\rho_{\texttt{GD}}^*$.  Next, we consider the special case $l=1$ in \eqref{average_GD}. This choice is computationally the most economical in practice and provides a clear insight into how the estimate $\rho_k$ relates to the optimal convergence rate $\rho_{\texttt{GD}}^*$.

\begin{theorem}\label{con_gd}
Let $\rho_k=\frac{\|\bm{r}_{k}\|}{\|\bm{r}_{k-1}\|}$. Suppose $\rho_k = \rho_{\texttt{GD}}^* + \delta_L+\epsilon_k,$  
where $\delta_L = \frac{1-\mu}{1+\mu}-\frac{L-\mu}{L+\mu}.$ Then, it holds that $\rho_{k+1} = \rho_{\texttt{GD}}^* + \delta_L+\epsilon_{k+1}$ and $\epsilon_{k+1}$ can be bounded as follows.

\textbf{Case 1:} If $L>\frac{2}{2-\mu}-\mu$, when $\epsilon_k\in(\frac{2}{L+\mu}-\frac{2}{1+\mu}, 1-\mu-\frac{1-\mu}{1+\mu}]$, 
\begin{align*}
    \epsilon_{k+1}\leq L\epsilon_k,
\end{align*}
and when $\epsilon_k \in(-\frac{1-\mu}{1+\mu},\frac{2}{L+\mu}-\frac{2}{1+\mu}]$,
\begin{align*}
\epsilon_{k+1}\leq  -\mu\epsilon_k.
\end{align*}

\textbf{Case 2:} If $L \leq \frac{2}{2-\mu}-\mu$, it becomes $\epsilon_k \in(-\frac{2-\mu}{1+\mu}, \frac{1}{L+\mu}-\frac{2}{1+\mu}]$, and
\begin{align*}
\epsilon_{k+1}\leq  -\mu\epsilon_k.
\end{align*}
\end{theorem}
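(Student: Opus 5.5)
The plan is to reduce everything to a one-step bound on $\rho_{k+1}$ in terms of $\rho_k$, and then rewrite that bound in the error variable $\epsilon_k$. With $l=1$, \Cref{alg:AGD} uses $\alpha_{k+1}=1+\rho_k$ and $\bm{r}_{k+1}=(I-\alpha_{k+1}A)\bm{r}_k$. Since $I-\alpha_{k+1}A$ is symmetric,
\begin{align*}
\rho_{k+1}=\frac{\|\bm{r}_{k+1}\|}{\|\bm{r}_k\|}\le \rho(I-\alpha_{k+1}A)=\max\{|1-\alpha_{k+1}\mu|,\,|1-\alpha_{k+1}L|\}.
\end{align*}
Note that $\rho_{\texttt{GD}}^*+\delta_L=\frac{1-\mu}{1+\mu}=:\bar\rho$, so $\rho_k=\bar\rho+\epsilon_k$ and $\alpha_{k+1}=\frac{2}{1+\mu}+\epsilon_k$. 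By the preceding lemma, $0\le\rho_k\le 1-\mu$. This gives the range $\epsilon_k\in[-\bar\rho,\,1-\mu-\bar\rho]$ and $\alpha_{k+1}\in[1,2-\mu]$. The key observation is that the two branches of the max switch exactly at $\alpha_{k+1}=\frac{2}{L+\mu}$. In the $\epsilon$ variable this threshold is $\epsilon_k=\frac{2}{L+\mu}-\frac{2}{1+\mu}$, which matches the breakpoint in the statement. This is the same dichotomy used in the proof of \Cref{thm:GD-rhok}.

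\textbf{Branch $\alpha_{k+1}\le\frac{2}{L+\mu}$.} Here the spectral radius is $1-\alpha_{k+1}\mu$, which is nonnegative because $\alpha_{k+1}\le 2-\mu<1/\mu$. Substituting gives
\begin{align*}
\rho_{k+1}\le 1-\mu\frac{2}{1+\mu}-\mu\epsilon_k=\bar\rho-\mu\epsilon_k,
\end{align*}
that is, $\epsilon_{k+1}\le-\mu\epsilon_k$.

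\textbf{Branch $\alpha_{k+1}>\frac{2}{L+\mu}$.} Here the spectral radius is $L\alpha_{k+1}-1=\frac{2L}{1+\mu}+L\epsilon_k-1$. We need this to be at most $\bar\rho+L\epsilon_k$, i.e. $\frac{2L}{1+\mu}-1\le\frac{1-\mu}{1+\mu}$, which is equivalent to $L\le1$. That holds by the standing assumption, so $\epsilon_{k+1}\le L\epsilon_k$.

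Next we match the branches to the two cases of the statement. The second branch is nonempty on the admissible range exactly when $2-\mu>\frac{2}{L+\mu}$, i.e. $L>\frac{2}{2-\mu}-\mu$; this is Case 1, and both intervals there are obtained by intersecting the branch conditions with the range from the lemma. In Case 2 we have $\frac{2}{L+\mu}\ge2-\mu\ge\alpha_{k+1}$ for every admissible $\epsilon_k$. So only the first branch occurs, and $\epsilon_{k+1}\le-\mu\epsilon_k$ on the whole admissible interval.

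The main obstacle I expect is bookkeeping rather than analysis. I have to verify that the interval endpoints in the statement agree with those derived from the lemma and the threshold, and take care at the boundary point where both branches coincide. In particular, the Case 2 interval as printed, $(-\frac{2-\mu}{1+\mu},\frac{1}{L+\mu}-\frac{2}{1+\mu}]$, does not obviously coincide with the derived range. I would check it against the constraint $\alpha_{k+1}\le\frac{2}{L+\mu}$. I would also check the sign of $1-\alpha_{k+1}\mu$ at the lower endpoint, to make sure the absolute value is handled correctly and the first branch is valid throughout that interval.
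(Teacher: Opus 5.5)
Your proposal is correct and follows essentially the same route as the paper: bound $\rho_{k+1}$ by $\|I-\alpha_{k+1}A\|=\max\{|1-\alpha_{k+1}\mu|,|1-\alpha_{k+1}L|\}$, split at $\alpha_{k+1}=\frac{2}{L+\mu}$, and use $L\le 1$ in the upper branch. Your direct check $\frac{2L}{1+\mu}-1\le\frac{1-\mu}{1+\mu}$ is a slightly cleaner form of the paper's rearrangement via $\delta_L\ge 0$. On your bookkeeping concern, the printed Case~2 interval appears to be a typo: the paper's proof actually works with $\epsilon_k\in(-\frac{1-\mu}{1+\mu},\frac{2}{L+\mu}-\frac{2}{1+\mu}]$, which is your first-branch condition, and since $\rho(I-\alpha A)=1-\alpha\mu$ for every $\alpha\in(0,\frac{2}{L+\mu}]$, the bound $\epsilon_{k+1}\le-\mu\epsilon_k$ also holds formally on the printed interval.
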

\begin{proof}
For $k=1$, it yields $$\rho_1\leq \|I-\alpha_1A\|= 1-\mu=\rho_{\texttt{GD}}^*+\delta_L+\left(1-\mu-\frac{1-\mu}{1+\mu}\right).$$
Then, there exists some $\epsilon_1$ such that $\rho_1=\rho_{\texttt{GD}}^*+\delta_L+\epsilon_1$. Suppose $\rho_k=\rho_{\texttt{GD}}^*+\delta_L+\epsilon_k$. Observing that
\begin{align*}
    \rho_{k+1} = \frac{\|\bm{r}_{k+1}\|}{\|\bm{r}_{k}\|} \leq \|I-\alpha_{k+1}A\|= \max\{|1-\alpha_{k+1}\mu|, |1-\alpha_{k+1}L|\},
\end{align*}
we then estimate $\rho_{k+1}$ in different cases:

\textbf{Case 1:} If $L> \frac{2}{2-\mu}-\mu$, direct computation gives $\epsilon_k \in (-\frac{1-\mu}{1+\mu}, 1-\mu-\frac{1-\mu}{1+\mu}]$.  We further consider two different cases.

\textbf{Case 1.1}: If $\epsilon_k \in(\frac{2}{L+\mu}-\frac{2}{1+\mu}, 1-\mu-\frac{1-\mu}{1+\mu}]$, it follows that  $\alpha_{k+1} \in \left(\frac{2}{L + \mu}, 2-\mu \right]$. Also, we have the following estimates:
\begin{align*}
\rho_{k+1}&\leq L(1+\rho_k)-1 = L(1 + \rho_{\texttt{GD}}^* +\delta_L+ \epsilon_{k}) - 1 \\
&= \rho_{\texttt{GD}}^* + L\delta_L + L\epsilon_k  - (1-L)(\rho_{\texttt{GD}}^*+1)\leq \rho_{\texttt{GD}}^* + \delta_L + L\epsilon_k.
\end{align*}
This gives $ \epsilon_{k+1}\leq L\epsilon_{k}.$ 

\textbf{Case 1.2}: If $\epsilon_k\in(-\frac{1-\mu}{1+\mu},\frac{2}{L+\mu}-\frac{2}{1+\mu}]$, we obtain  $\alpha_{k+1}\in(1,\frac{2}{L+\mu}]$ and
\begin{align*}
    \rho_{k+1} &\leq 1-\mu(1+\rho_k) = 1-\mu(1+\rho_{\texttt{GD}}^*+\delta_L+\epsilon_k )\\
    & =\rho_{\texttt{GD}}^*-\mu\delta_L -\mu\epsilon_k - (1+\mu)\left(\rho_{\texttt{GD}}^* - \frac{1-\mu}{1+\mu}\right) = \rho_{\texttt{GD}}^* + \delta_L-\mu\epsilon_k .
\end{align*}
Hence, we have $\epsilon_{k+1}\leq -\mu\epsilon_k$.

\textbf{Case 2:} If $L\leq\frac{2}{2-\mu}-\mu$, we have $\epsilon_k +\delta_L\in(-\frac{L-\mu}{L+\mu},\frac{2(1-L)}{L+\mu}]$ and $\alpha_{k+1}\in(1,\frac{2}{L+\mu}]$. Following the similar analysis as in \textbf{Case 1.2} above, we obtain that
 $\rho_{k+1}\leq \rho_{\texttt{GD}}^* + \delta_L-\mu\epsilon_k$ and, thus,  $\epsilon_{k+1}\leq -\mu\epsilon_k$.
 
 Combining all the above cases, we complete the proof.
\end{proof}

\begin{remark}\label{remrk_gd} 
\Cref{con_gd} shows that the deviation $\rho_k - \rho_{\texttt{GD}}^*$ consists of two  contributions:  $\epsilon_k$, which accounts for the error introduced by the adaptive update, and $\delta_L$, which results from the rescaling of $A$.
    
   When the largest eigenvalue is accurately estimated, the rescaled matrix satisfies $L\approx 1$ and, in particular, $L \ge \frac{2}{2 - \mu} - \mu$. In this regime, the upper bound of the sequence $\{\rho_k\}$ decreases as described in \textbf{Case 1}. Conversely, if the largest eigenvalue is not estimated with sufficient accuracy, the problem falls into the regime of \textbf{Case 2}. In this situation, no lower bound is available for $\epsilon_{k+1}$, and the convergence rate may fluctuate in practice. 
The numerical experiments in Section~\ref{subsect:quadopt} further illustrate that, when the largest eigenvalue is not estimated accurately enough, the convergence rate of $\rho_k$  still approaches $\frac{1-\mu}{1+\mu}$, but exhibits a noticeable gap with $\rho_{\texttt{GD}}^*$ due to $\delta_L$.

\end{remark}


\subsection{Adaptive Accelerated Gradient Descent}
In this section, we discuss how to extend the idea of adaptive GD to accelerated methods, such as NAG and HB methods.

\subsubsection{Adaptive Nesterov Acceleration}
Based on the update rule for NAG method, we have
\begin{align*}
\boldsymbol{x}^{k+1} & =\boldsymbol{x}^k-\alpha \nabla f\left(\boldsymbol{x}^k+\beta\left(\boldsymbol{x}^k-\boldsymbol{x}^{k-1}\right)\right)+\beta\left(\boldsymbol{x}^k-\boldsymbol{x}^{k-1}\right)\\
& = \boldsymbol{x}^k-\alpha \left( A\left(\boldsymbol{x}^k+\beta\left(\boldsymbol{x}^k-\boldsymbol{x}^{k-1}\right)\right)-\boldsymbol{b}\right)+\beta\left(\boldsymbol{x}^k-\boldsymbol{x}^{k-1}\right)
\end{align*}
Subtracting $\boldsymbol{x}^*$ from both sides gives
\begin{align*}
\boldsymbol{x}^{k+1}-\boldsymbol{x}^{*}=(1+\beta)(I-\alpha A)(\boldsymbol{x}^k-\boldsymbol{x}^{*})-\beta(I-\alpha A)(\boldsymbol{x}^{k-1}-\boldsymbol{x}^{*}).
\end{align*}
Define 
\begin{align*}
M := 
\begin{pmatrix}
  (1+\beta)(I-\alpha A) & -\beta(I-\alpha A) \\
  I & 0
\end{pmatrix}\quad\text{and}\quad \mathcal{R}_k:=\binom{\bm{r}_{k}}{\bm{r}_{k-1}}.
\end{align*}
Then the residuals for the NAG follow a two-term recurrence:
\begin{align}\label{resi_NAG}
  \mathcal{R}_{k+1}=M\mathcal{R}_{k}, \quad\text{for}\quad k \geq 1.
\end{align}
Note that with the optimal parameters given in \Cref{tab:classic_methods}, we get $\rho_{\texttt{NAG}}^* = 1-\frac{\sqrt{\mu}}{\sqrt{L}}$. Solving for the strong convexity parameter yields $\mu = L\left(1-\rho_{\texttt{NAG}}^*\right)^2$. 

Our adaptive NAG method follows the same procedure as NAG but employs different parameters, i.e., 
\begin{align*}
    \bm{y}_{k} &= \bm{x}_{k} + \beta_{k+1} (\bm{x}_{k} - \bm{x}_{k-1})\\
    \bm{x}_{k+1} &= \bm{y}_k - \alpha_{k+1} (A \bm{y}_k - \bm{b}),
\end{align*}
where the step size $\alpha_{k}$ and momentum parameter $\beta_{k}$ are determined adaptively based on information from the previous iterations. Consequently, the residual of the adaptive NAG satisfies a similar recurrence relation, $\mathcal{R}_{k+1}=M_{k} \mathcal{R}_{k}$, where the error propagation matrix $M_k$ is given by
\begin{align*}
    M_k=\begin{pmatrix}
(1+\beta_k)(I-\alpha_k A)& -\beta_k(I-\alpha_k A) \\
I & 0
\end{pmatrix}.
\end{align*}
Let $\rho_k$ be an empirical approximation of $\rho_{\texttt{NAG}}^*$. Following the same idea of adaptive GD, since $\mu = L\left(1-\rho_{\texttt{NAG}}^*\right)^2$, we again use $L=1$ and approximate $\mu$ at iteration $k$ as $\mu_{k} = \left(1-\rho_{k}\right)^2$. Consequently, $\alpha_{k+1}$ and $\beta_{k+1}$ are updated as follows: 
\begin{align*}
\alpha_{k+1}= 1 \quad\text{and}\quad
\beta_{k+1} = \frac{1-\sqrt{\mu_{k}}}{1+\sqrt{\mu_{k}}}=\frac{\rho_k}{2-\rho_k}. 
\end{align*}
The adaptive NAG is summarized in \Cref{alg:adaptive_nag}

\begin{algorithm}[h!]
\caption{Adaptive Nesterov Accelerated Gradient}
\begin{algorithmic}[1]\label{alg:adaptive_nag}
\STATE \textbf{Input:} SPD matrix $A \in \mathbb{R}^{n \times n}$, vector $\bm{b} \in \mathbb{R}^n$, initial point $\bm{x}_0 \in \mathbb{R}^n$
\STATE \textbf{Initialize:} Compute $\bm{r}_0 = \bm{b} - A \bm{x}_0$, 
\STATE Perform first step using GD: $\alpha_1 = 1$, \quad $\bm{x}_1 = \bm{x}_0 - \alpha_1 \nabla f(\bm{x}_0)$
\STATE $\bm{r}_1 = \bm{b} - A \bm{x}_1$, \quad $\rho_1 = \|\bm{r}_1\| / \|\bm{r}_0\|$
\FOR{$k = 2, 3, \dots$}
    \STATE $\alpha_{k} = 1$
    \STATE $\beta_{k} = \frac{\rho_{k-1}}{2-\rho_{k-1}}$
    \STATE $\bm{y}_{k-1} = \bm{x}_{k-1} + \beta_{k} (\bm{x}_{k-1} - \bm{x}_{k-2})$
    \STATE $\bm{x}_{k} = \bm{y}_{k-1} - \alpha_k \nabla f(\bm{y}_{k-1})$
    \STATE $\rho_k \approx \rho_{\texttt{NAG}}^*,\text{ where } \rho_k \text{ is an estimate of } \rho_{\texttt{NAG}}^*$
\ENDFOR
\end{algorithmic}
\end{algorithm}

We first present a lemma showing that, if the estimation $\rho_k$ is obtained by computing the spectral radius of $M_k$, then the proposed adaptive NAG \Cref{alg:adaptive_nag} always converges faster than GD with $\alpha = 1$.
\begin{lemma}\label{NAG_lem}
    Let $\rho_k= \rho(M_k)$ in  \Cref{alg:adaptive_nag}. Then, we have
$$0\leq\rho_k \leq 1-\mu.$$
\end{lemma}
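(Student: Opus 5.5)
The plan is to diagonalize $M_k$ using the eigenvectors of $A$, write $\rho(M_k)$ as a maximum over the eigenvalues of $A$, and close by induction on $k$ using the update $\beta_{k+1}=\rho_k/(2-\rho_k)$.

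\textbf{Reduction to $2\times 2$ blocks.} Since $\alpha_k=1$ for all $k$, and $A$ is SPD, the eigenvectors of $A$ decompose $M_k$ into $2\times 2$ blocks. For each eigenvalue $\lambda\in[\mu,L]$ the block is
\[
\begin{pmatrix}(1+\beta_k)(1-\lambda) & -\beta_k(1-\lambda)\\ 1 & 0\end{pmatrix}.
\]
With $t=1-\lambda\in[1-L,1-\mu]\subset[0,1-\mu]$ (using $L\le 1$), its characteristic polynomial is $z^2-(1+\beta_k)t\,z+\beta_k t=0$. Hence $\rho(M_k)=\max_{\lambda}\rho_\lambda$, where $\rho_\lambda$ is the larger root modulus of this quadratic. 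In every case $\rho_\lambda\ge 0$, so the lower bound $\rho_k\ge 0$ is immediate.

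\textbf{Bounding each root modulus.} I would show that $\rho_\lambda\le t$ whenever $\beta_k\in[0,1)$.
\begin{itemize}
\item \emph{Complex roots}, i.e.\ $(1+\beta)^2t<4\beta$. Here $|z|^2=\beta t$, and $\beta t\le t^2$ exactly when $\beta\le t$. I need to check this: the discriminant condition gives $t<4\beta/(1+\beta)^2$. This alone does not imply $\beta\le t$, so a cruder bound may be needed. Since $\sqrt{\beta t}\le\sqrt{\beta}$, it suffices to control $\beta$ via the induction hypothesis.
\item \emph{Real roots.} The larger root is $\tfrac{(1+\beta)t+\sqrt{(1+\beta)^2t^2-4\beta t}}{2}$. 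This is $\le t$ iff $\sqrt{(1+\beta)^2t^2-4\beta t}\le(1-\beta)t$. Squaring reduces this to $4\beta t^2\le 4\beta t$, i.e.\ $t\le 1$, which holds.
\end{itemize}

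\textbf{Induction on $k$.} The base case is $\rho_1=\|\bm r_1\|/\|\bm r_0\|\le\|I-A\|=1-\mu$. Assume $\rho_{k-1}\le 1-\mu$. Then
\[
\beta_k=\frac{\rho_{k-1}}{2-\rho_{k-1}}\le\frac{1-\mu}{1+\mu}<1.
\]
The real-root case gives $\rho_\lambda\le t\le 1-\mu$. In the complex case $|z|=\sqrt{\beta_k t}$, and I need $\beta_k t\le(1-\mu)^2$. Since $t\le 1-\mu$, it suffices that $\beta_k\le 1-\mu$, which follows from $\beta_k\le\frac{1-\mu}{1+\mu}\le 1-\mu$. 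Taking the maximum over $\lambda$ gives $\rho_k\le 1-\mu$, which closes the induction.

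\textbf{Main obstacle.} The delicate step is the complex-root regime. The key observation there is that $|z|^2=\beta_k t$, so the bound needs $\beta_k\le 1-\mu$, and this is exactly what the induction hypothesis supplies through the monotone map $\rho\mapsto\rho/(2-\rho)$. The real-root case reduces cleanly to $t\le 1$, which relies on the normalization $L\le 1$ and on $t\ge 0$, i.e.\ $\lambda\le 1$.
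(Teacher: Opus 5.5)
Your proof is correct, and it takes a different, more elementary route than the paper. The paper splits globally on the value of $\rho_k$. If $\rho_k\ge 1-\sqrt{\mu}$, then $\beta_{k+1}\ge\frac{1-\sqrt{\mu}}{1+\sqrt{\mu}}$ puts every block in the complex-root regime, so $\rho_{k+1}=\sqrt{\beta_{k+1}(1-\mu)}\le\frac{1-\mu}{\sqrt{1+\mu}}$. Otherwise it differentiates the larger real root $\theta(\lambda,\beta)$, shows it is decreasing in both $\lambda$ and $\beta$, and dominates it by $\theta(\mu,0)=1-\mu$.

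You instead split block by block on the sign of the discriminant, and each case closes with one line of algebra. For real roots, squaring gives $\theta\le t=1-\lambda$ for any $\beta\in[0,1]$. For complex roots, $\beta_k\le\frac{1-\mu}{1+\mu}\le 1-\mu$ gives $\sqrt{\beta_k t}\le 1-\mu$. This avoids calculus and treats every block uniformly. In particular, it covers the complex-root blocks (those with $\lambda$ near $1$) that still occur when $\beta_{k+1}$ is below the critical value; the paper's second case does not treat these explicitly.

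What the paper's route buys is structural information yours does not give: the exact formula $\rho_{k+1}=\sqrt{\beta_{k+1}(1-\mu)}$ above the threshold, and the monotonicity of $\rho(M_{k+1})$ in $\beta_{k+1}$ below it. Both are reused in Theorem~\ref{Thm:ada_NAG} to show $\rho_k\to 1-\sqrt{\mu}$. Your uniform bound suffices for this lemma but not for that theorem.

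One presentational fix: your opening claim that $\rho_\lambda\le t$ whenever $\beta_k\in[0,1)$ is false in the complex regime, as you noticed yourself. For example, $t=0.01$ and $\beta=0.5$ give complex roots of modulus $\sqrt{0.005}>t$. State that bound only for real roots, and use $\sqrt{\beta_k(1-\mu)}$ for complex ones.
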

\begin{proof}
Given that the solution at the first step is updated using the GD with $\alpha_0 =1$, it naturally yields $\rho_1 \leq 1 - \mu$. We now turn to the case when $k \geq 2$. 
Since $A$ is SPD, it admits an eigenvalue decomposition $A = U \Lambda U^\top$,
where $U$ is orthogonal and  $\Lambda=\operatorname{diag}(\lambda_1,\cdots,\lambda_n)$. By applying a suitable reordering of the coordinates, $M_k \in \mathbb{R}^{2n\times 2n}$ can be transformed into a block-diagonal form with $2\times 2$ diagonal blocks, i.e.,
\begin{align*}
B_k = \operatorname{diag}(B_k^1, B_k^2, \cdots, B_k^n), \ \text{where }	
B_k^i=\begin{pmatrix} (1+\beta_k) (1- \lambda_i) & -\beta_k (1-\lambda_i) \\ 1 & 0 \end{pmatrix}.
\end{align*}
Therefore, $\rho_k$ equals the maximum spectral radius of $B_k^i$ over all $i$. 
The eigenvalues of $B_k^i$ can be obtained by solving its characteristic polynomial:
\begin{align}\label{chara_NAG} 
\theta^2 - (1 + \beta_k)( 1- \lambda_i)\theta + \beta_k (1-\lambda_i)
=0.
\end{align}
When $\beta_k \geq \frac{1-\sqrt{\lambda_i}}{1+\sqrt{\lambda_i}}$, the above characteristic equation has either a repeated real root or a pair of complex conjugate roots, with magnitude $\sqrt{\beta_k(1-\lambda_i)}$. 
Recall that  
\begin{align*}
    \beta_{k+1}= \frac{\rho_{k}}{2-\rho_{k}},\quad\mu_k = (1-\rho_k)^2.
\end{align*}
We now analyze the behavior of $\beta_{k+1}$  under different ranges of $\rho_{k}$.

\textbf{Case 1}: When $\rho_{k}\in [1-\sqrt{\mu},1-\mu]$, then  $\beta_{k+1}\geq\frac{1-\sqrt{\mu}}{1+\sqrt{\mu}}$ and
\begin{align*}   \rho_{k+1}=\sqrt{\beta_{k+1}(1-\mu)}=\sqrt{\frac{\rho_{k}}{2-\rho_{k}}(1-\mu)}\leq \frac{1-\mu}{\sqrt{1+\mu}}<1-\mu.
\end{align*} 

\textbf{Case 2}: When $\rho_{k}\in\left(0,1-\sqrt{\mu}\right)$, there exist blocks whose characteristic equation \eqref{chara_NAG} admits two distinct real roots for $\lambda_i\neq 1$, where the larger one is given by:
\begin{align*}    \theta(\lambda_i,\beta_{k+1}) = \frac{(1+\beta_{k+1})(1-\lambda_i)+\sqrt{(1+\beta_{k+1})^2(1-\lambda_i)^2-4\beta_{k+1}(1-\lambda_i)}}{2}.
\end{align*}
Since the discriminant 
$(1 + \beta_{k+1})^2 (1 - \lambda_i)^2 - 4\beta_{k+1} (1 - \lambda_i) > 0$ for $\lambda_i\neq1$,
it follows that 
$(1 + \beta_{k+1})^2 (1 - \lambda_i) > 4\beta_{k+1} > 2\beta_{k+1}$. Differentiating $\theta$ with respect to $\lambda_i$, we obtain
\begin{align*}
    \frac{d\theta}{d\lambda_i} = -\frac{1+\beta_{k+1}}{2} + \frac{-(1+\beta_{k+1})^2(1-\lambda_i)+2\beta_{k+1}}{2\sqrt{(1+\beta_{k+1})^2(1-\lambda_i)^2-4\beta_{k+1}(1-\lambda_i)}}<0.
\end{align*}
Hence, $\theta(\lambda_i,\beta_{k+1})$ is a decreasing function with respect to $\lambda_i$. Similarly, differentiating $\theta$ with respect to $\beta_{k+1}$, we obtain
\begin{align*}
    \frac{d\theta}{d\beta_{k+1}}&=\frac{1-\lambda_i}{2}+\frac{(1-\lambda_i)^2(1+\beta_{k+1})-2(1-\lambda_i)}{2\sqrt{(1+\beta_{k+1})^2(1-\lambda_i)^2-4\beta_{k+1}(1-\lambda_i)}}\\
    &=\frac{(1-\lambda_i)(\sqrt{(1+\beta_{k+1})^2(1-\lambda_i)^2-4\beta_{k+1}(1-\lambda_i)}+(1-\lambda_i)(1+\beta_{k+1})-2)}{2\sqrt{(1+\beta_{k+1})^2(1-\lambda_i)^2-4\beta_{k+1}(1-\lambda_i)}}.
\end{align*}
A straightforward computation shows that
\begin{align*}
    &\sqrt{(1+\beta_{k+1})^2(1-\lambda_i)^2-4\beta_{k+1}(1-\lambda_i)}+(1-\lambda_i)(1+\beta_{k+1})-2\\
    &=\sqrt{(1+\beta_{k+1})^2(1-\lambda_i)^2-4(1+\beta_{k+1})(1-\lambda_i)+4(1-\lambda_i)}+(1-\lambda_i)(1+\beta_{k+1})-2\\
    &<\sqrt{(1+\beta_{k+1})^2(1-\lambda_i)^2-4(1+\beta_{k+1})(1-\lambda_i)+4}+(1-\lambda_i)(1+\beta_{k+1})-2\\
    &=0.    
\end{align*}
This leads to $\frac{d\theta}{d\beta_{k+1}}<0$. Since $\theta(\lambda_i,\beta_{k+1})$ is strictly decreasing with respect to both $\lambda_i$ and $\beta_{k+1}$ for all $\lambda_i<1$, we can bound $\rho_{k+1}$ as
\begin{align*}
    \rho_{k+1}\leq \max\{\theta(\lambda_i,\beta_{k+1}),\theta(1,\beta_{k+1})\}<\max\{\theta(\mu,0),0\}=1-\mu.
\end{align*}
This completes the proof.
\end{proof}

Next, we further show that $\rho_k$ converges to $1 - \sqrt{\mu}$, which is the optimal convergence rate of standard NAG with $L=1$, demonstrating \Cref{alg:adaptive_nag} can achieve acceleration.
\begin{theorem}\label{Thm:ada_NAG}
Suppose that $\rho_k= \rho(M_k)$ in  \Cref{alg:adaptive_nag}. It follows that
$$\lim\limits_{k \to \infty} \rho_k = 1 - \sqrt{\mu}.$$
\end{theorem}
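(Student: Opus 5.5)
The plan is to reduce the statement to the scalar fixed-point iteration $\rho_{k+1}=g(\rho_k)$, where
\[
g(\rho):=\max_i \rho\bigl(B^i(\beta)\bigr),\qquad \beta=\frac{\rho}{2-\rho},
\]
and $B^i(\beta)$ is the $2\times 2$ block from the proof of \Cref{NAG_lem} with $\alpha=1$. By \Cref{NAG_lem}, every iterate lies in $[0,1-\mu]$. It is enough to show that $g$ maps $[0,1-1\sqrt{\mu}\,]$-type small values above $1-\sqrt{\mu}$, and that $g$ is monotone on $[1-\sqrt{\mu},1-\mu]$ with unique fixed point $1-\sqrt{\mu}$. (More precisely: $g$ sends $[0,1-\sqrt{\mu})$ into $(1-\sqrt{\mu},1-\mu]$, and on $[1-\sqrt{\mu},1-\mu]$ it is continuous, increasing, satisfies $g(\rho)\le\rho$, and has $1-\sqrt{\mu}$ as its unique fixed point.)

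\textbf{Step 1 (closed form of $g$ above the threshold).} Take $\rho\ge 1-\sqrt{\mu}$. Then $\beta\ge \frac{1-\sqrt{\mu}}{1+\sqrt{\mu}}\ge \frac{1-\sqrt{\lambda_i}}{1+\sqrt{\lambda_i}}$ for every $i$, since the right-hand side decreases in $\lambda_i$. So every block with $\lambda_i<1$ has complex or repeated roots of modulus $\sqrt{\beta(1-\lambda_i)}$, and blocks with $\lambda_i=1$ contribute $0$. The maximum is attained at $\lambda_1=\mu$, as in Case 1 of \Cref{NAG_lem}, giving
\[
g(\rho)=\sqrt{\frac{(1-\mu)\rho}{2-\rho}} .
\]
This is continuous and increasing in $\rho$. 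Next, $g(\rho)\le \rho$ is equivalent to $(1-\mu)\le \rho(2-\rho)=1-(1-\rho)^2$, i.e.\ to $(1-\rho)^2\le\mu$, i.e.\ to $\rho\ge 1-\sqrt{\mu}$. Equality holds exactly at $\rho=1-\sqrt{\mu}$; one checks directly that $\beta^*(1-\mu)=(1-\sqrt{\mu})^2$ for $\beta^*=\frac{1-\sqrt{\mu}}{1+\sqrt{\mu}}$. Finally, since $g$ is increasing, $g(\rho)\ge g(1-\sqrt{\mu})=1-\sqrt{\mu}$, so this interval is invariant.

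\textbf{Step 2 (below the threshold).} Take $\rho\in[0,1-\sqrt{\mu})$, so $\beta<\beta^*$. Then the $\mu$-block has two distinct real roots, and by the monotonicity computations in Case 2 of \Cref{NAG_lem} its larger root $\theta(\mu,\beta)$ is strictly decreasing in $\beta$. Hence
\[
g(\rho)\ge \theta(\mu,\beta)>\theta(\mu,\beta^*)=1-\sqrt{\mu}.
\]
Here $\theta(\mu,\beta^*)$ is the repeated root, which equals $1-\sqrt{\mu}$ by continuity of the root in $\beta$. Together with \Cref{NAG_lem}, this places $\rho_{k+1}$ in $(1-\sqrt{\mu},1-\mu]$.

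\textbf{Step 3 (conclusion).} By Steps 1 and 2, from index $k=2$ on (or from $k=1$ if $\rho_1\ge1-\sqrt{\mu}$), the sequence lies in $[1-\sqrt{\mu},1-\mu]$. It is nonincreasing there because $g(\rho)\le\rho$. Being bounded below, it converges to some limit $\bar\rho$. Continuity of $g$ on that interval forces $g(\bar\rho)=\bar\rho$, and the equivalence in Step 1 then gives $\bar\rho=1-\sqrt{\mu}$.

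\textbf{Main obstacle.} I expect the main difficulty to be Step 1's claim that the maximum over blocks is attained at $\lambda_1=\mu$ uniformly. Blocks with $\lambda_i$ close to $1$ (where $1-\lambda_i$ is tiny) must not produce a real root exceeding $\sqrt{\beta(1-\mu)}$. This is settled by the observation that the critical momentum $\frac{1-\sqrt{\lambda}}{1+\sqrt{\lambda}}$ decreases in $\lambda$, so once the $\mu$-block is in the complex regime, all blocks are. A second, smaller point is the boundary case $\mu=1$ (or $\rho=0$), which is trivial: then $\rho_k=0$.
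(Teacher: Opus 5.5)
Your proposal is correct and follows essentially the same route as the paper. Above the threshold you use the closed form $\sqrt{(1-\mu)\rho/(2-\rho)}$, which is increasing, satisfies $g(\rho)\le\rho$, and has unique fixed point $1-\sqrt{\mu}$; below the threshold you make a one-step jump past $1-\sqrt{\mu}$ because the larger root $\theta(\mu,\beta)$ is strictly decreasing in $\beta$. Your extra justifications fill in points the paper leaves implicit: that the critical momentum $\frac{1-\sqrt{\lambda}}{1+\sqrt{\lambda}}$ decreases in $\lambda$, so the $\mu$-block dominates, and that $g$ is continuous when passing to the limit. The only inaccuracy is your aside that $\rho=0$ forces $\rho_k=0$, which is false when $\mu<1$, but your Step 2 already covers $\rho=0$, so nothing is lost.
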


\begin{proof}
For $k=1$, it holds that $0\leq\rho_1\leq1-\mu$.  

We first consider the case when $\rho_1\in[1-\sqrt{\mu},1-\mu]$. In this case, we have $\beta_{2}\geq\frac{1-\sqrt{\mu}}{1+\sqrt{\mu}}$. A brief analysis reveals that $$\rho_{k+1}=\sqrt{\beta_k(1-\mu)}=\sqrt{\frac{\rho_{k}}{2-\rho_{k}}(1-\mu)}$$ 
is an increasing function with respect to $\rho_k$ on the interval $(0, 1]$. This implies that if $\rho_1 \leq 1-\mu$, then $\rho_2 \leq \frac{1-\mu}{\sqrt{1+\mu}}$, and more generally, $\rho_k \in [1-\sqrt{\mu}, 1-\mu]$ for all $k \geq 2$.  
We can further show the sequence $\{\rho_k\}$ is nonincreasing: $\rho_{k+1}\leq \rho_k.$ By contradiction, suppose $\rho_{k+1}>\rho_k$. Then, we have \[\rho_{k+1}^2 = \frac{\rho_{k}}{2-\rho_{k}}(1-\mu)>\rho_k^2 \Longrightarrow \frac{1-\mu}{2-\rho_k} > \rho_k.\] Rearranging terms leads to $\mu < (1-\rho_k)^2$, which directly contradicts with $\rho_k\geq 1-\sqrt{\mu}$.  Consequently, the sequence $\{\rho_k\}$ is monotonically decreasing and bounded below. By the Monotone Convergence Theorem, the sequence converges to a finite limit $\hat{\rho}$. At the limit, the fixed-point equation must hold:$$\hat{\rho} = \sqrt{\frac{\hat{\rho}}{2-\hat{\rho}}(1-\mu)}.$$ Solving for $\hat{\rho}$ yields  $\lim\limits_{k \to \infty} \rho_k = \hat{\rho} = 1 - \sqrt{\mu}$. 

On the other hand, when  $\rho_1\in(0,1-\sqrt{\mu})$, then by invoking Lemma \ref{NAG_lem}, one finds that $\frac{d\theta}{d\rho_k}=\frac{d\theta}{d\beta_k}\frac{d\beta_k}{d\rho_k}<0$, which implies that $\rho_2\in(1-\sqrt{\mu},1-\mu)$. A similar argument as the case above yields $\rho_k\in[1-\sqrt{\mu},1-\mu]$ for $k\geq2$. Consequently, $\rho_k$ converges to $1-\sqrt{\mu}$, which completes the proof.
\end{proof}
\begin{remark}
\Cref{Thm:ada_NAG} establishes that $\rho_k$ approaches the convergence factor $1-\sqrt{\mu}$ corresponding to NAG with $\alpha = 1$ and $\beta=\frac{1-\sqrt{\mu}}{1+\sqrt{\mu}}$. Similar to the GD case, we can rewrite it as
    \begin{align*}
    \lim\limits_{k \to \infty} \rho_k = 1- \sqrt{\mu}=\rho_{\texttt{NAG}}^*+\left(\frac{\sqrt{\mu}}{\sqrt{L}} - \sqrt{\mu}\right),
\end{align*}
where the second term $\left(\frac{\sqrt{\mu}}{\sqrt{L}} - \sqrt{\mu}\right)$ represents the discrepancy introduced by estimating $L$ as $1$ (cf. Remark~\ref{rk:gdL}).

We now consider the case $L = 1$. Assume $\rho_k:=\rho_{\texttt{NAG}}^*+\epsilon_k.$ When, $\epsilon_{k}\in [0,\sqrt{\mu}-\mu]$, then  $\beta_{k+1}\geq\frac{1-\sqrt{\mu}}{1+\sqrt{\mu}}$.  With $\mu = (1-\rho_{\texttt{NAG}}^*)^2$, we further have 
\begin{align*}
    \rho_{k+1}
    &=\sqrt{\beta_{k+1}(1-\mu)}\\
    &=\sqrt{\left(\frac{\rho_{\texttt{NAG}}^* + \epsilon_{k}}{2-\rho_{\texttt{NAG}}^*-\epsilon_{k}}\right)\left(1-(1-\rho_{\texttt{NAG}}^*)^2\right)}\\
    &=\sqrt{\left(\rho_{\texttt{NAG}}^* + \frac{\epsilon_{k}}{2-\rho_{\texttt{NAG}}^*-\epsilon_{k}}\right)^2 -\left(\frac{\epsilon_{k}}{2-\rho_{\texttt{NAG}}^*-\epsilon_{k}}\right)^2}\\
    &\leq \rho_{\texttt{NAG}}^* + \frac{\epsilon_{k}}{2-\rho_{\texttt{NAG}}^*-\epsilon_{k}}\\
    &\leq \rho_{\texttt{NAG}}^*+\frac{1}{1+\mu}\epsilon_{k}.
\end{align*}
It follows that $\epsilon_{k+1}\leq \frac{1}{1+\mu}\epsilon_{k}$. Then, $\epsilon_k$ is a monotone and bounded sequence that converges to 0 at a linear rate with factor $\frac{1}{1+\mu}$. This implies that the upper bound of $\rho_{k+1}$ approaches to $\rho_{\texttt{NAG}}^*$.
\end{remark}


As in the adaptive GD case, for practical computation, we approximate $\rho_{\texttt{NAG}}^{*}$ by $\rho_k$ computed from the geometric average of the ratios of successive residual norms
\begin{align}\label{average_NAG}
\rho_k^{l} =
\begin{cases}
\left( \frac{\|\mathcal{R}_{k}\|}{\|\mathcal{R}_{0}\|} \right)^{\frac{1}{k}}, & k < l \\
\left(\prod_{i=k-l+1}^{k}\frac{\|\mathcal{R}_i\|}{\|\mathcal{R}_{i-1}\|}\right)^{\frac{1}{l}}, & k \geq l.
\end{cases}
\end{align}
For $k\geq l$, we obtain
\begin{align*}
    \rho_{k}^{l} \leq \left(\prod_{i=k-l+1}^{k}\|M_i\|\right)^{\frac{1}{l}}.
\end{align*}
For a general matrix $M$, the spectral radius is always bounded above by the matrix 2-norm, i.e., $\rho(M) \leq \|M\|$. A sharp characterization of the convergence behavior via the matrix 2-norm is nontrivial, and we defer a rigorous treatment  for future work.  Nonetheless, the numerical results presented in Section \ref{numeri_test}  support the effectiveness of Algorithm \ref{alg:adaptive_nag} when employing the approximated $\rho_k$ as defined in \eqref{average_NAG}. 
\begin{remark}
    The proposed framework offers broader insights for construction of adaptive variants for a variety of momentum-based algorithms, including the Heavy-Ball (HB) method, HNAG+ \cite{chen2025hnag++}, the Accelerated Over-Relaxation HB method \cite{wei2025acceleratedoverrelaxationheavyballmethod}, the Triple Momentum Method \cite{van2017fastest}, $C^2$-Momentum \cite{van2025fastest}, and the Information-Theoretic Exact Method \cite{taylor2023optimal}). In these methods, the optimal parameters can be expressed in terms of the convergence factor. To illustrate this adaptability, we present an  adaptive variant of the HB method, detailed in \Cref{alg:adaptive_hb}. For practical implementation, we estimate the optimal convergence rate $\rho_{\texttt{HB}}^*$ by computing the geometric average of the ratios of successive residual norms, as described in \eqref{average_NAG}. 
\end{remark}
\begin{algorithm}[h!]
\caption{Adaptive Heavy Ball Method}
\begin{algorithmic}[1]\label{alg:adaptive_hb}
\STATE \textbf{Input:} SPD matrix $A \in \mathbb{R}^{n \times n}$, vector $\bm{b} \in \mathbb{R}^n$, initial point $\bm{x}_0 \in \mathbb{R}^n$
\STATE \textbf{Initialize:} Compute $\bm{r}_0 = \bm{b} - A \bm{x}_0 $
\STATE Update the first step using GD: $\alpha_1 = 1$, \quad $\bm{x}_1 = \bm{x}_0 - \alpha_1 \nabla f(\bm{x}_0)$
\STATE $\bm{r}_1 = A \bm{x}_1 - \bm{b}$, \quad $\rho_1 = \|\bm{r}_1\| / \|\bm{r}_0\|$
\FOR{$k = 2, 3, \dots$}
    \STATE $\alpha_{k}= (1+\rho_{k-1})^2$
    \STATE $\beta_{k} = \rho_{k-1}^2$
    \STATE $\bm{x}_{k} = \bm{x}_{k-1} - \alpha_k \nabla f(\bm{x}_{k-1}) + \beta_k (\bm{x}_{k-1} - \bm{x}_{k-2})$
    \STATE $\rho_k \approx \rho_{\texttt{HB}}^*,\text{ where } \rho_k \text{ is an estimate of } \rho_{\texttt{HB}}^*$
\ENDFOR
\end{algorithmic}
\end{algorithm}

\section{Numerical Experiments}\label{numeri_test}
In this section, we present numerical results to evaluate the performance of the proposed  adaptive GD, NAG, and HB methods. We compare these adaptive variants against their standard counterparts configured with theoretically optimal parameters. Notably, various approaches can be used to approximate $\rho^*$, with the main idea being to approximate the spectral radius of the underlying iteration matrix using observable and computationally inexpensive quantities. In the following experiments, we consider the adaptive strategies described in \eqref{average_GD} and \eqref{average_NAG} to update parameters dynamically. 

\subsection{Quadratic optimization problem}
\label{subsect:quadopt}
We consider solving quadratic optimization problems with diagonal matrix $A$ that have varying eigenvalue distributions. First, to examine the behavior described in \Cref{con_gd}, we construct diagonal matrices whose eigenvalues are randomly distributed.  
We sample $\mu \sim \mathcal{U}(0.2,\,0.4)$, 
where $\mathcal{U}(a, b)$ denotes the uniform distribution over the interval $[a , b]$. 
We consider two cases:
\begin{align*}
    L_1 = \frac12\!\left(1 + \frac{2}{2-\mu} - \mu\right), \qquad
    L_2 = \frac12\!\left(\mu + \frac{2}{2-\mu} - \mu\right).
\end{align*}
By construction, $L_1 > \frac{2}{2-\mu}$, whereas $L_2$ fails to meet this condition, corresponding to \textbf{Cases 1} and \textbf{2} in \cref{con_gd}, respectively. Given $\mu$ and $L = L_j$, $j=1, \, 2$, the eigenvalues are generated uniformly in $[\mu,\,L]$ with the smallest and largest fixed, i.e.,
\begin{align*}
\lambda_1 = \mu, \quad 
\lambda_n = L_j, \quad
\lambda_i \in \mu + (L_j-\mu)\,\mathcal{U}(0,1),\quad 2 \le i \le n-1, \quad j = 1, \, 2.
\end{align*}
\begin{figure}[htbp]
    \centering
    \begin{minipage}[t]{0.49\textwidth}
        \centering
        \includegraphics[width=\textwidth]{ 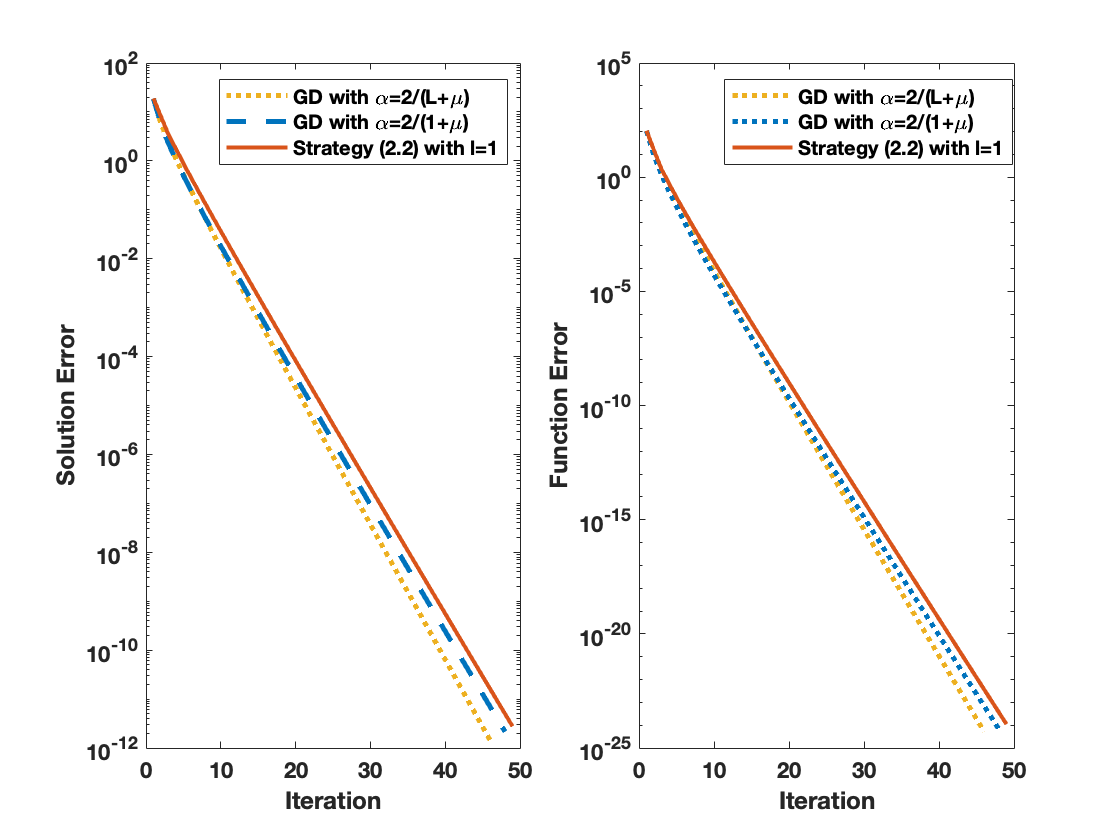}
    \end{minipage}%
    \hfill
    \begin{minipage}[t]{0.49\textwidth}
        \centering
        \includegraphics[width=\textwidth]{ 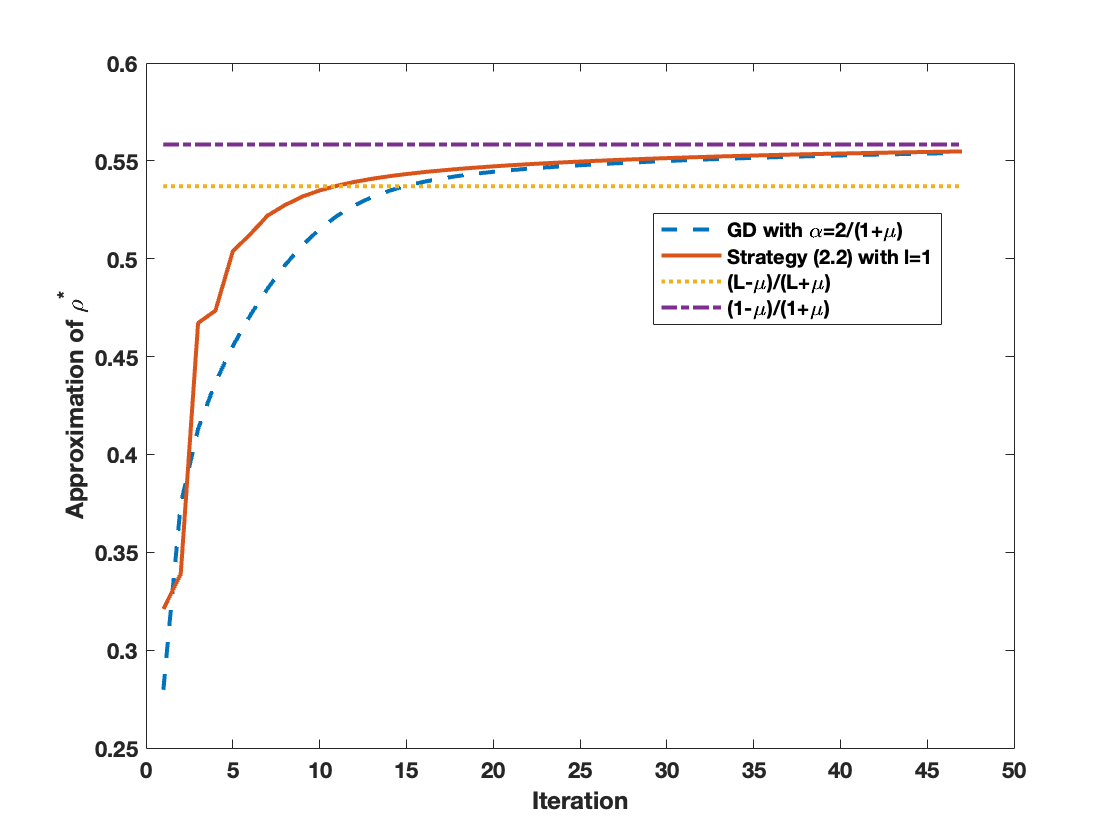}
    \end{minipage}
    \caption{Error (left) and estimated $\rho^*$ (right) for GD on a diagonal matrix with random eigenvalue distribution satisfying $L>\frac{2}{2-\mu}-\mu$.}\label{gd_condition}
\end{figure}
\begin{figure}[htbp]
    \centering
    \begin{minipage}[t]{0.49\textwidth}
        \centering
        \includegraphics[width=\textwidth]{ 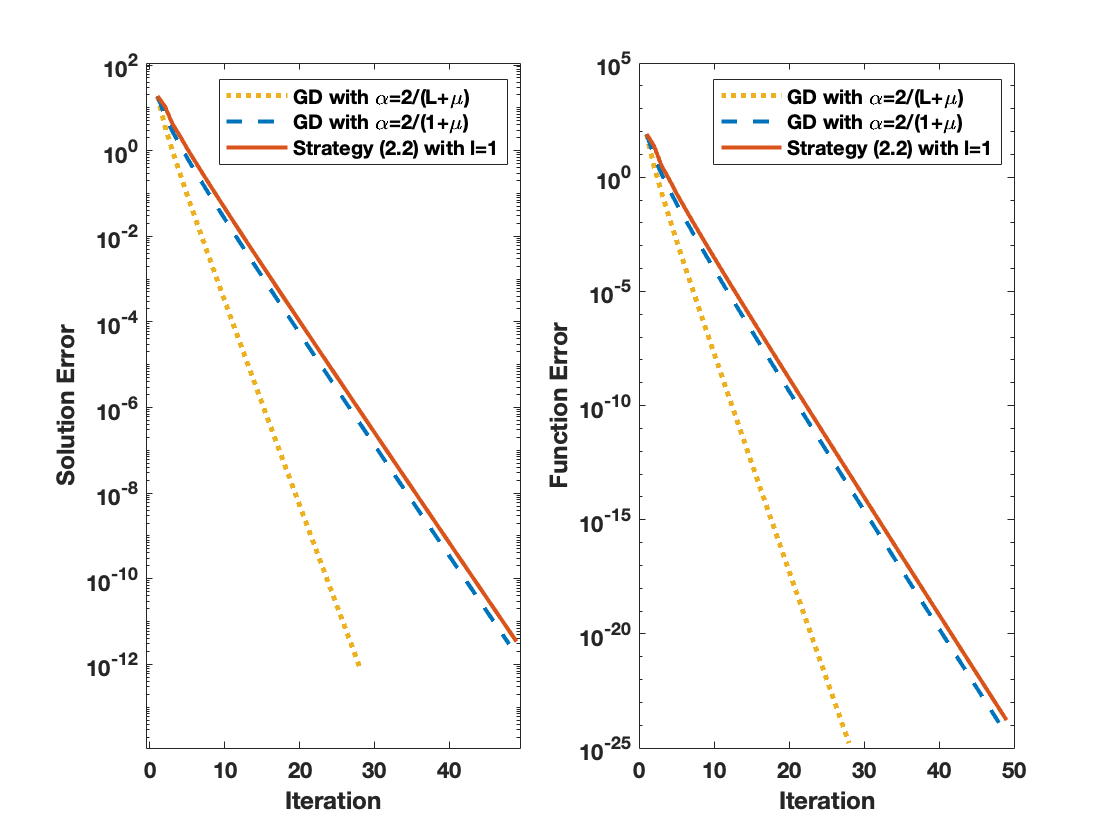}
    \end{minipage}%
    \hfill
    \begin{minipage}[t]{0.49\textwidth}
        \centering
        \includegraphics[width=\textwidth]{ 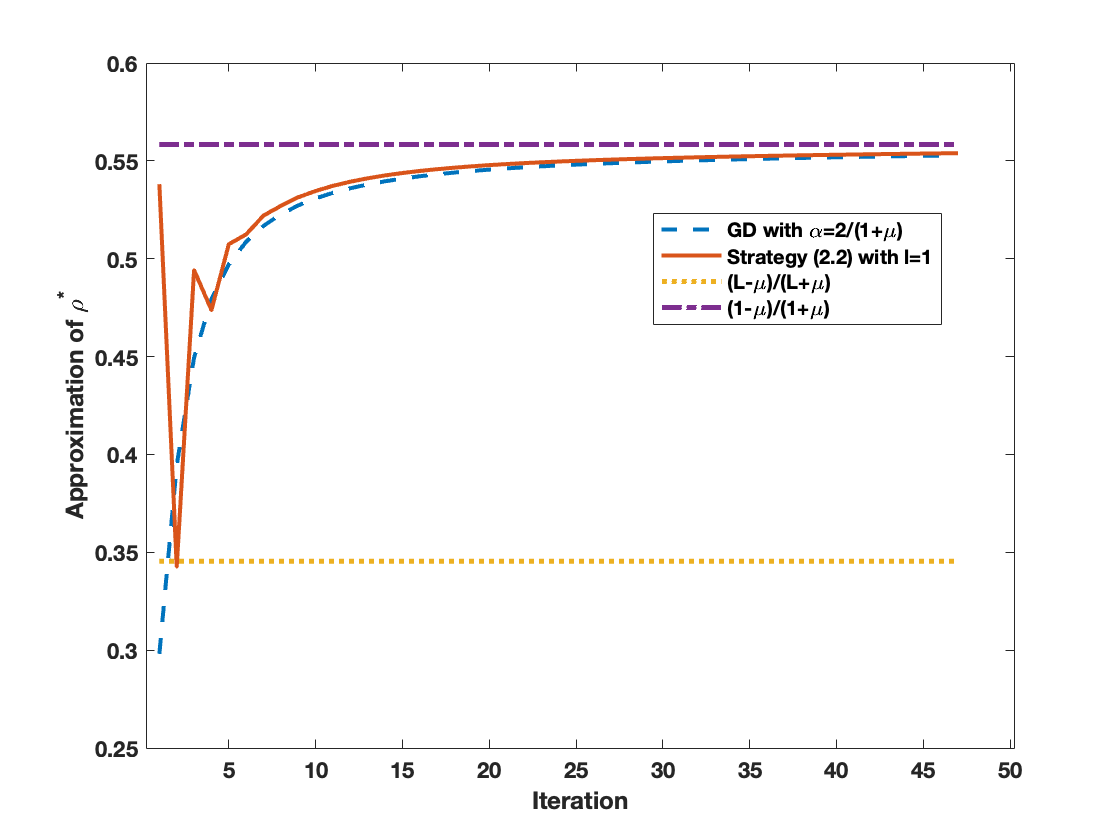}
    \end{minipage}
    \caption{Error (left) and estimated $\rho^*$ (right) for GD on a diagonal matrix with random eigenvalue distribution satisfying $L\leq\frac{2}{2-\mu}-\mu$.}\label{gd_condition_no}
\end{figure}
Let $n=1000$. The vector $\bm{b}$ is set to the zero vector. As a result, the quadratic optimization problem $f$ attains its minimum at $\bm{x}^* = \bm{0}$. All methods are initialized with a random vector $\bm{x}_0 \in \mathbb{R}^n$ generated by \texttt{rand}$(n, 1)$.
The iteration is terminated when either iteration count reaches a maximum \texttt{maxIt}, or the gradient norm $\|\nabla f(\bm{x}_k)\|$ is less than a tolerance \texttt{tol}.
Unless otherwise specified, we use \texttt{maxIt} = 3000 and \texttt{tol} = $10^{-12}$. In addition, we define the solution error as $\|\bm{x}_k - \bm{x}^*\|$, and the function error as $f(\bm{x}_k) - f(\bm{x}^*)$.

The behavior of the error and approximation of $\rho_{\texttt{GD}}^*$ across the iterations $k$ for two cases are shown in \cref{gd_condition} and \ref{gd_condition_no}. For comparison, we also plot the results of GD with $\alpha = \frac{1-\mu}{1+\mu}$, where the evolution of $\rho_k$ is characterized by $\|\bm r_k\| / \|\bm r_{k-1}\|$.
The results are consistent with the observation made in \Cref{remrk_gd}. In both cases, $\rho_k$ converges to $\frac{1-\mu}{1+\mu}$, which is the convergence factor of GD with step size $\alpha = \frac{2}{1+\mu}$, rather than the optimal rate $\rho_{\texttt{GD}}^{*}$. In particular, larger deviations of $L$ from 1, corresponding to poor rescaling of the Hessian in general cases, result in a convergence factor that is increasingly farther from the optimal one.

Next, we study three methods employing adaptive strategies described in \eqref{average_GD} and \eqref{average_NAG}, with $l = 1, 5, \text{ and } k$, respectively. The diagonal Hessians are constructed as follows.

\emph{Uniform}: the eigenvalues are evenly spaced in the interval $[1, 1000]$; that is,
$$\lambda_i \in \texttt{linspace}(1, 1000, n).$$

\emph{Log}: the eigenvalues are logarithmically spaced from 1 to $10^5$, representing a wide dynamic range; that is,
$$\lambda_i \in \texttt{logspace}(0, 5, n).$$

\emph{Cluster}: 90\% of the eigenvalues are sampled uniformly in the narrow range $[0, 0.1]$, while the remaining 10\% are outliers centered around 0.7; that is,
\begin{align*}
\lambda_\text{cluster} \sim \mathcal{U}(0,0.1), \quad \lambda_\text{outlier} \sim \mathcal{U}(0.65,0.75), \quad\text{then}
\quad\lambda_i \in \{\lambda_\text{cluster}, \lambda_\text{outlier}\}.
\end{align*}
Assuming that a more accurate estimate of the largest eigenvalue $\lambda_n$ is available, we can rescale the matrix $A$ such that $L = 1$.  Here, for standard methods, we use strategy \eqref{average_GD} and \eqref{average_NAG} with $l=1$ to estimate $\rho_k$ at each iteration. Besides, for NAG and HB, we initialize the first iteration using a single GD step with fixed step size $\alpha_0 = 1$: $ \bm{x}_{1} = \bm{x}_{0} - \alpha_0 \nabla f(\bm{x}_{0}).$ 


\begin{figure}[htbp]
    \centering
    \begin{minipage}[t]{0.49\textwidth}
        \centering
        \includegraphics[width=\textwidth]{ 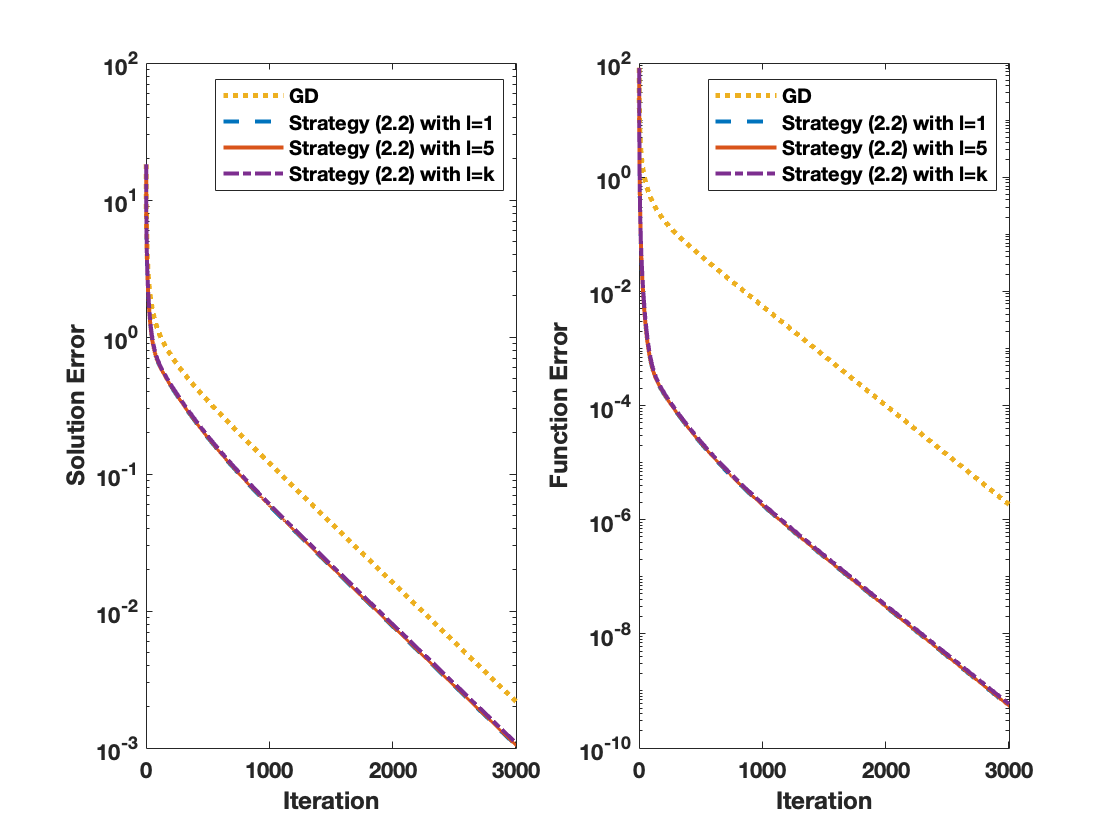}
    \end{minipage}%
    \hfill
    \begin{minipage}[t]{0.49\textwidth}
        \centering
        \includegraphics[width=\textwidth]{ 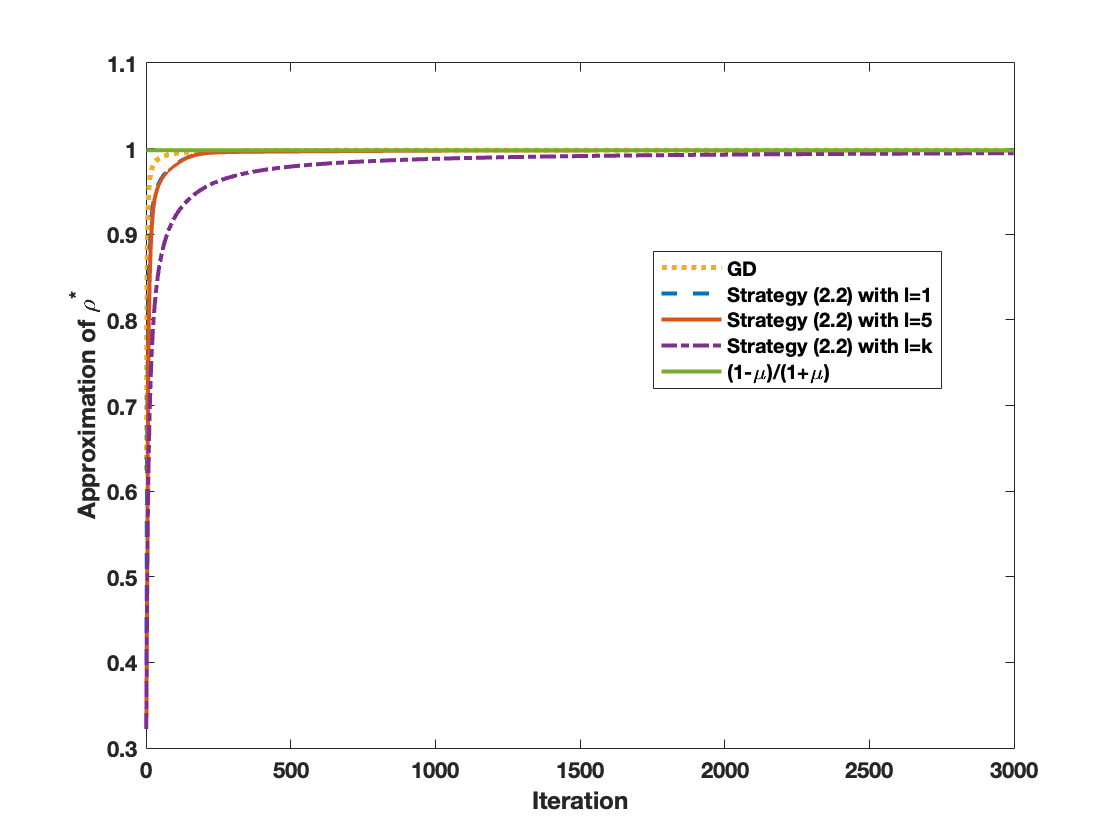}
    \end{minipage}
    \caption{Error (left) and estimated $\rho^*$ (right) for GD on a diagonal matrix with uniform eigenvalue distribution ($n=1000$).}
    \label{fig:uniform_gd}
\end{figure}

\begin{figure}[htbp]
    \centering
    \begin{minipage}[t]{0.49\textwidth}
        \centering
        \includegraphics[width=\textwidth]{ 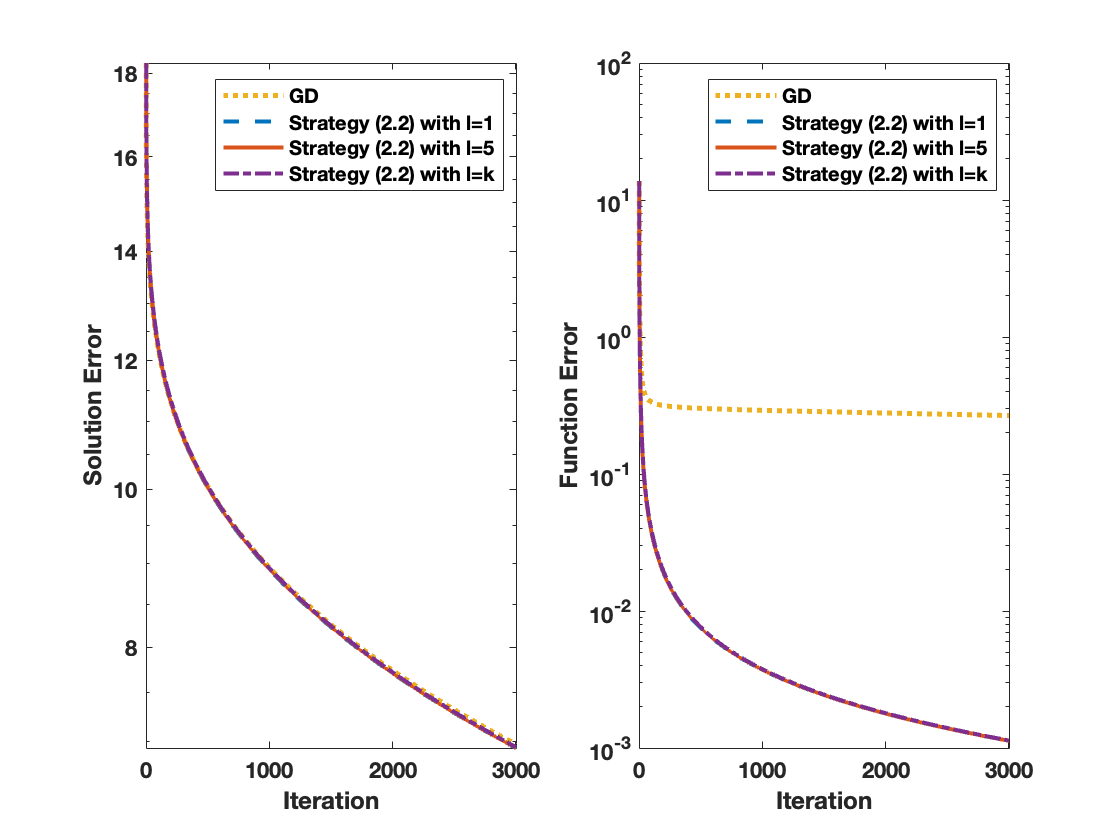}
    \end{minipage}%
    \hfill
    \begin{minipage}[t]{0.49\textwidth}
        \centering
        \includegraphics[width=\textwidth]{ 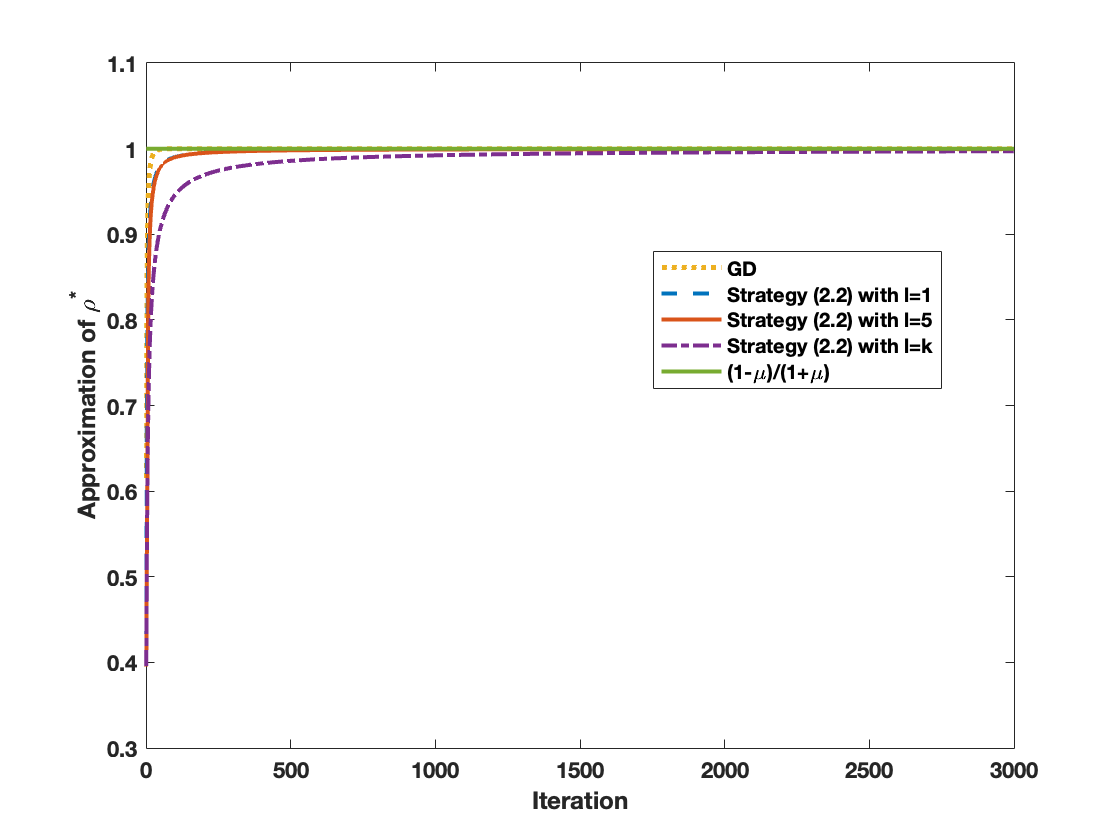}
    \end{minipage}
    \caption{Error (left) and estimated $\rho^*$ (right) for GD on a diagonal matrix with log-spaced eigenvalue distribution ($n=1000$).}
    \label{fig:log_gd}
\end{figure}

\begin{figure}[htbp]
    \centering
    \begin{minipage}[t]{0.49\textwidth}
        \centering
        \includegraphics[width=\textwidth]{ 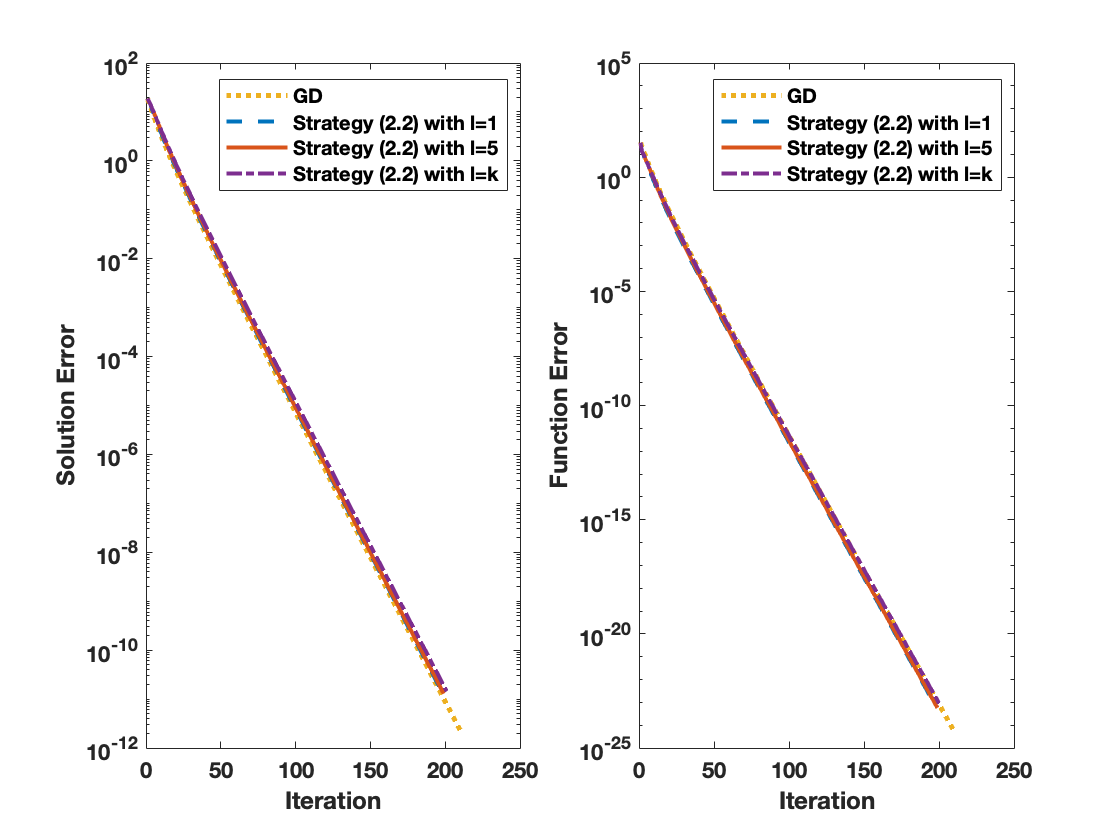}
    \end{minipage}%
    \hfill
    \begin{minipage}[t]{0.49\textwidth}
        \centering
        \includegraphics[width=\textwidth]{ 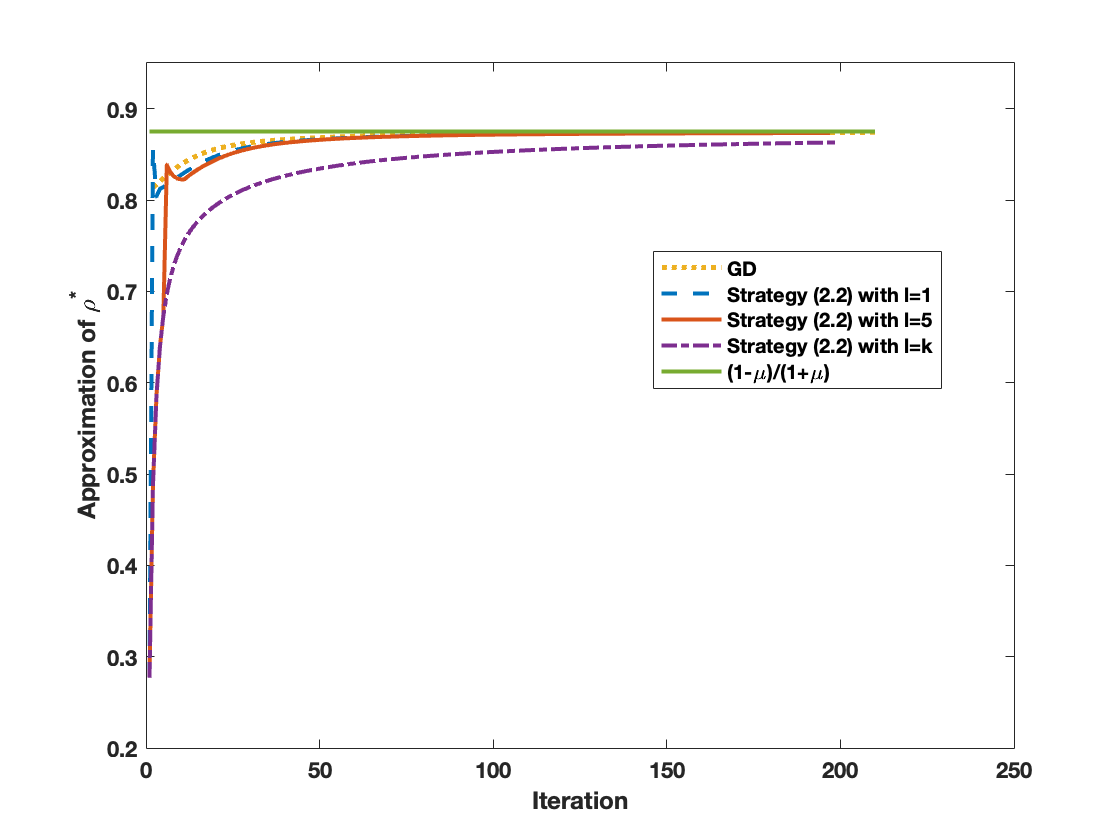}
    \end{minipage}
    \caption{Error (left) and estimated $\rho^*$ (right) for GD on a diagonal matrix with clustered eigenvalue distribution ($n=1000$).}
    \label{fig:cluster_gd}
\end{figure}

\Cref{fig:uniform_gd}-\ref{fig:cluster_gd} report the performance of GD and its adaptive variants (strategy \eqref{average_GD} with $l=1,5$, and $k$) on diagonal matrices with varying eigenvalue distributions. We see that for all tests, the estimated $\rho_k$ approaches $\rho_{\texttt{GD}}^*$ from below.  In both the uniform and log-spaced cases, the adaptive variants exhibit higher accuracy, reducing  the function value error by several orders of magnitude compared to standard GD. This is because the eigenvalues are relatively dispersed. Consequently, even when two iterates are close in the $\ell^2$ norm, their function values may differ significantly. In the clustered setting, the performance of the adaptive variants is comparable to that of standard GD. Furthermore, strategy \eqref{average_GD} with $l=k$, yields a smoother approximation of $\rho_k$, as it averages over all previous iterations. 

The comparison of NAG and its adaptive variants (strategy \eqref{average_NAG} with $l=1,5$, and $k$) are summarized in  \Cref{fig:uniform_na}-\ref{fig:cluster_nag}. We notice that strategy \eqref{average_NAG} with $l=1$  and 5 produce results that are comparable to standard NAG, demonstrating the effectiveness of the adaptive strategies. While strategy \eqref{average_NAG} with $l=k$ tends to perform slightly worse in the uniform and log-spaced cases. This is due to the fact that, in this case,  $\rho_k$ incorporating the entire history of iterations, leads to a more conservative choice. Moreover, the estimated convergence bound $\rho_k$ shows oscillations, but eventually converges to $\rho_{\texttt{NAG}}^*$. This behavior results from using only the most recent iteration, and hence the estimate is sensitive to short-term fluctuations 

\begin{figure}[htbp]
    \centering
    \begin{minipage}[t]{0.49\textwidth}
        \centering
        \includegraphics[width=\textwidth]{ 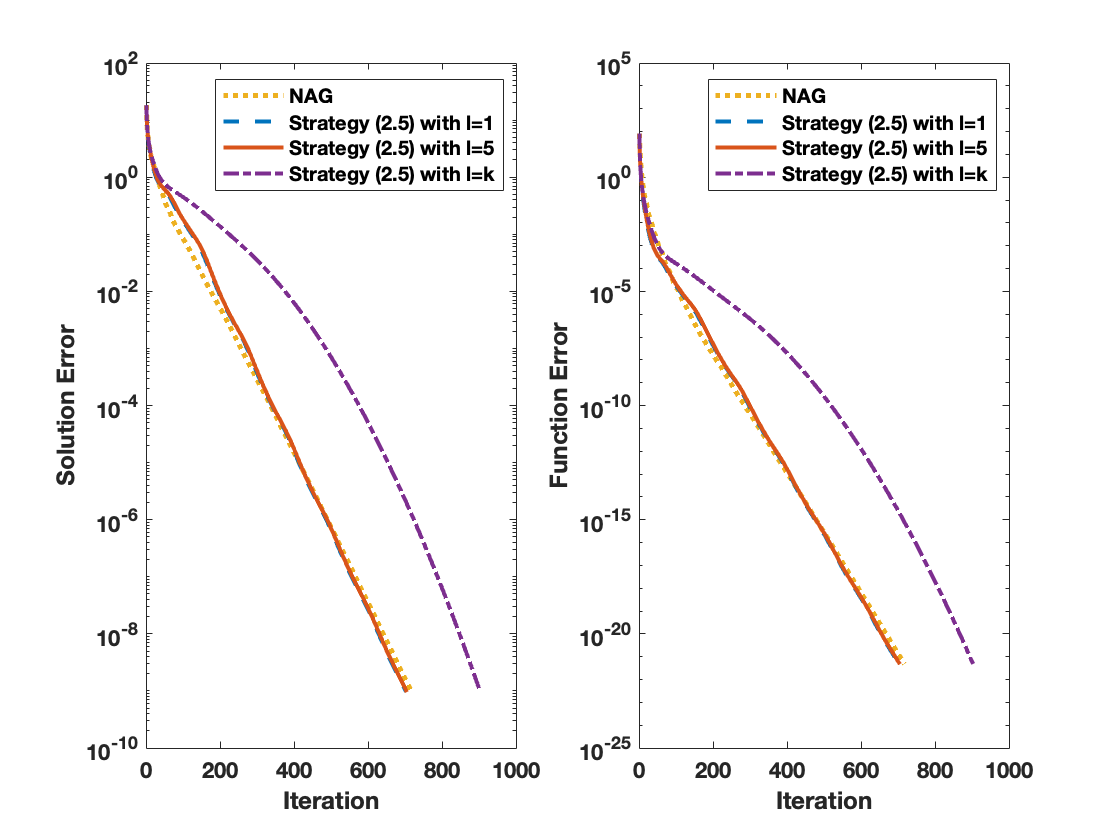}
    \end{minipage}%
    \hfill
    \begin{minipage}[t]{0.49\textwidth}
        \centering
        \includegraphics[width=\textwidth]{ 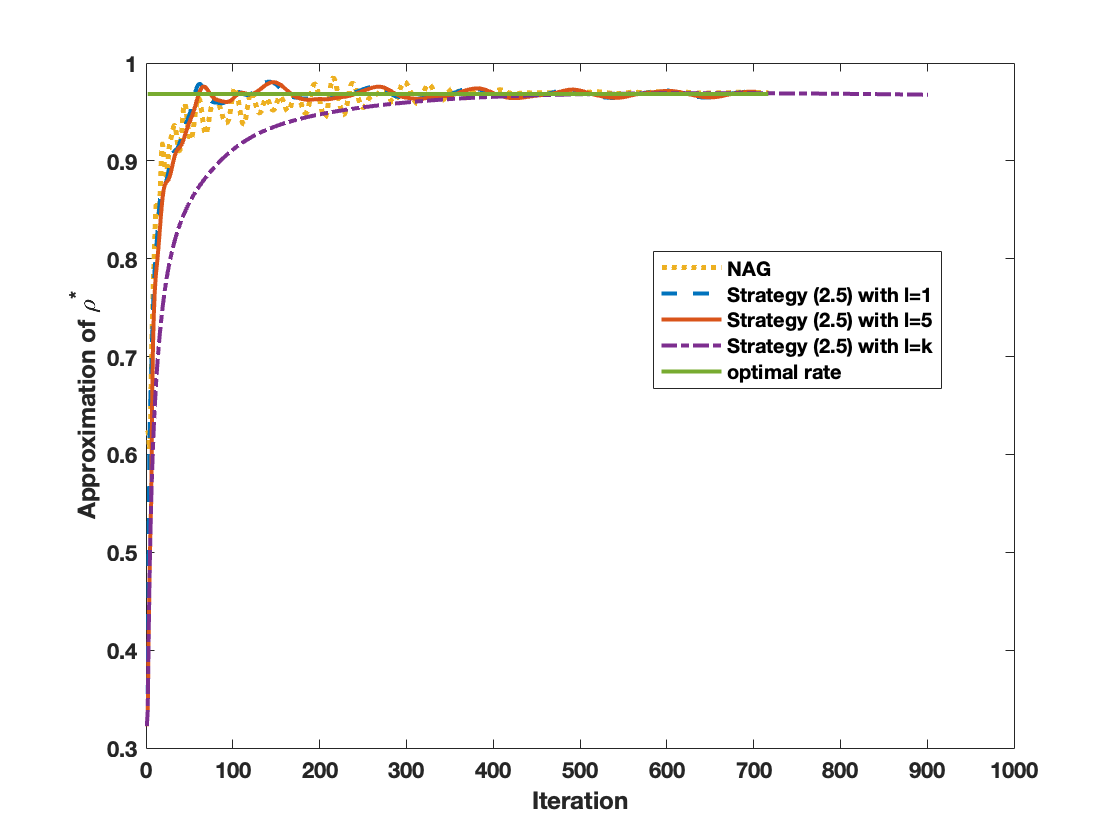}
    \end{minipage}
    \caption{Error (left) and estimated $\rho^*$ (right) for NAG on a diagonal matrix with uniform eigenvalue distribution ($n=1000$).}
    \label{fig:uniform_na}
\end{figure}
\begin{figure}[htbp]
    \centering
    \begin{minipage}[t]{0.49\textwidth}
        \centering
        \includegraphics[width=\textwidth]{ 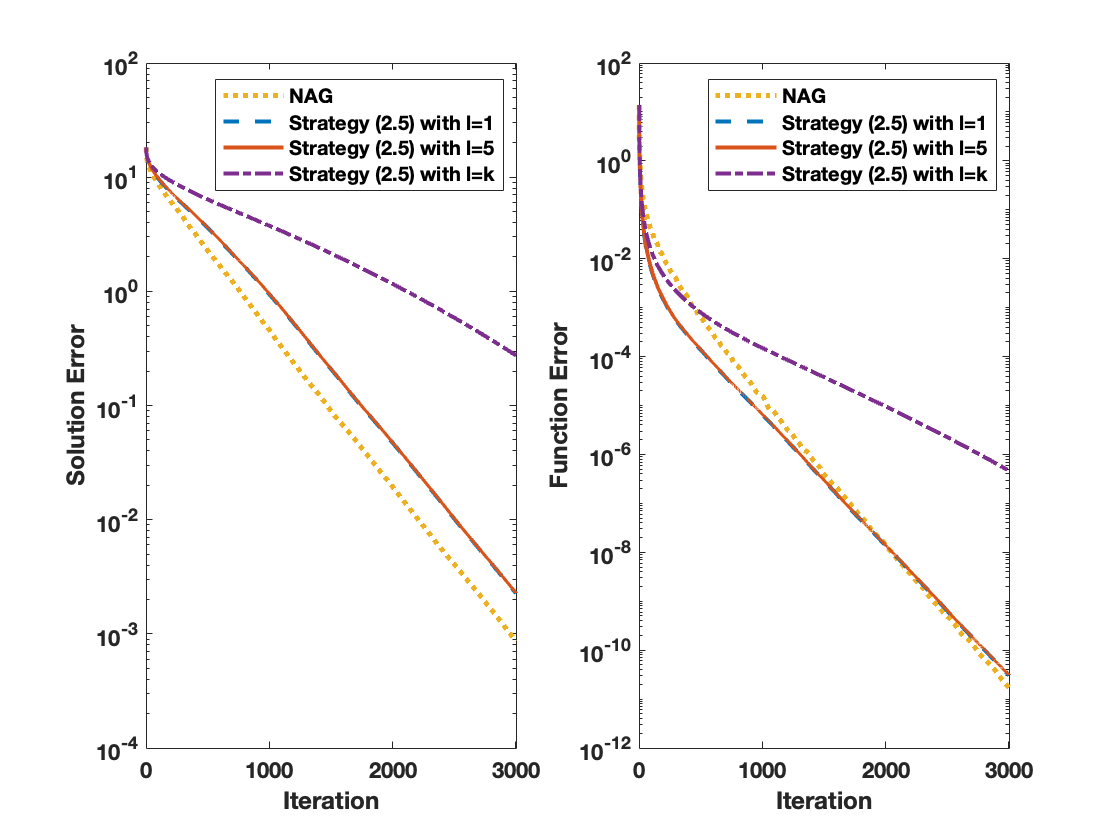}
    \end{minipage}%
    \hfill
    \begin{minipage}[t]{0.49\textwidth}
        \centering
        \includegraphics[width=\textwidth]{ 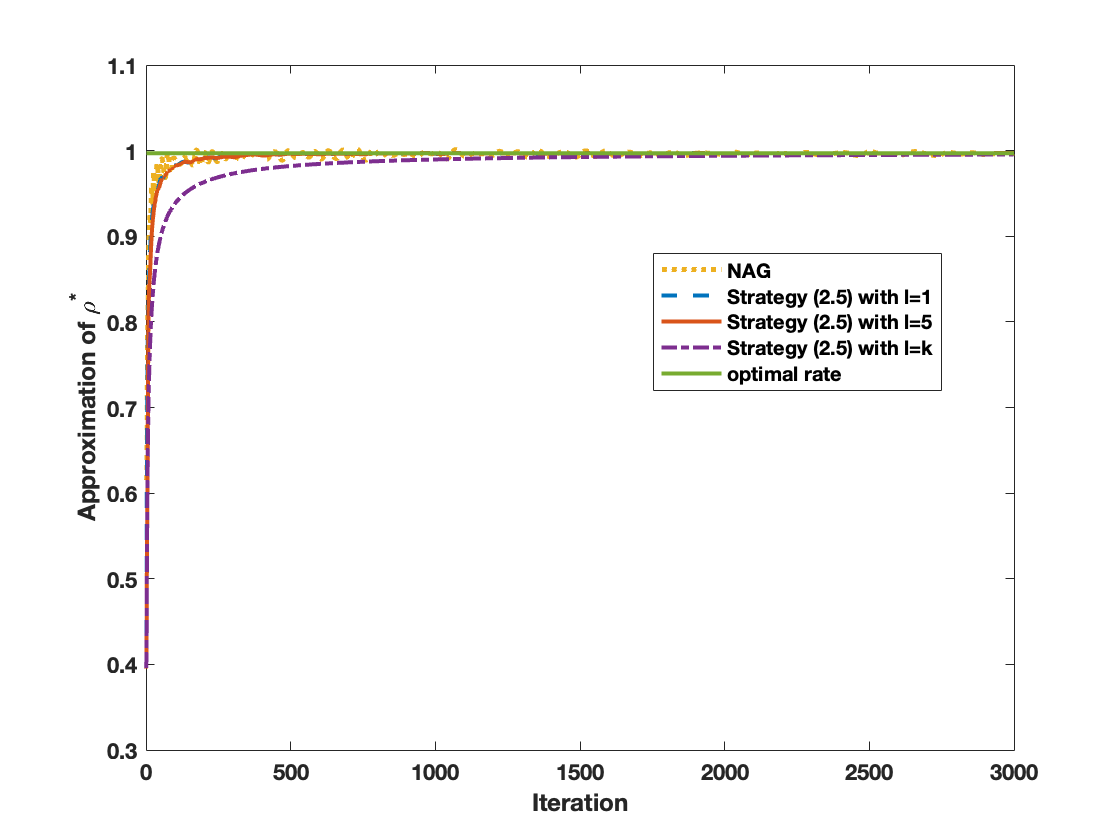}
    \end{minipage}
    \caption{Error (left) and estimated $\rho^*$ (right) for NAG on a diagonal matrix with log-spaced eigenvalue distribution ($n=1000$).}
    \label{fig:log_nag}
\end{figure}

\begin{figure}[htbp]
    \centering
    \begin{minipage}[t]{0.49\textwidth}
        \centering
        \includegraphics[width=\textwidth]{ 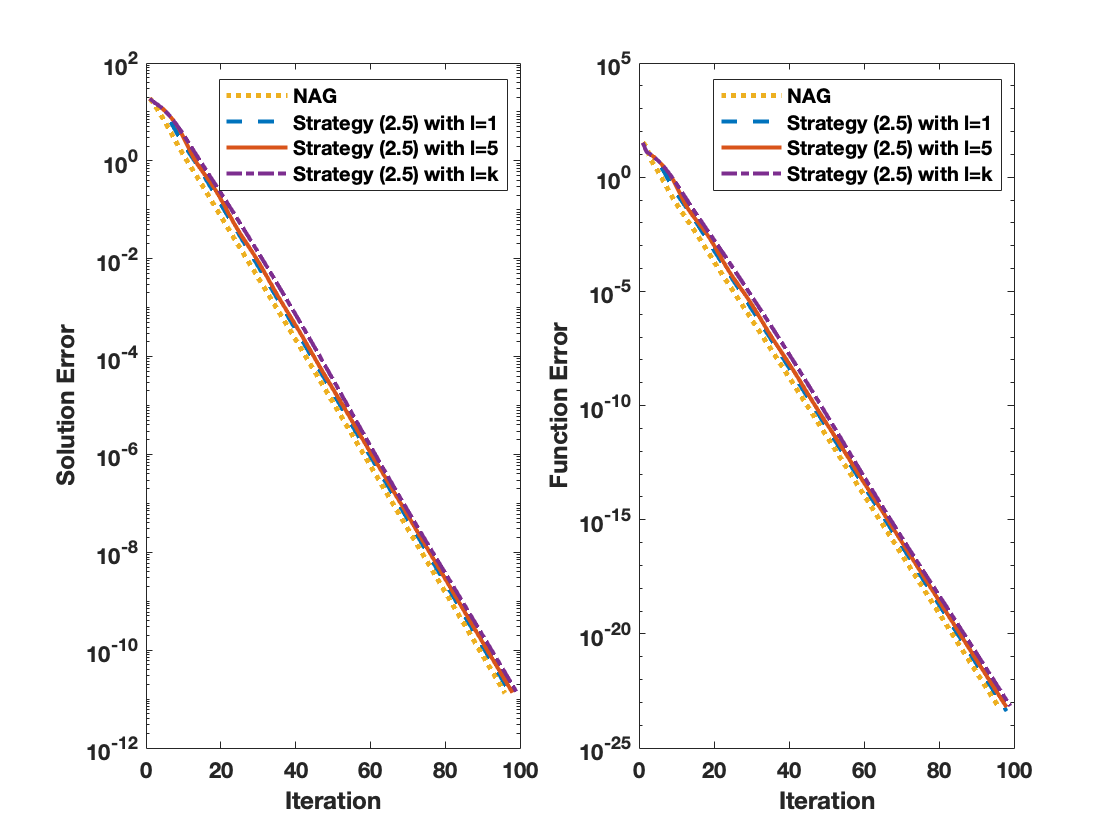}
    \end{minipage}%
    \hfill
    \begin{minipage}[t]{0.49\textwidth}
        \centering
        \includegraphics[width=\textwidth]{ 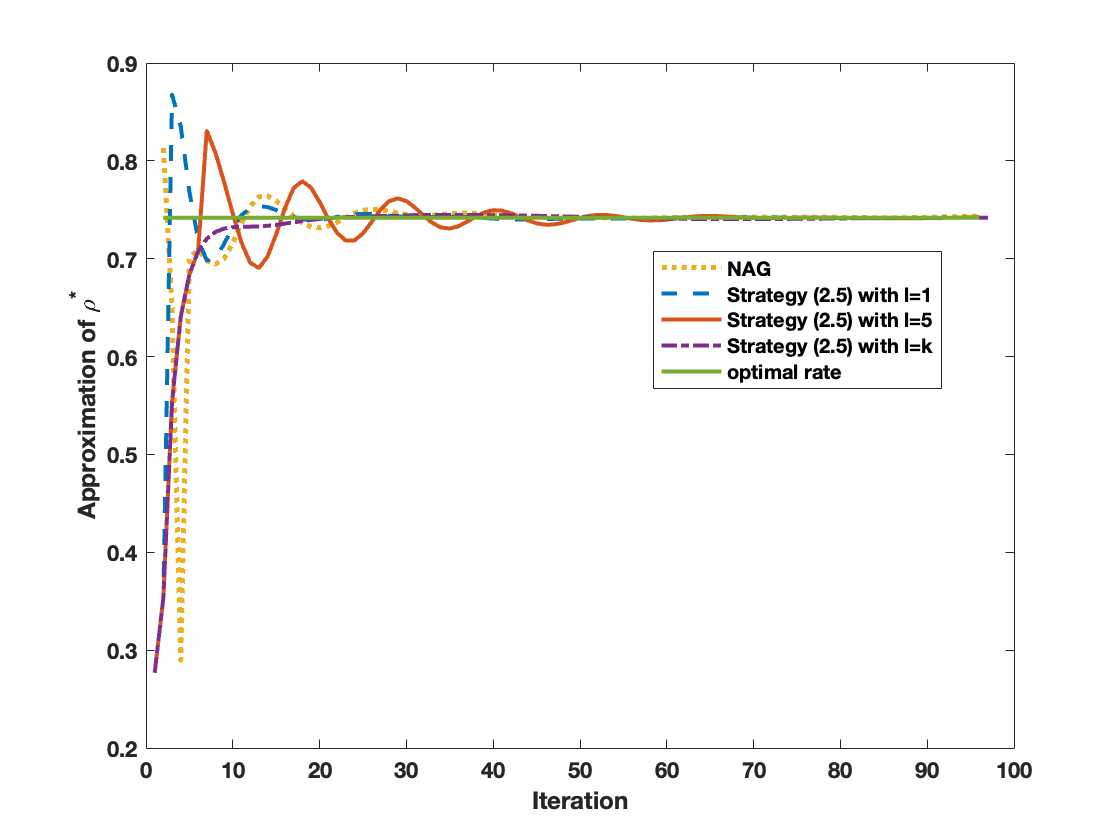}
    \end{minipage}
    \caption{Error (left) and estimated $\rho^*$ (right) for NAG on a diagonal matrix with clustered eigenvalue distribution ($n=1000$).}
    \label{fig:cluster_nag}
\end{figure}

The results of HB and the corresponding adaptive variants across various eigenvalue distributions are described in \Cref{fig:uniform_hb}-\ref{fig:cluster_hb}. Under the same number of iterations, both strategy \eqref{average_NAG} with $l=1$  and 5 achieve higher accuracy than the standard HB method for the uniform and log-spaced cases. Notably, for these two cases, the standard HB, as it is sensitive to parameter choices, shows some oscillation at the early stage. 
As shown in the clustered case, while strategy \eqref{average_NAG} with $l=1$  achieves higher accuracy, its estimation of $\rho_{\texttt{HB}}^*$ shows greater oscillations. Generally, strategy \eqref{average_NAG} with $l=5$ maintains both accuracy and stability, making it the most balanced choice among the HB variants.

\begin{figure}[htbp]
    \centering
    \begin{minipage}[t]{0.49\textwidth}
        \centering
        \includegraphics[width=\textwidth]{ 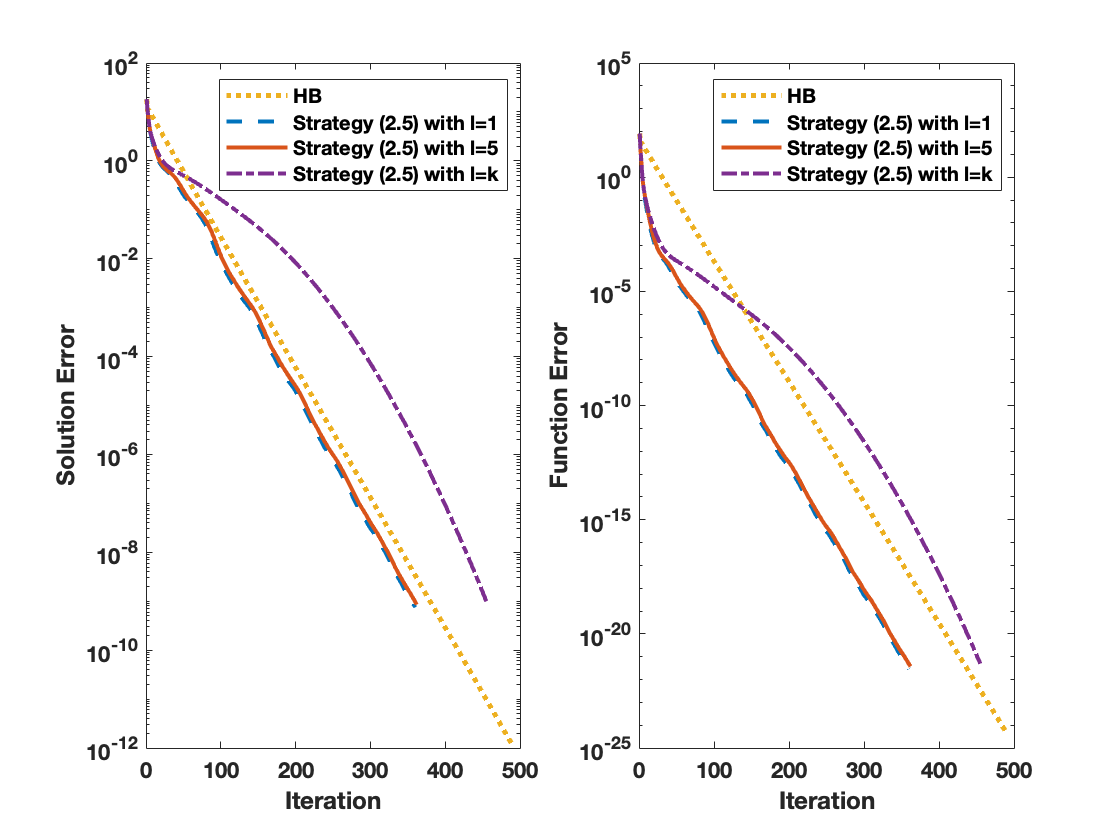}
    \end{minipage}%
    \hfill
    \begin{minipage}[t]{0.49\textwidth}
        \centering
        \includegraphics[width=\textwidth]{ 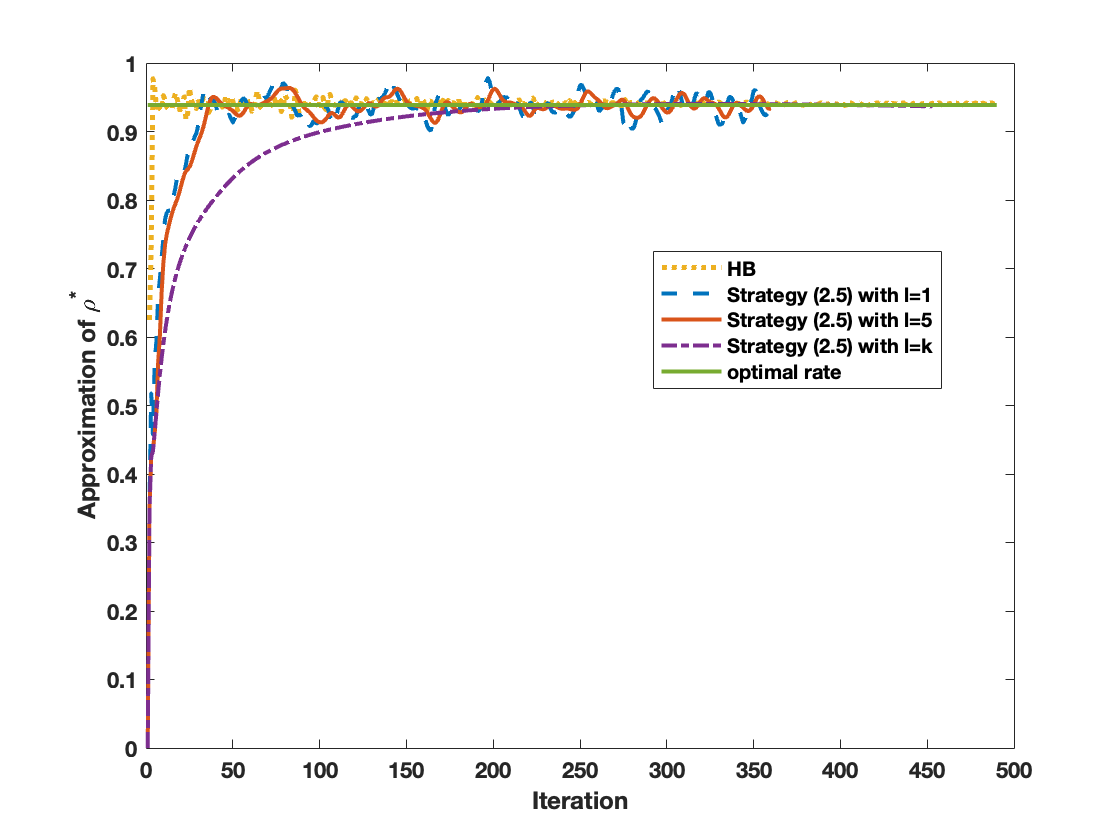}
    \end{minipage}
    \caption{Error (left) and estimated $\rho^*$ (right) for HB on a diagonal matrix with uniform eigenvalue distribution ($n=1000$).}
    \label{fig:uniform_hb}
\end{figure}
\begin{figure}[htbp]
    \centering
    \begin{minipage}[t]{0.49\textwidth}
        \centering
        \includegraphics[width=\textwidth]{ 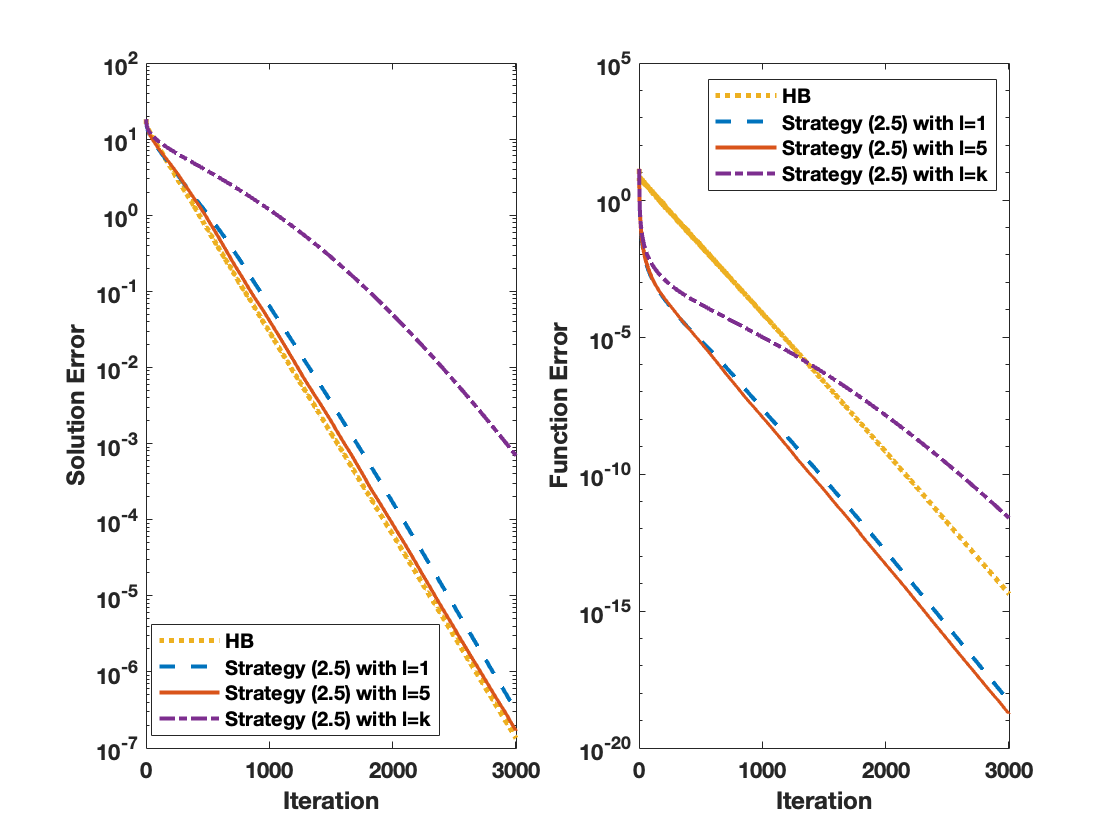}
    \end{minipage}%
    \hfill
    \begin{minipage}[t]{0.49\textwidth}
        \centering
        \includegraphics[width=\textwidth]{ 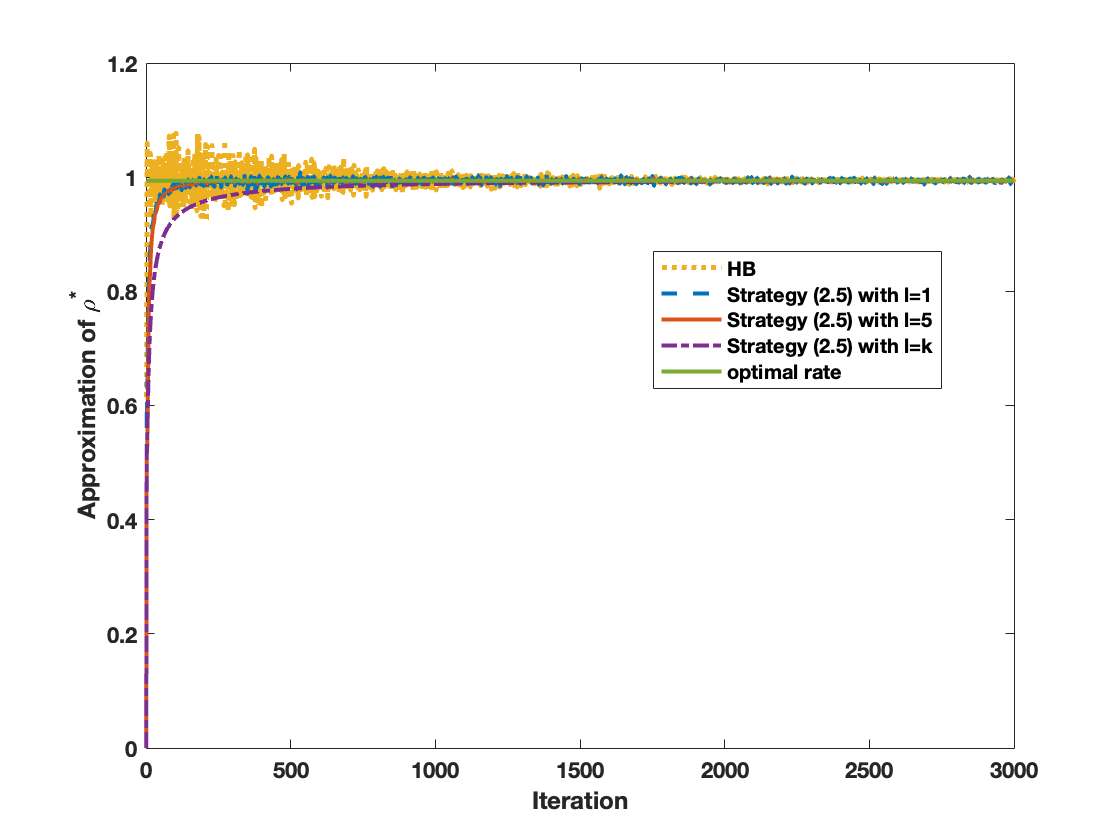}
    \end{minipage}
    \caption{Error (left) and estimated $\rho^*$ (right) for HB on a diagonal matrix with log-spaced eigenvalue distribution ($n=1000$).}
    \label{fig:log_hb}
\end{figure}

\begin{figure}[htbp]
    \centering
    \begin{minipage}[t]{0.49\textwidth}
        \centering
        \includegraphics[width=\textwidth]{ 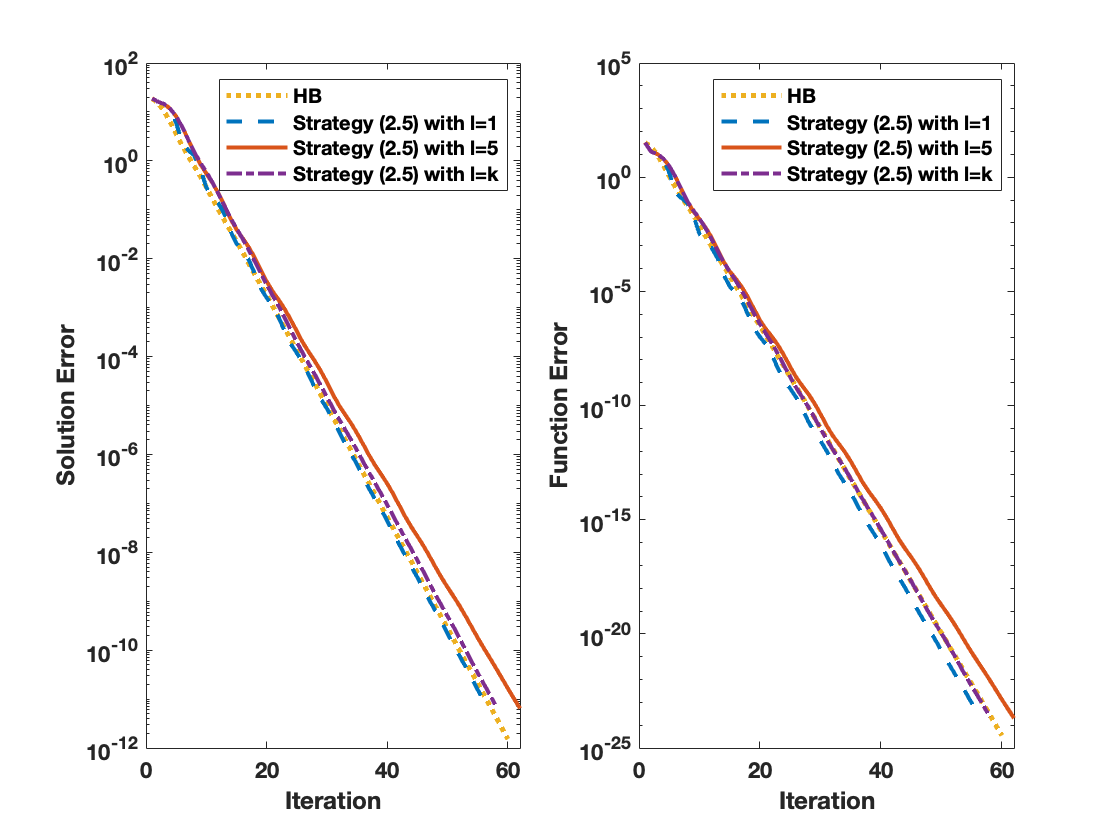}
    \end{minipage}%
    \hfill
    \begin{minipage}[t]{0.49\textwidth}
        \centering
        \includegraphics[width=\textwidth]{ 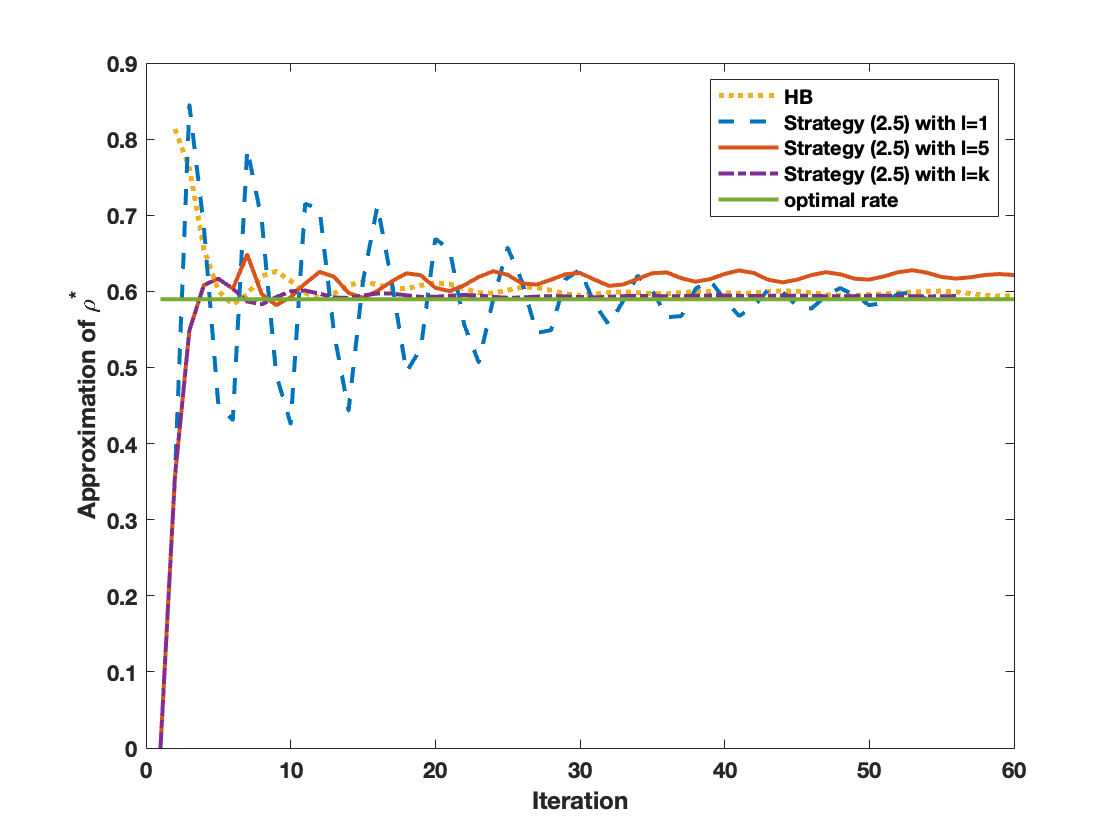}
    \end{minipage}
    \caption{Error (left) and estimated $\rho^*$ (right) for HB on a diagonal matrix with clustered eigenvalue distribution ($n=1000$).}
    \label{fig:cluster_hb}
\end{figure}





\subsection{Logistic regression problem with regularization} 
Logistic regression is a fundamental statistical method used for binary classification, with wide range applications in fields such as finance \cite{broby2022use,hasan2021does}, medical \cite{zabor2022logistic,schober2021logistic}, and marketing \cite{constantin2015using,burinskiene2007application}.  Consider a dataset consists of $p$ samples, each characterized by  $n$ features. The feature matrix  $A \in \mathbb{R}^{n \times p}$ is constructed such that each column represents a distinct sample. For each sample $i$, the response is binary, taking values $b_i\in \{-1,1\}$. We define the vector of all labels as $\bm{b} = [b_1, b_2, \dots, b_p]^T$. Logistic regression models the conditional probability of the label $b_i$ given the input $A_{:,i}$ (the $i$-th column of $A$) via:
$$P(b_i \, | \, A_{:,i}) = \mathrm{sigm}(b_i (\bm{x}^T A_{:,i})).$$
Here the sigmoid function is $\mathrm{sigm}(z):= \frac{1}{1+e^{-z}}$. 
To estimate the weight vector $\bm{x} \in \mathbb{R}^n$, we consider minimizing the negative log-likelihood: 
$$f(\bm{x}) = \frac{\xi}{2} \bm{x}^T \bm{x} + \sum_{i=1}^{p} \log(1 + e^{-b_i (\bm{x}^TA_{:,i})}),$$
where $\xi$ is the regularization parameter, ensuring that the optimization problem is strongly convex and preventing overfitting. The gradient and Hessian for $f(\bm{x})$ are given by:
\begin{align*}
\nabla f(\bm{x}) = \ & \xi \bm{x} + \sum_{i=1}^{p} \frac{e^{-b_i (\bm{x}^TA_{:,i})}}{1 + e^{-b_i (\bm{x}^TA_{:,i})}} (-b_i A_{:,i})\\
\nabla^2 f(\bm{x})
= \ & \xi I + \sum_{i=1}^{p} \mathrm{sigm}(b_i (\bm{x}^TA_{:,i}))(1 - \mathrm{sigm}(b_i (\bm{x}^TA_{:,i})) A_{:,i} A_{:,i}^T.
\end{align*}

Note that $0 < \mathrm{sigm}(z)(1-\mathrm{sigm}(z)) \leq \frac{1}{4}$ 
, so the eigenvalues of the Hessian matrix lie in the range $[\xi, \xi + \frac{1}{4} \lambda_{\max}(AA^T)]$. 
This motivates us to approximate the minimum eigenvalue and the maximum eigenvalue of the Hessian by $\tilde{\mu}=\xi$ and $\tilde{L} =  \xi + \frac{1}{4} \|AA^T\|_1$, respectively. 
Thus, the original algorithms apply the fixed $\tilde{\mu}$ and $\tilde{L}$. For the adaptive method, with $\tilde{L}$, the update rules are given by:
    \begin{align*}
    \mu_k =
    \begin{cases}
    \displaystyle \frac{1 - \rho_k}{1 + \rho_k} \tilde{L}, & \text{for GD} \\
    (1 - \rho_k)^2 \tilde{L}, & \text{for NAG} \\
    \left( \frac{1 - \rho_k}{1 + \rho_k} \right)^2 \tilde{L}, & \text{for HB}. 
    \end{cases}
    \end{align*}
In addition, for both NAG and HB, the first iteration is a GD step with a step size of $\alpha = \frac{2}{\tilde{\mu}+\tilde{L}}$.

In this test, we focus on the classical setting where $p>n$, often arising when data are abundant. Here, 
$A \in \mathbb{R}^{n \times p} $ is generated with entries sampled from the standard normal distribution, and the binary label vector $\bm{b} \in \{-1, 1\}^p$ is drawn from a Bernoulli distribution with $0 \mapsto -1$. We use $n = 500$, $p = 2000$, $\xi=0.1$ and $\bm{x}_0 = \bm{0}$. 
The stopping criterion for all tested algorithms is either a maximum of \texttt{maxIt} or iterations or a relative gradient norm $\|\nabla f(\bm{x}_k)\| / \|\nabla f(\bm{x}_0)\| < \texttt{tol}$ with \texttt{maxIt} = 350 and \texttt{tol} = $10^{-6}$. 
To get a reference solution $\bm{x}_{\text{ref}}$, we employ the NAG for convex objectives \cite{su2016differential} with a higher precision ($\texttt{maxIt}=10^5$, $\texttt{tol} = 10^{-12}$). 
The performance is then evaluated using the solution error $\|\bm{x}_k - \bm{x}_{\text{ref}}\|$, and the function error $f(\bm{x}_k) - f(\bm{x}_{\text{ref}})$.  


In \Cref{fig:logistic_gd}–\ref{fig:logistic_hb}, we show the results of GD, NAG, and HB and their adaptive variants. Since the optimal rate bound $\rho^*$ changes at each iteration, we estimate it dynamically for standard methods using the strategy \eqref{average_GD} and \eqref{average_NAG} with $l=1$, respectively. We observe that for GD, the adaptive strategies achieve similar results to  that of the original method with fixed $\tilde{L}$ and $\tilde{\mu}$ in both accuracy and iteration count. For NAG, strategy \eqref{average_NAG} with $l=k$ performs the worst. In contrast, strategy \eqref{average_NAG} with $l=1$ and 5  achieve accuracy of the same order of magnitude as NAG with fixed $\tilde{L}$ and $\tilde{\mu}$ but using fewer iterations.  This is because adaptive algorithms dynamically adapt to local curvature information of the objective.   On the other hand, here NAG only relies on fixed parameters derived from $\tilde{\mu}$ and $\tilde{L}$, which can be poor  estimations. 
A similar phenomenon is observed for HB, where adaptive variants show better results than the standard one.

\begin{figure}[htbp]
    \centering
    \begin{minipage}[t]{0.49\textwidth}
        \centering
        \includegraphics[width=\textwidth]{ 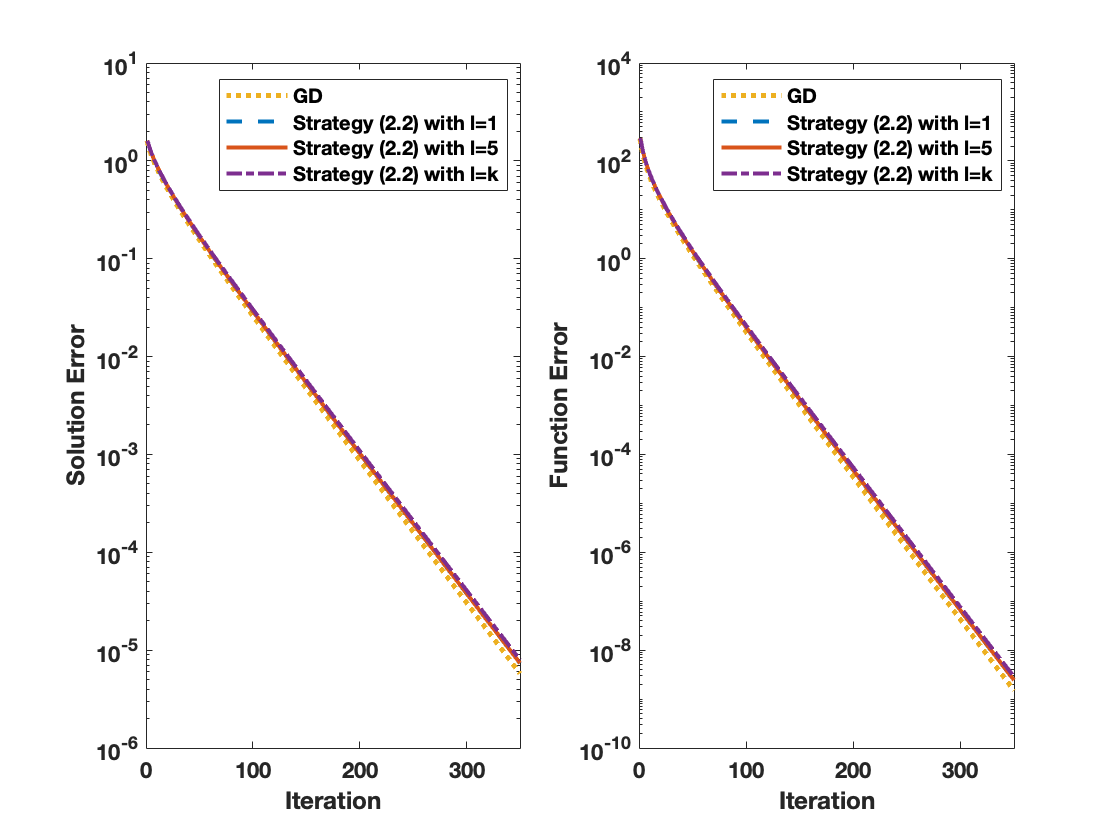}
    \end{minipage}%
    \hfill
    \begin{minipage}[t]{0.49\textwidth}
        \centering
        \includegraphics[width=\textwidth]{ 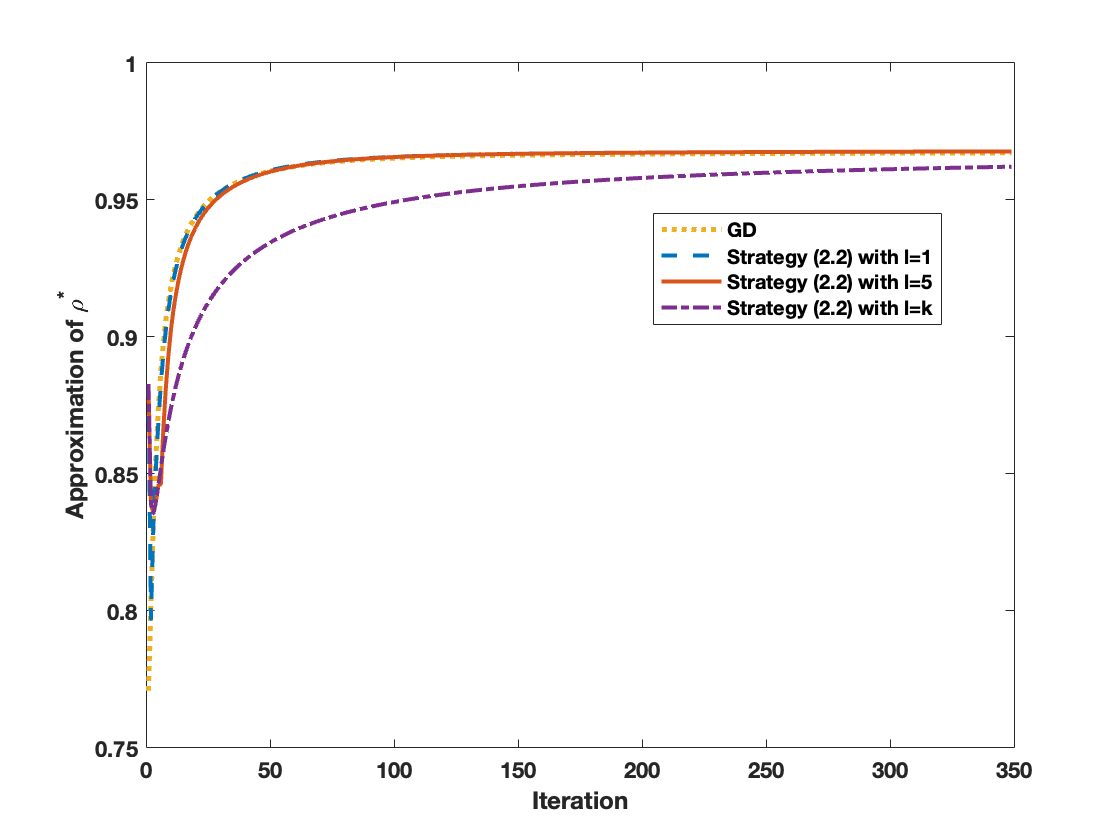}
    \end{minipage}
    \caption{Error (left) and estimated $\rho^*$ (right) for GD and its variants on the logistic regression problem.}
    \label{fig:logistic_gd}
\end{figure}

\begin{figure}[htbp]
    \centering
    \begin{minipage}[t]{0.49\textwidth}
        \centering
        \includegraphics[width=\textwidth]{ 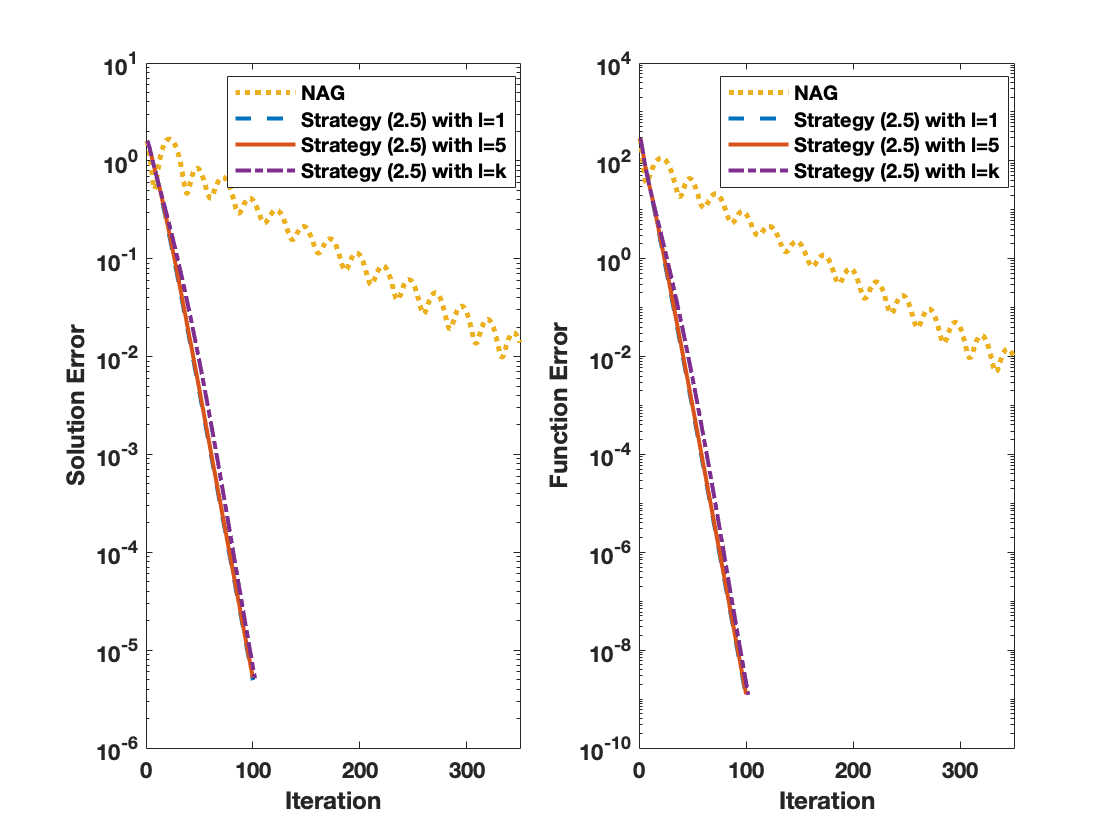}
    \end{minipage}%
    \hfill
    \begin{minipage}[t]{0.49\textwidth}
        \centering
        \includegraphics[width=\textwidth]{ 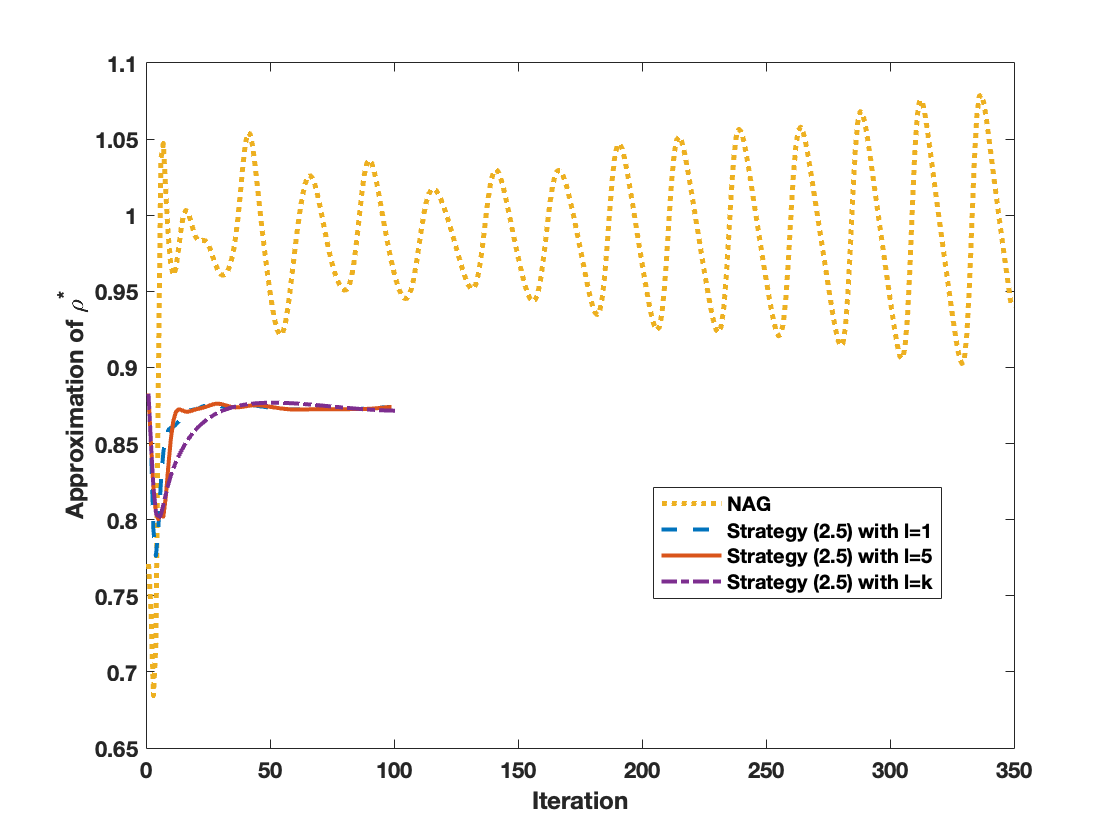}
    \end{minipage}
    \caption{Error (left) and estimated $\rho^*$ (right) for NAG and its variants on the logistic regression problem.}
    \label{fig:logistic_nag}
\end{figure}
 
\begin{figure}[htbp]
    \centering
    \begin{minipage}[t]{0.49\textwidth}
        \centering
        \includegraphics[width=\textwidth]{ 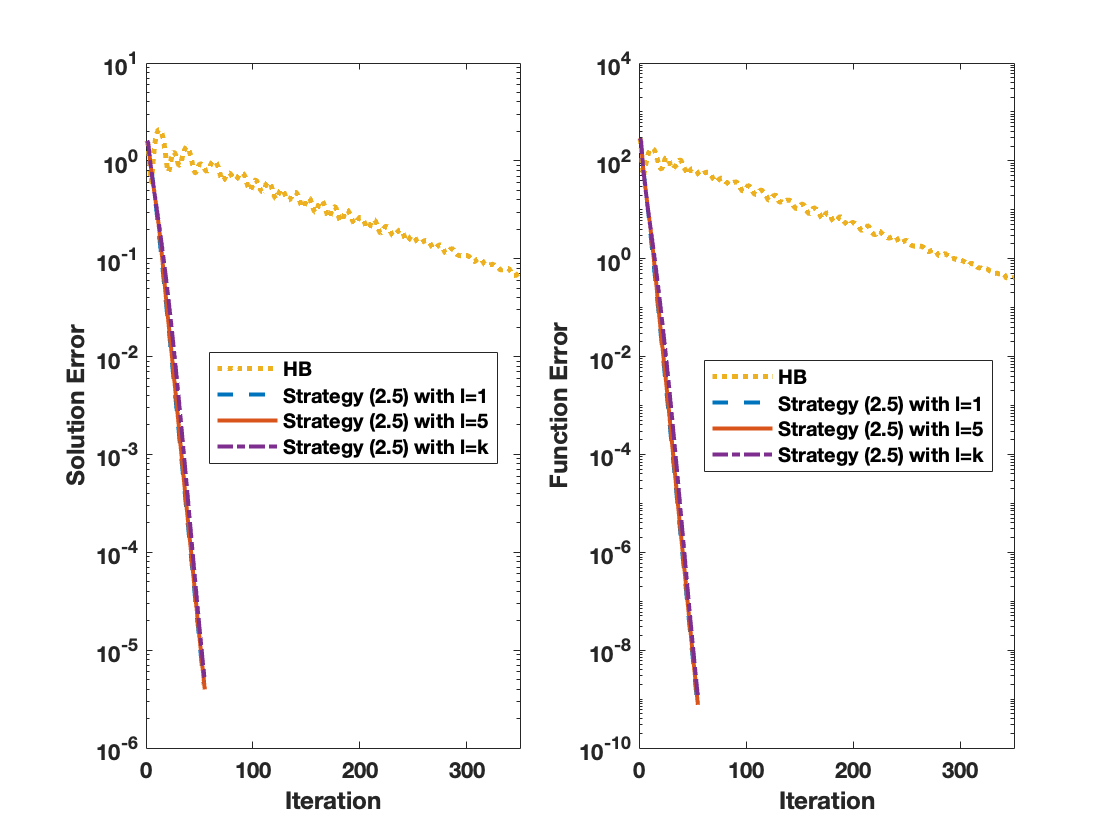}
    \end{minipage}%
    \hfill
    \begin{minipage}[t]{0.49\textwidth}
        \centering
        \includegraphics[width=\textwidth]{ 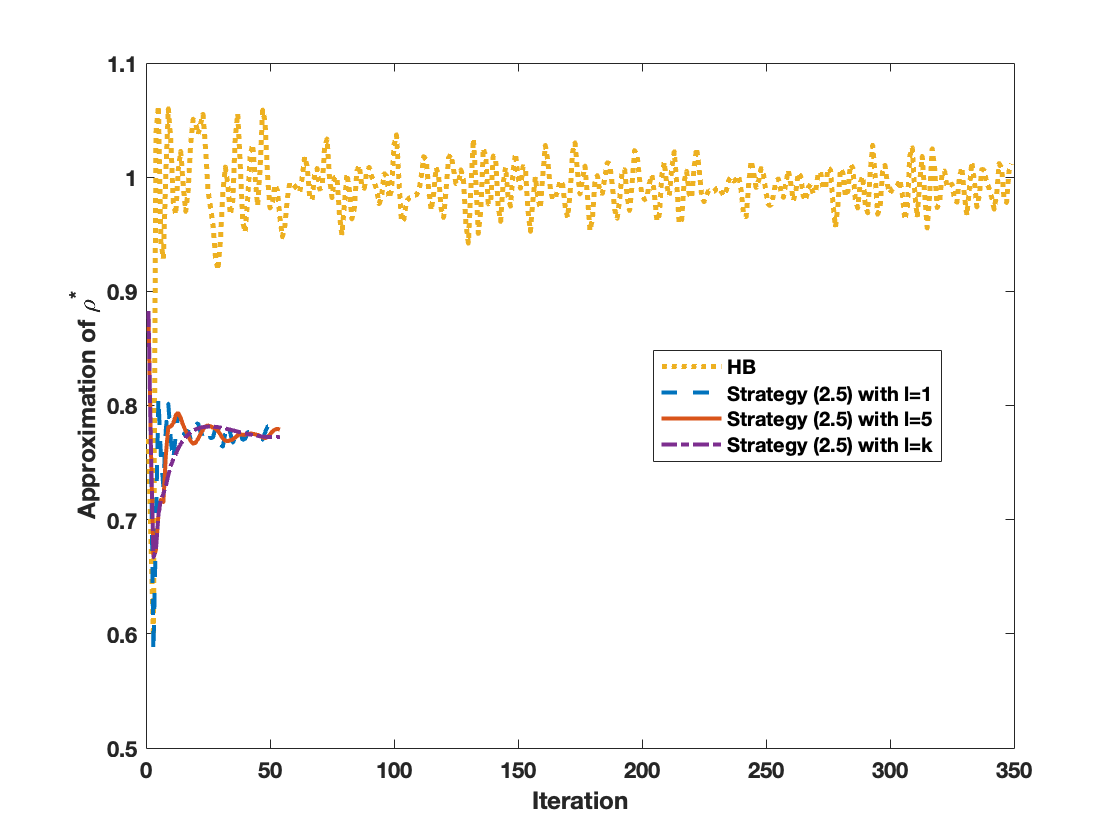}
    \end{minipage}
    \caption{Error (left) and estimated $\rho^*$ (right) for HB and its variants on the logistic regression problem.}
    \label{fig:logistic_hb}
\end{figure}

To further examine whether the observed behavior arises from inaccurate parameter choices or from the effectiveness of the adaptive algorithms, we use power iteration to estimate $L$, and in the original methods we use $L$ with shifted power iteration (with tolerance $10^{-6}$ and max 500 iterations) to estimate $\mu$ at each iteration. Then we arrive at the following results, as shown in \Cref{fig:logistic_gd_new}-\ref{fig:logistic_hb_new}. We observe that the adaptive algorithms and the original methods with estimated $L$ and $\mu$ 
exhibit similar performance. Compared to the previous tests with fixed $\tilde{L}$ and $\tilde{\mu}$, fewer iterations are required.  Nonetheless, as mentioned, estimating $L$ at each step increases the computational cost. 

\begin{figure}[htbp]
    \centering
    \begin{minipage}[t]{0.49\textwidth}
        \centering
        \includegraphics[width=\textwidth]{ 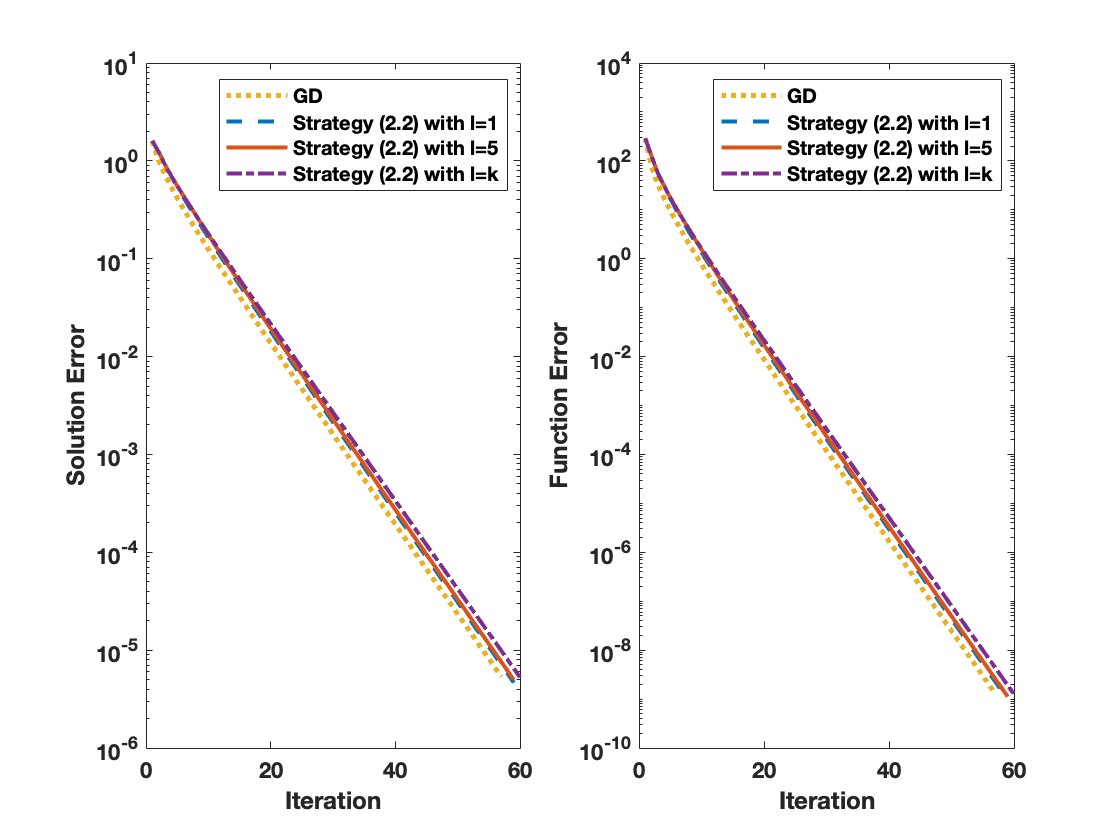}
    \end{minipage}%
    \hfill
    \begin{minipage}[t]{0.49\textwidth}
        \centering
        \includegraphics[width=\textwidth]{ 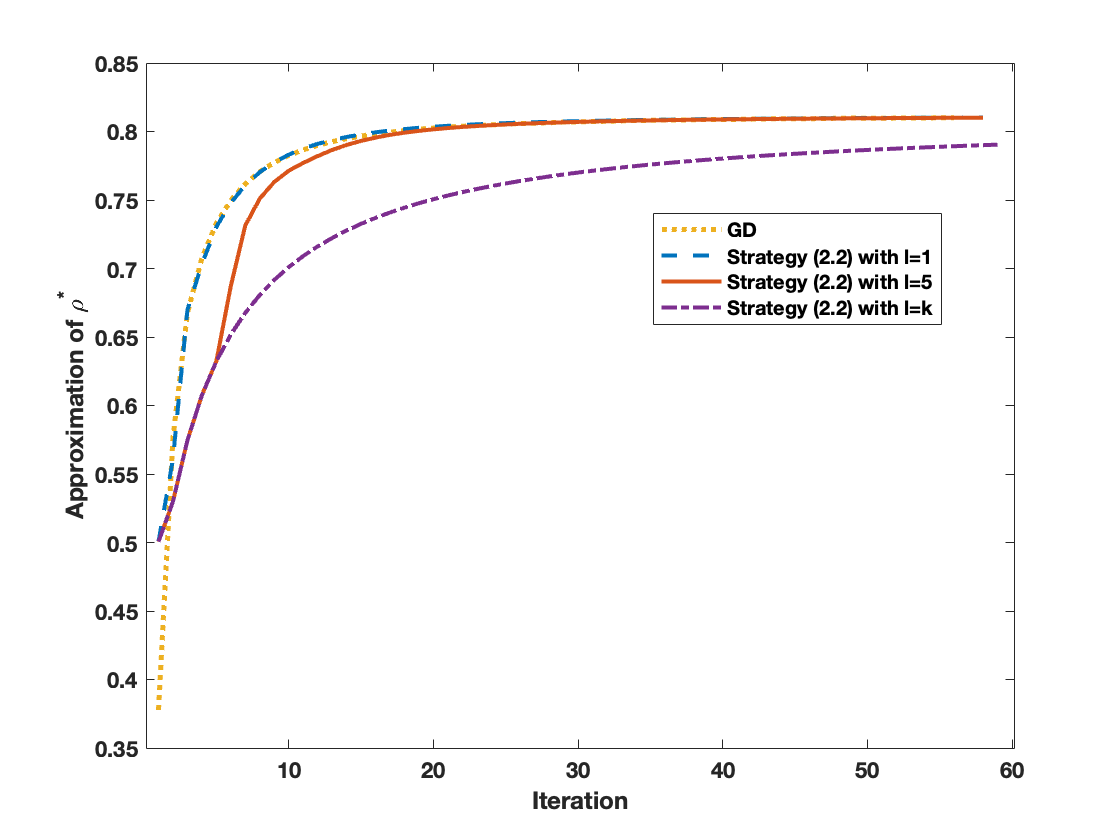}
    \end{minipage}
    \caption{Error (left) and estimated $\rho^*$ (right) for GD and its variants on the logistic regression problem.}
    \label{fig:logistic_gd_new}
\end{figure}

\begin{figure}[htbp]
    \centering
    \begin{minipage}[t]{0.49\textwidth}
        \centering
        \includegraphics[width=\textwidth]{ 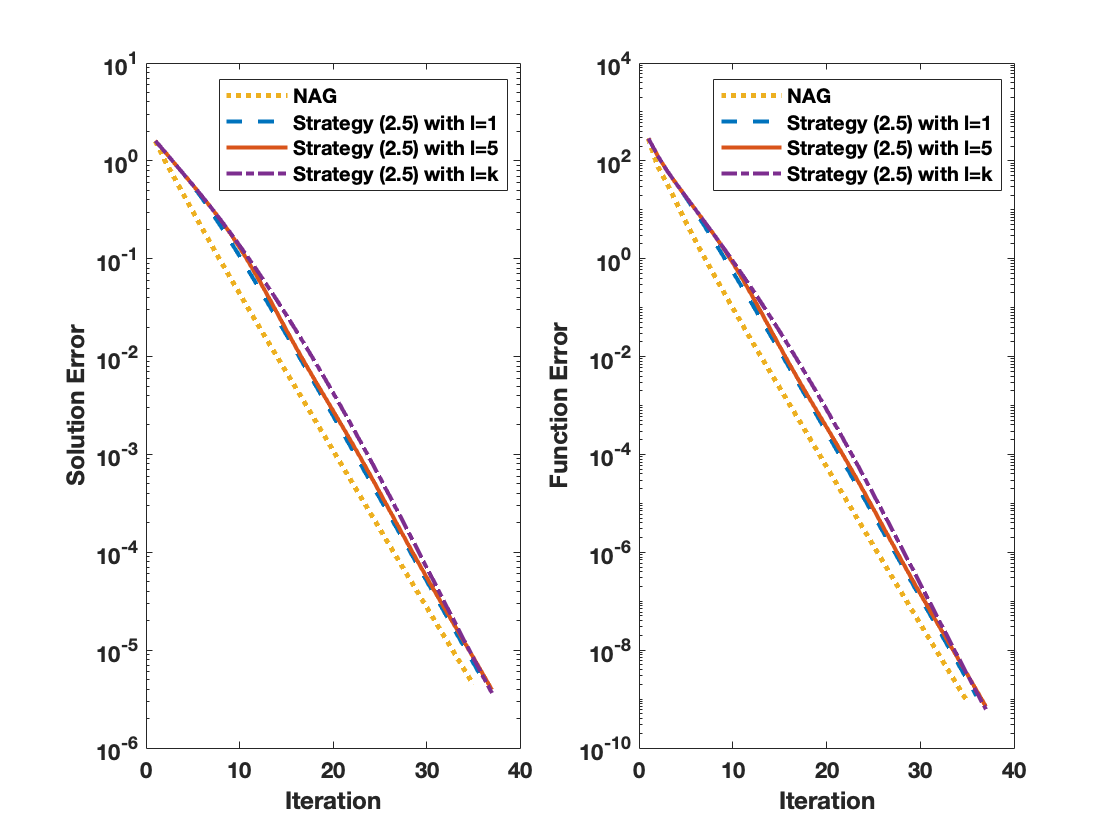}
    \end{minipage}%
    \hfill
    \begin{minipage}[t]{0.49\textwidth}
        \centering
        \includegraphics[width=\textwidth]{ 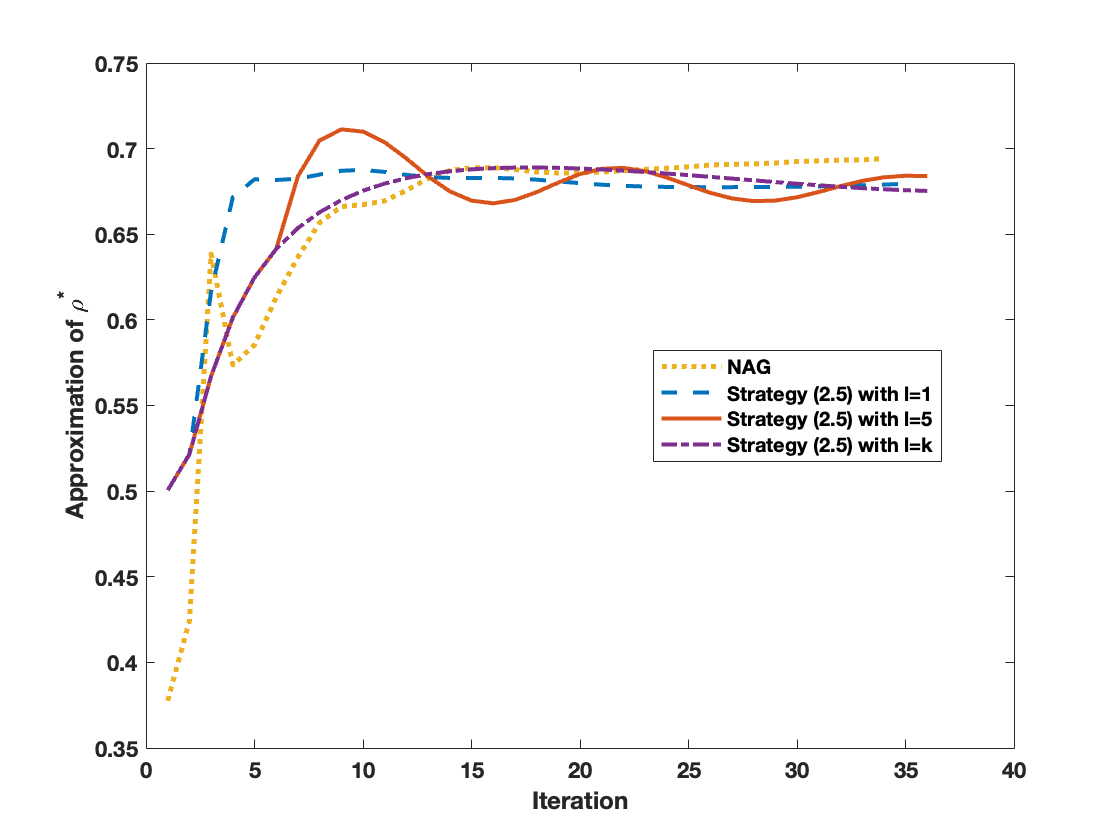}
    \end{minipage}
    \caption{Error (left) and estimated $\rho^*$ (right) for NAG and its variants on the logistic regression problem.}
    \label{fig:logistic_nag_new}
\end{figure}
 
\begin{figure}[htbp]
    \centering
    \begin{minipage}[t]{0.49\textwidth}
        \centering
        \includegraphics[width=\textwidth]{ 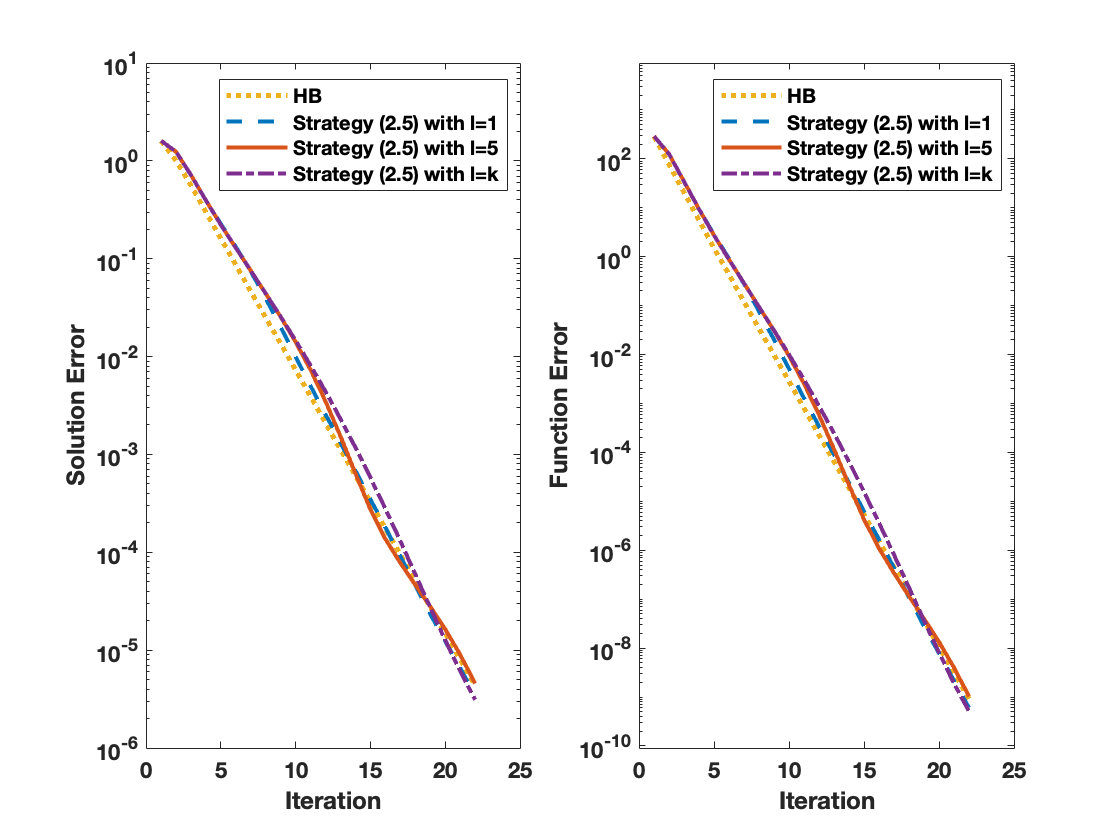}
    \end{minipage}%
    \hfill
    \begin{minipage}[t]{0.49\textwidth}
        \centering
        \includegraphics[width=\textwidth]{ 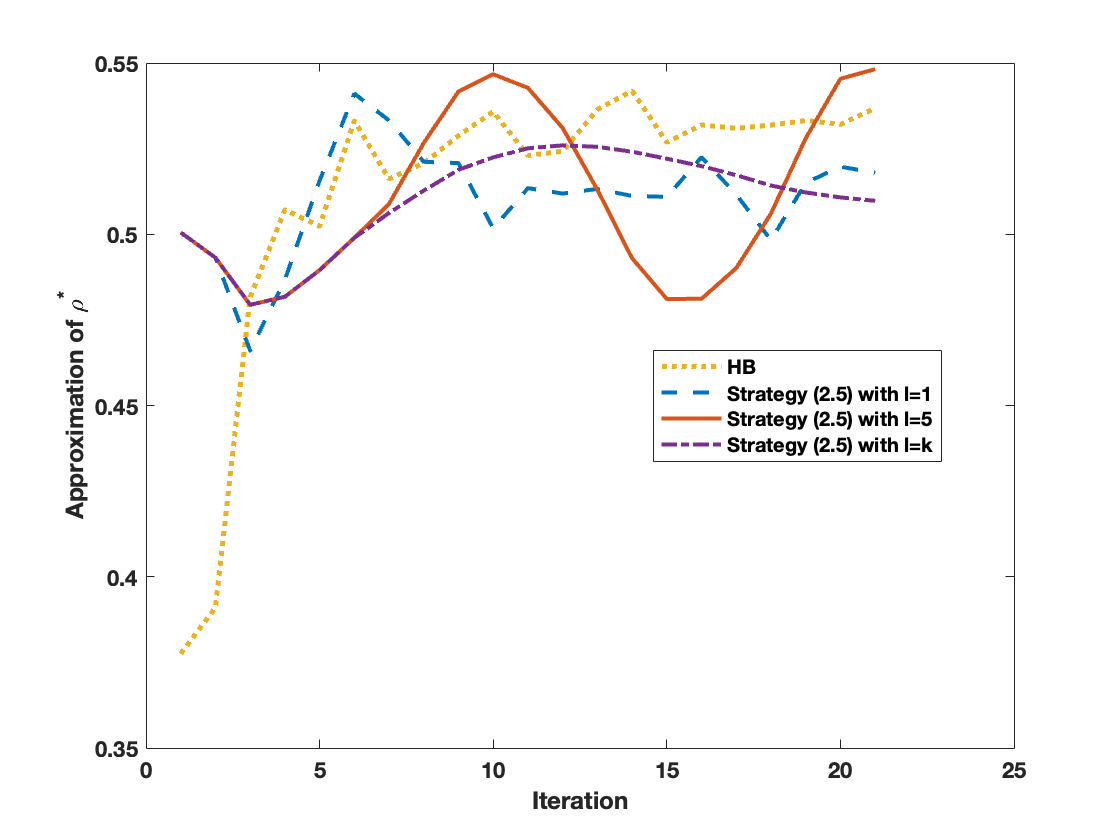}
    \end{minipage}
    \caption{Error (left) and estimated $\rho^*$ (right) for HB and its variants on the logistic regression problem.}
    \label{fig:logistic_hb_new}
\end{figure}

\subsection{Huber-TV regularized image denoising} Total Variation (TV) regularization \cite{rudin1992nonlinear} has been widely used in image processing \cite{chambolle2004algorithm,sidky2008image,lustig2007sparse}. The key idea is to promote piecewise constant images while preserving sharp edges. The classical TV model penalizes the $L^1$ norm of the image gradient, which leads to the undesirable staircasing effect \cite{nikolova2002minimizers}.  To address this, a common strategy is  replacing the non-smooth TV term with a Huber smoothing function:
\begin{align*}
h_\delta(t)=
\begin{cases}
\dfrac{t^{2}}{2\delta}, & \text{for } |t| \le \delta, \\[6pt]
|t| - \dfrac{\delta}{2}, & \text{for } |t| > \delta,
\end{cases}
\end{align*}
where $\delta>0$. Then, the  Huber-TV regularization is defined as $\int_\Omega h_\delta\big(|\nabla u(x)|\big)\,dx.$
This regularizer is often employed in image denoising \cite{chambolle2016introduction}. 
In this setting, one arrives at the following Huber-TV denoising model: 
\begin{align*}
\min_{u:\Omega \to \mathbb{R}} 
\; f(u)
=
\frac{\xi}{2}\int_{\Omega} (u(x) - u_0(x))^2 \, dx
+
\eta \int_{\Omega} h_\delta\!\left( |\nabla u(x)| \right) \, dx,
\end{align*}
where $u_0\in \mathbb{R}^{n \times m}$ is a given noisy image corrupted by Gaussian noise,  $\xi>0$ is the fidelity parameter, $\eta>0$ controls smoothness, and $|\nabla u(x)| = \sqrt{u_x(x)^2 + u_y(x)^2}$.  Then, the gradient of the objective can be written as
\begin{align*}
\nabla f(u)
=
\xi (u - u_0)
-
\eta\,\mathrm{div}\left( h'_{\delta}(|\nabla u|)\frac{\nabla u}{|\nabla u|} \right),
\end{align*}
For any $v \in H^1(\Omega)$, the Hessian of $f$ at $u$ applied to $v$
is given by 
\begin{align*}
\nabla^2 f(u)[v]
=\xi v-\eta\,\mathrm{div}\left(
\mathcal{A}(u)\nabla v
\right),
\end{align*}
where $\mathcal{A}(u)$ is defined  as
\begin{align*}
\mathcal{A}(u)
=
\begin{cases}
\dfrac{1}{\delta} I, & |\nabla u| \le \delta, \\[8pt]
\dfrac{1}{|\nabla u|}
\left(
I - \dfrac{\nabla u\,\nabla u^T}{|\nabla u|^2}
\right), & |\nabla u| > \delta.
\end{cases}
\end{align*}
Noting that when $|\nabla u| > \delta$, $I - \frac{\nabla u \nabla u^{\top}}{|\nabla u|^2}$ is an orthogonal projector and,  therefore, its $2$-norm equals $1$. Consequently, we have
$$
\left\| \frac{1}{|\nabla u|}
\left( I - \frac{\nabla u \nabla u^{\top}}{|\nabla u|^2} \right) \right\|
\le \frac{1}{|\nabla u|}
\le \frac{1}{\delta}.
$$
Hence, 
$\mathcal{A}(u)$ is uniformly bounded in the matrix 2-norm by $1/\delta$.

In the following tests, we consider an image $u_0 \in\mathbb{R}^{256\times256}$ corrupted by Gaussian noise with zero mean and variance $0.05^2$. We set $\xi = 4$, $\eta  = 0.06$ and $\delta  = 0.05$. 
The gradient $\nabla u$ is approximated using forward finite differences,
with Neumann boundary conditions. 
Thus the induced discrete Laplacian operator satisfies
$\|\Delta\|_{1}\le 8$.
Combining the above bounds, the strong convexity constant and the Lipschitz constant are estimated as $\tilde{\mu}=\xi$, $\tilde{L}=\xi+\frac{8\eta}{\delta}$, respectively. 



The results are summarized in   \Cref{fig:denosing_GD}-\ref{fig:denosing_HB}. 
We observe that the adaptive GD algorithms with strategy \eqref{average_GD} require a similar number of iterations as GD with fixed $\tilde L$ and $\tilde \mu$ to achieve comparable  objective decrease. This is because the estimated $L$ and $\mu$ remain close to the actual values across iterations. 
Moreover, the convergence factors $\rho^k$ for the adaptive algorithms are close to the one for  GD with fixed parameters. 
Similar results can also be observed for the adaptive NAG algorithms. For the HB method, using fixed parameters leads to fewer iterations under the same objective decrease. We also observe that, for the method with fixed parameters, the estimated convergence factor $\rho_k$ gradually approaches $(\tilde L-\tilde\mu)/(\tilde L+\tilde\mu)$.
However, for the adaptive algorithms, $\rho_k$ stabilizes around $0.7$. This behavior may result from the difference in initialization: the adaptive algorithms start with GD using step size $1/\tilde{L}$, while the fixed-parameter approach initializes GD with step size $1/(\tilde{L}+\tilde{\mu})$. This difference is further amplified by the parameter sensitivity of the HB method.

\begin{figure}[htbp]
    \centering
    \begin{minipage}[t]{0.49\textwidth}
        \centering
        \includegraphics[width=\textwidth]{ 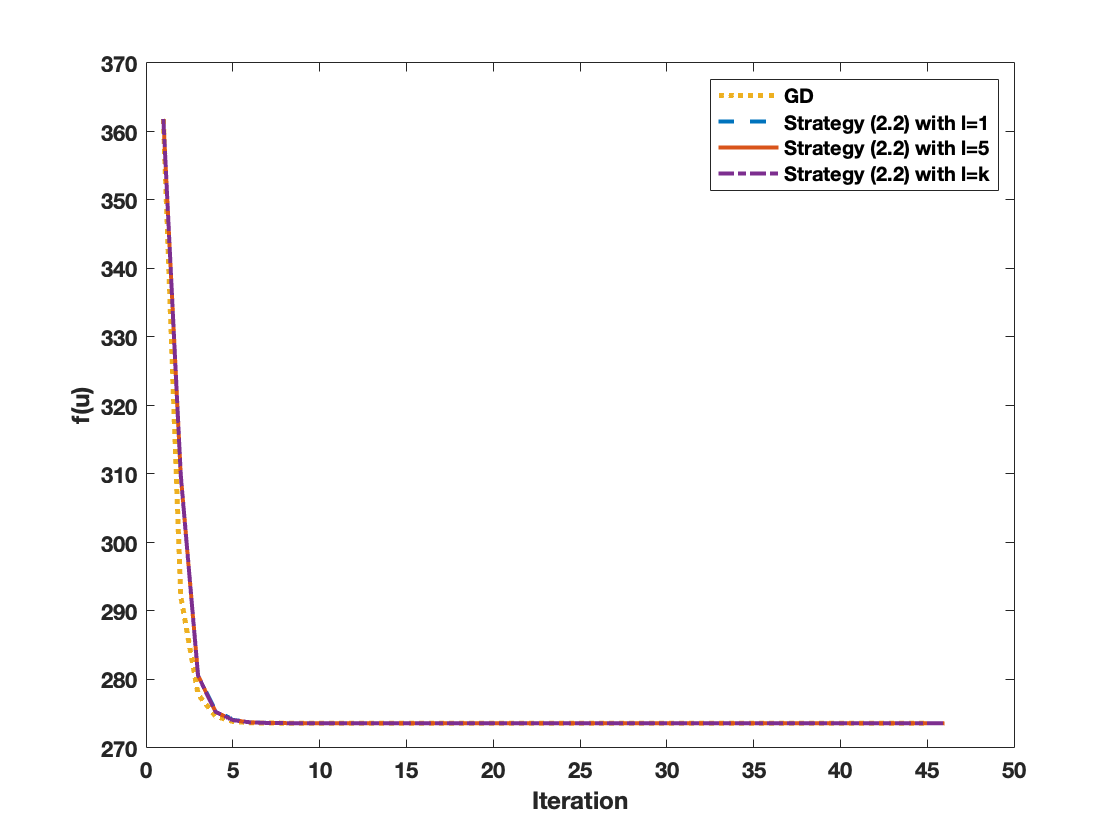}
    \end{minipage}%
    \hfill
    \begin{minipage}[t]{0.49\textwidth}
        \centering
        \includegraphics[width=\textwidth]{ 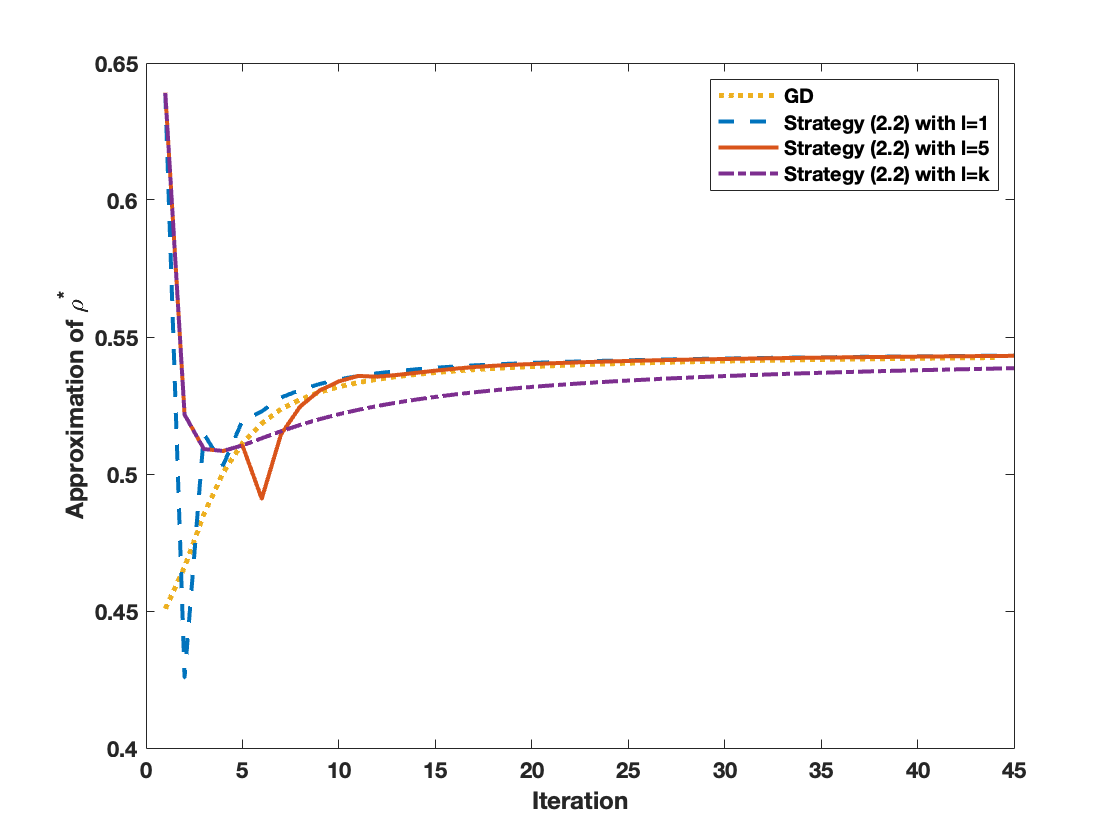}
    \end{minipage}
    \caption{Decay of the objective value (left) and estimated $\rho^*$ (right) for GD and its variants on the image denoising problem.}
    \label{fig:denosing_GD}
\end{figure}

\begin{figure}[htbp]
    \centering
    \begin{minipage}[t]{0.49\textwidth}
        \centering
        \includegraphics[width=\textwidth]{ 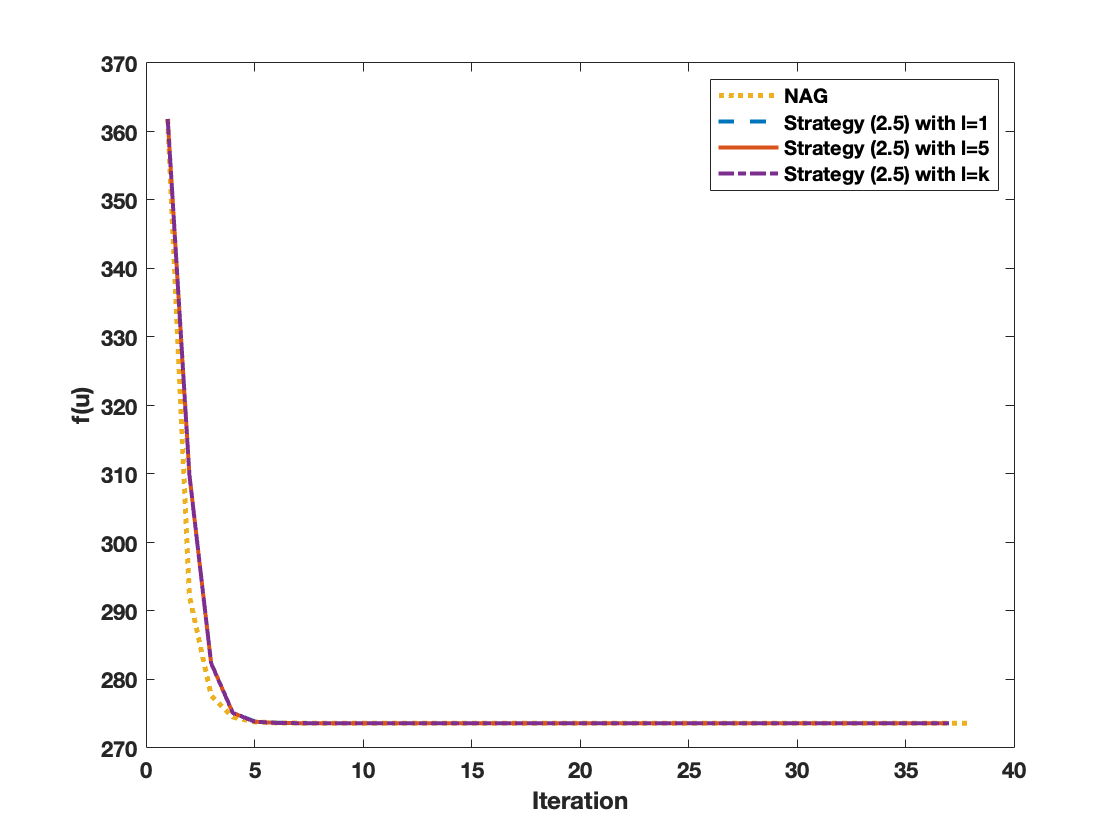}
    \end{minipage}%
    \hfill
    \begin{minipage}[t]{0.49\textwidth}
        \centering
        \includegraphics[width=\textwidth]{ 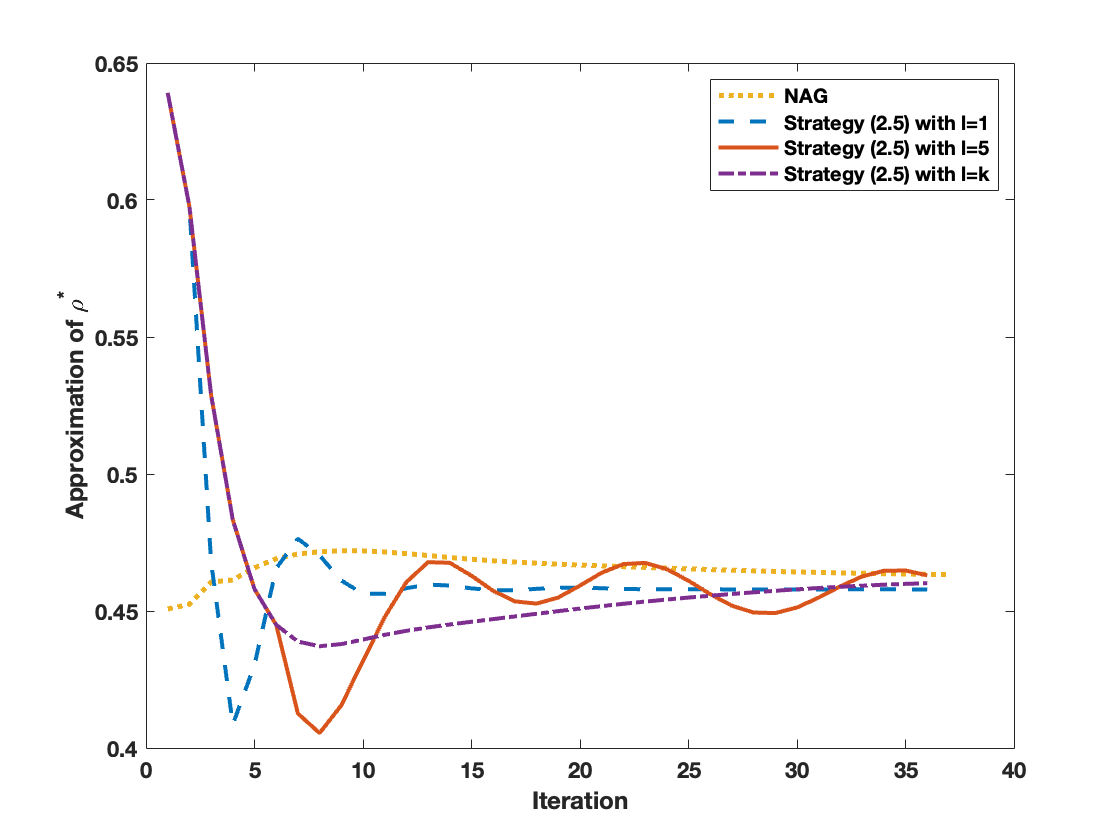}
    \end{minipage}
    \caption{Decay of the objective value (left) and estimated $\rho^*$ (right) for NAG and its variants on the image denoising  problem.}
    \label{fig:denosing_nag}
\end{figure}
 
\begin{figure}[htbp]
    \centering
    \begin{minipage}[t]{0.49\textwidth}
        \centering
        \includegraphics[width=\textwidth]{ 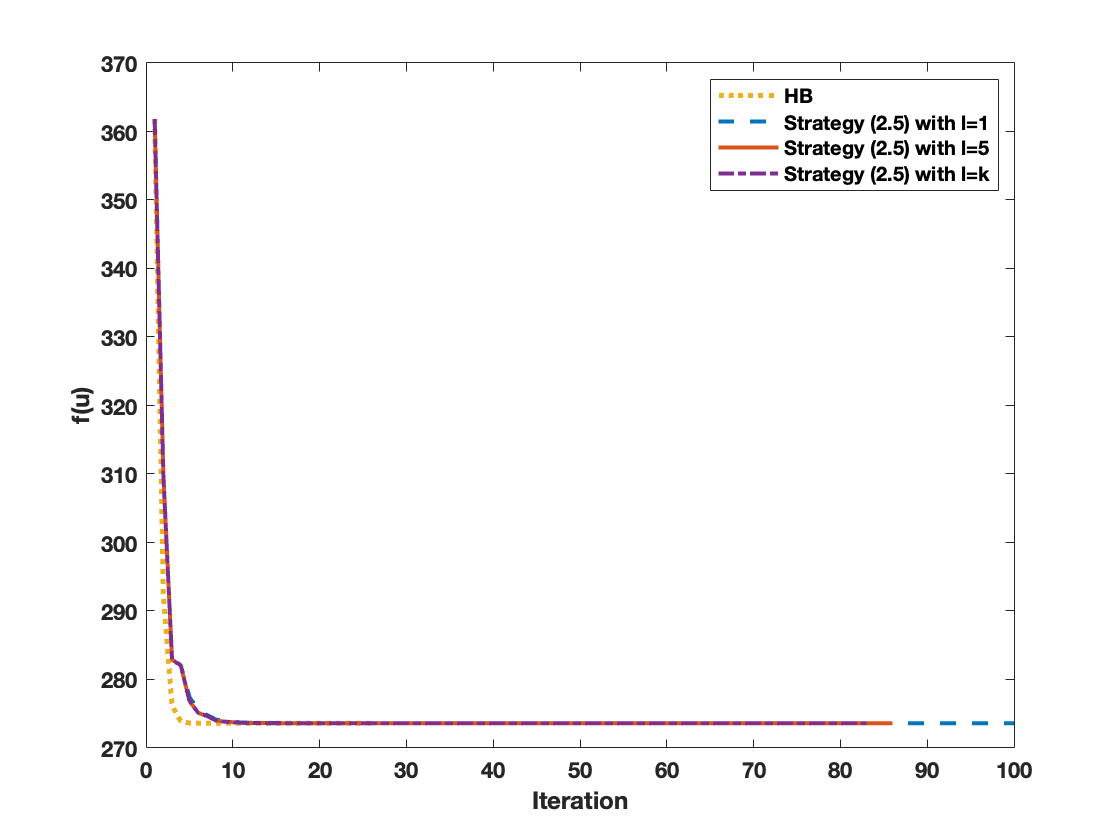}
    \end{minipage}%
    \hfill
    \begin{minipage}[t]{0.49\textwidth}
        \centering
        \includegraphics[width=\textwidth]{ 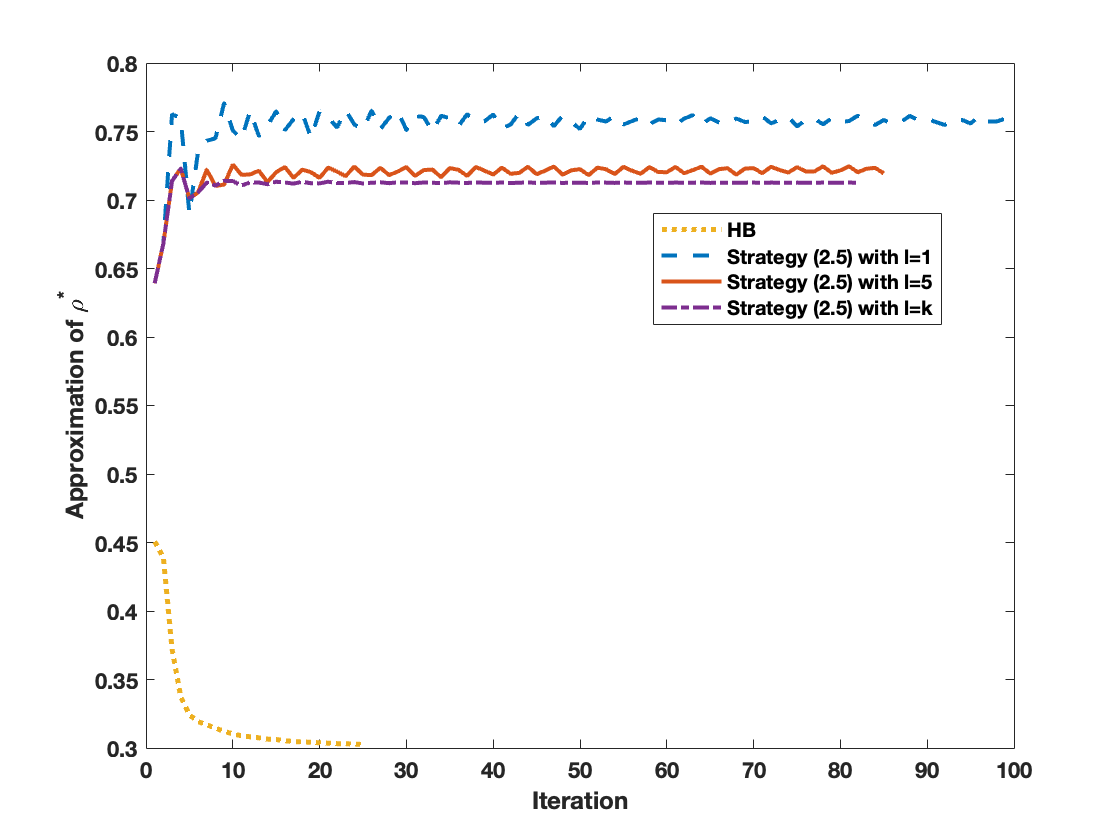}
    \end{minipage}
    \caption{Decay of the objective value (left) and estimated $\rho^*$ (right) for HB and its variants on the image denoising  problem.}
    \label{fig:denosing_HB}
\end{figure}

\section{Conclusion}\label{conclusion}
By utilizing the ratio of residual norms at two consecutive iterations as an empirical estimate of the upper bounds on convergence rates, we propose an adaptive framework for tuning the step size and momentum parameters in first-order methods for unconstrained quadratic optimization problems. We establish that the sequence of iterates generated by these adaptive gradient methods converges to $\bm{x}^*$ at a rate at least as favorable as that of GD with a step size of $1/L$. 

Numerical results on both quadratic and general strongly convex optimization problems demonstrate the effectiveness of our proposed methods, showing that the adaptive algorithms achieve efficiency comparable to their accelerated  counterparts with optimal parameters. However, as noted, a gap remains between the spectral radius and the matrix $2$-norm in our current analysis. To bridge this gap, a potential direction for future research involves utilizing Lyapunov functions \cite{wilson2021lyapunov,wibisono2016variational} to construct adaptive algorithms and provide the corresponding theoretical analysis.

\section*{Acknowledgments}
The authors wish to thank James H. Adler for many insightful and helpful discussions and suggestions.

\bibliographystyle{siamplain}
\bibliography{references}
\end{document}